\DeclareMathAlphabet{\mathpzc}{OT1}{pzc}{m}{it}
\renewcommand{\I}{\mathrm{\Romannum{1}}}
\renewcommand{\II}{\mathrm{\Romannum{2}}}
\newcommand{\setof}[1]{\left\{#1\right\}}
\newcommand{\suchthat}{\colon~}
\newcommand{\abs}[1]{\left | #1 \right |}
\newcommand*\concat[0]{{}^\smallfrown}
\newcommand{\sM}{\mathcal{M}}
\newcommand{\res}{\restriction}
\newcommand{\bG}{\boldsymbol{\Gamma}}
\newcommand{\bS}{\boldsymbol{\Sigma}}
\newcommand{\bP}{\boldsymbol{\Pi}}
\newcommand{\bD}{\boldsymbol{\Delta}}
\newcommand{\zfc}{\mathsf{ZFC}}
\newcommand{\ad}{\mathsf{AD}}
\newcommand{\dom}{\text{dom}}
\begin{document}
\title{The Measure Game}
\authorlogan
\authorlior
\authorsteve
\authorhouston
\authordavid

\
\begin{abstract}
We study a game first introduced by Martin \cite{Martin1998}
(actually we use a slight variation of this game)
which plays a role for measure analogous to the Banach-Mazur game for category.
We first present proofs for  the basic connections between this game and measure, and then
use the game to prove fundamental measure theoretic results such as
Fubini's theorem, the Borel-Cantelli lemma, and a general unfolding result for the game
which gives, for example, the measurability of $\bS^1_1$ sets.
We also use the game
to give a new, more constructive, proof of a strong form of the R\'{e}nyi-Lamperti lemma,
an important result in probability theory
with many applications to number theory. The proofs we give are all direct combinatorial
arguments using the game, and do not depend on known measure theoretic arguments.
\end{abstract}

\maketitle

\section{Introduction}
Measure and category are two of the fundamental notions of analysis, topology,
and descriptive set theory. In the case of category there is a {\textsl{natural}}
game (this notion is defined below), the Banach-Mazur game (see \cite{Oxtoby}), associated to the
topological category of a set. Many important properties of 
category (e.g.\ the Kuratowski-Ulam theorem, unfolding arguments)
have direct proofs in terms of this game. 

In this paper we study a natural game corresponding to measure introduced
by Martin (c.f.\ Theorems 8 and 9 of
\cite{Martin1998};
see also the discussion in Rosendal \cite{rosendal}).
We use this game to directly prove the main results about measure analogous
to the above mentioned facts for category, and obtain new results such as a
different, more constructive, proof of a strong form of the
R\'{e}nyi-Lamperti theorem from probability theory. 
Our results here are given in the case of Borel probability measures on Polish spaces,
and have the advantage of being ``constructive'' in the sense that we are able to
explicitly exhibit strategies in the game, which in turn can be used to construct points in the measure space.
This of course cannot be done in complete generality without some strong hypotheses on the space.

In \S\ref{sec:basic} we establish the basic properties of the game
and relate them to measurability. In \S\ref{sec:bc} we use the game to
prove the classical Borel-Cantelli theorem.
We then use the game to
prove a significantly stronger result, a strong form of the R\'{e}nyi-Lamperti theorem
(due to \cite{kochen-stone} and \cite{spitzer}, see also \cite{petrov}).
The latter result has found numerous applications in
number theory (see for example \cite{harmon}).
Here the assumption of mutual independence
of the events is replaced by a weaker requirement. In \S\ref{sec:fubini} we use the game to give a direct proof
of Fubini's theorem (the measure analog of the Kuratowski-Ulam theorem). 
In \S\ref{sec:unfolding} we use the game to
give a combinatorial proof of an unfolding result for the measure game.
This has several consequences such as showing directly that $\bS^1_1$ sets
are universally measurable, showing $\bP^1_n$-determinacy gives the
measurability of $\bS^1_{n+1}$ sets, and proving  continuous uniformization
results for measure. 

We note that \S \ref{sec:bc}, \S\ref{sec:fubini}, and \S\ref{sec:unfolding} are all independent and those more interested in the measure-theoretic applications of the game can focus on \S\ref{sec:bc} and \S\ref{sec:fubini}, while those interested in descriptive set theoretic applications could skip directly to \S \ref{sec:unfolding}.  The material in \S\ref{sec:basic} establishes the basic properties of the game for the sake of completeness, and while the results therein are important and are used throughout the paper, their proofs are relatively straightforward.

Now we establish notation.  Let $\omega=\{0,1,\dots\}$ denote the set of natural numbers. Let $2^\omega$ denote the space of infinite binary sequences with the product of discrete topologies on $2=\{ 0,1\}$.  We note that $2^\omega$ is homeomorphic to the Cantor middle-thirds sets in $\R$. We also let $\omega^\omega$ denote the space of infinite sequences of natural numbers, (the {\em Baire space}), which is homeomorphic to the space of irrationals. A subset $T \subseteq X^{<\omega}$ is a \emph{tree} if it is closed under taking initial segments.  We let $[T]=\{u \in X^\omega \colon~\text{for all}~x \subset u, x \in T\}$ denote the set of branches through $T$.

By a {\em game} on a set $X$ with {\em payoff set} $A\subseteq X^\omega$ we mean a two-player game where 
$\I$ makes moves $x_{2i}\in X$ and $\II$ makes moves $x_{2i+1}\in X$ as shown:

\begin{figure}[ht]
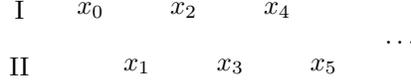

\begin{games}[player spacing=0.25cm]
\game[dots]{ {$x_0$} {$x_1$} {$x_2$} {$x_3$} {$x_4$} {$x_5$}}
\end{games}
\caption{A general game on a set $X$} \label{fig:gg}
\end{figure}
\noindent $\I$ wins the run of the game iff $x=(x_0,x_1,\dots) \in
A$. When convenient, we will also define the game by the winning
condition for player $\II$. Often the exact payoff set is defined
implicitly by giving a list of ``rules'' the players must follow (the
first player to violate one of the rules loses) and then defining the
payoff set for runs of the game that follow the rules.

The axiom $\ad_X$ is the assertion
that all games on $X$, for any payoff set $A\subseteq X^\omega$, are
determined, that is, one of the players has a winning strategy
(defined in the obvious way).  $\ad$, the axiom of determinacy, is the
axiom $\ad_\omega$. Our main game is equivalent to an integer
game, and so is determined from $\ad$. 
$\ad$ contradicts the axiom of  choice but
does not contradict weaker forms of choice such as countable and
dependent choice, and is a reasonable axiom within the realm of
definable sets.

By a {\em pointclass} $\bG$ we
mean a collection of subsets of Polish spaces which is closed under
inverse images by continuous functions $f \colon X\to Y$ (i.e., if
$A\subseteq Y$ is in $\bG$, then $B=f^{-1}(A)$ is in $\bG$).  Examples
include all the levels $\bS^0_\alpha$, $\bP^0_\alpha$, $\bD^0_\alpha$
of the Borel hierarchy as well as $\bS^1_1$ (the analytic sets),
$\bP^1_1$ (the co-analytic sets), and most naturally occurring
collections of definable sets.  A fundamental result of Martin
\cite{Martin_determinacy} says that all Borel games (on any set $X$)
are determined (Borel refers to the product of the discrete topology
on $X$).  The determinacy of $\bS^1_1$ games, however, is not a
theorem of $\zfc$. We let $\det(\bG)$ denote the statement that
all games on $\omega$ with payoff set $A \in \bG$ are determined.

In the case of category, for any pointclass $\bG$ the determinacy of the Banach-Mazur games for sets
in the pointclass $\bG$ is equivalent to the Baire property of sets in $\bG$.
The Banach-Mazur game also permits an ``unfolding,'' an important property which has
several consequences. In particular, the Banach-Mazur game for $\bS^1_1$ sets unfolds
to a closed game, which is therefore determined. This shows in $\zfc$ that every
$\bS^1_1$ set has the Baire property. A related fact is the Kuratowski-Ulam theorem
for category, which is the analog of the Fubini theorem for measure. This theorem can
also be proved using the Banach-Mazur game. Thus we have a natural game associated
to the notion of category which can be used to established the main results
about this notion. We now introduce a game, the measure game, which we show
plays an analogous role for measure.

If $X$ is an uncountable  Polish space then it is well-known that there is a
Borel isomorphism $\pi$ between 
$X$ and $2^\omega$. If $\nu$ is a Borel probability measure on $X$, then $\mu=\pi(\nu)$ 
is a Borel probability measure on $2^\omega$. Thus, for all arguments it suffices to consider 
the case $X=2^\omega$, and $\mu$ a Borel probability measure on $2^\omega$. It is also possible 
to define versions of the game directly on arbitrary Polish spaces.

So let $\mu$ be a fixed Borel probability measure on $2^\omega$ and let $\mu^*$, $\mu_*$ denote the corresponding outer and inner measure respectively, i.e.
\[\mu^*(A)=\inf\setof{\mu(U) \suchthat U~\text{is open},~ A \subseteq U}\]
\[\mu_*(A)=\sup\setof{\mu(F) \suchthat F~\text{is closed},~ F \subseteq A}.\]
Let $A\subseteq 2^\omega$ and $s\in [0,1)$.
For a finite sequence $t \in 2^{<\omega}$, let $N_t$ denote the set $\setof{x \in 2^\omega \suchthat t \subseteq x}$.
Consider the game $G(s,A)$ played as follows.  
In each round, player $\I$ offers two numbers $m_{t\concat 0}$ and $m_{t \concat 1}$
assigned to $N_{t \concat 0}$ and $N_{t \concat 1}$ respectively.
For notational convenience, we also set $m_\emptyset=m_0+m_1$.
Player $\II$ then selects
either $t\concat 0$ or $t\concat 1$ and play continues on to the next round.
A typical run of the game is exhibited in
Figure~\ref{fig:mg}.
\begin{figure}[ht]
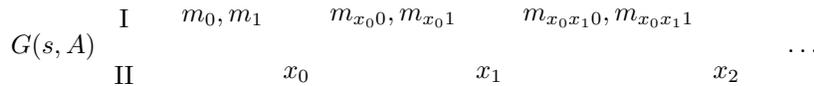

\begin{games}
\game[player spacing=0.25cm, game label={$G(s, A)$}, dots]{
    {$m_0, m_1$}{$x_0$}
    {$m_{x_0 0}, m_{x_0 1}$}{$x_1$}
    {$m_{x_0 x_1 0}, m_{x_0 x_1 1}$}{$x_2$}
    }
\end{games}
\caption{The measure game} \label{fig:mg}
\end{figure}

We require that player $\I$ must follow the rules:
\begin{itemize}
\item $\forall t \in 2^{<\omega},~ m_t \in \mathbb{Q}$ 
\item $m_0+m_1 > s$
\item $\forall t \in 2^{<\omega},~ 0\leq m_t \leq  \mu(N_t)$
\item $\forall t \in 2^{<\omega}, ~0 < \mu(N_t) \Rightarrow m_{t} < \mu(N_t)$
\item $\forall t \in 2^{<\omega}, ~ m_{t\concat 0}+m_{t \concat 1}= m_t$
\end{itemize}
and player $\II$ must follow the rules:
\begin{itemize}
\item $\forall n \in \omega,~  x_n \in \setof{0, 1}$
\item $\forall n \in \omega,~  m_{x_0 x_1 \dots x_n} \neq 0$.
\end{itemize}
We declare that $\II$ wins a run of $G(s, A)$ if and only if $x = (x_0,x_1,\dots) \in A$, otherwise $\I$ wins.
The most significant rule is $\II$'s restriction that $m_{x_0 x_1 \dots x_n} \neq 0$, as this is how
player $\I$ exerts control over the outcome.

Note that as $\I$'s moves $m_t$ are from $\Q$, the game $G(s,A)$ is essentially an integer game, and
thus is determined from $\ad$. In fact, in Corollary~\ref{cor:gameeq} we point out that the
game $G(s,A)$ is equivalent to the version where $\I$'s moves $m_t$ are allowed to
be arbitrary reals. Thus $\ad_\R$  does not play a role in our discussions.

\section{Basic properties of the game} \label{sec:basic}

In this section we prove that this game characterizes $\mu$-measure in
the same way that the Banach-Mazur game characterizes category.
We first prove two lemmas which analyze winning strategies for players $\I$ and $\II$.
We then prove a theorem which gives the correspondence between the game and measure.
Finally, though we don't need it for the other results of the paper,
we show that the rational version of the game defined above is equivalent to the ``real version'' in which
$\I$'s moves are allowed to be real numbers.

We make use of the following simple concept. 

\begin{definition}
A function $M\colon 2^{<\omega} \to \mathbb{R}$ is a \emph{scaled measure} if $M(\emptyset) > 0$ and for
every  $t \in 2^{<\omega}$, 
\begin{itemize}
\item $\displaystyle 0 \leq M(t) \leq \mu(N_t)$
\item $M(t) = M(t \concat 0) + M(t \concat 1)$
\end{itemize}
If $T \subseteq 2^{<\omega}$ is a nonempty tree, then a scaled measure $M$ is a
\emph{scaled measure on $T$} if $t \not \in T \Rightarrow M(t)=0$. Also
by the {\emph support} of $M$ we mean the tree $T$ of $t$ such that
$M(t)>0$. 
\end{definition}

\begin{lemma} \label{lem:ws1}
Suppose $\sigma$ is  a winning strategy for $\I$ in the game $G(s,A)$. Then
$\sigma$ induces a scaled measure $M$ as follows. Let $M(\emptyset)=m_\emptyset$.
For $t \in 2^{<\omega}$,
let $\sigma(t)= (m_{t\concat 0}, m_{t \concat 1})$, provided $t$ follows the rules
against $\sigma$. In this case, let $M(t\concat i)= m_{t\concat i}$. Otherwise,
set $M(t\concat i)=0$. If $T$ is the support of $M$, then $[T] \subseteq A^c$
and $\mu([T])\geq M(\emptyset)>s$. 
\end{lemma}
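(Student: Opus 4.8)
The plan is to establish the three assertions in order: that $M$ is a scaled measure, that $[T]\subseteq A^c$, and that $\mu([T])\ge M(\emptyset)$. First I would check that the recursion defining $M$ really produces a scaled measure. The value $M(\emptyset)=m_\emptyset=m_0+m_1$ is positive because $\I$'s rules force $m_0+m_1>s\ge 0$, and the bound $M(\emptyset)\le\mu(N_0)+\mu(N_1)=\mu(N_\emptyset)$ follows from $m_i\le\mu(N_i)$ together with $N_0\sqcup N_1=2^\omega$. For nonempty $t$ following the rules against $\sigma$, the inequality $0\le M(t)\le\mu(N_t)$ is exactly one of $\I$'s rules, while if $t$ fails the rules then $M(t)=0$ by definition and the bound is trivial. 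For additivity I would split into cases: when $t$ follows the rules, $M(t)=m_t$ and $M(t\concat i)=m_{t\concat i}$, so $M(t)=M(t\concat 0)+M(t\concat 1)$ is precisely $\I$'s additivity rule at $t$; when $t$ fails the rules, both children are assigned $0$, and $M(t)=0$ as well, since if a \emph{proper} initial segment of $t$ fails then so does the predecessor of $t$, forcing $M(t)=0$ by definition, whereas if $t$ is itself the first failure then the value $\I$ assigned at $t$ was already $0$. This simultaneously shows that $T$ is closed under initial segments and that, for nonempty $t$, membership $t\in T$ is equivalent to $t$ following the rules against $\sigma$.

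Next I would prove $[T]\subseteq A^c$. Given a branch $x=(x_0,x_1,\dots)\in[T]$, every initial segment lies in $T$, so $M(x\res n)>0$ for all $n$; this says exactly that the moves $x_0,x_1,\dots$ form a legal play for $\II$ against $\sigma$, as $\II$ never selects a node $t\concat i$ with $M(t\concat i)=0$. Since $\sigma$ is a winning strategy for $\I$—and in particular never breaks $\I$'s rules along a legal run, as that would constitute a loss—the resulting complete run is a win for $\I$, which by definition of $G(s,A)$ means $x\notin A$. Hence every branch of $T$ lies in $A^c$.

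The main quantitative content is the estimate $\mu([T])\ge M(\emptyset)$. Writing $U_n=\bigcup\setof{N_t \suchthat t\in T,\ \abs{t}=n}$, each $U_n$ is clopen (a finite union of basic clopen sets), the sequence is decreasing because $T$ is a tree, and $[T]=\bigcap_n U_n$. Since the $N_t$ for distinct $t$ of length $n$ are disjoint, $\mu(U_n)=\sum_{t\in T,\,\abs{t}=n}\mu(N_t)\ge\sum_{t\in T,\,\abs{t}=n}M(t)=\sum_{\abs{t}=n}M(t)$, the last equality because $M$ vanishes off $T$; and a telescoping induction using additivity of $M$ gives $\sum_{\abs{t}=n}M(t)=M(\emptyset)$ for every $n$, so $\mu(U_n)\ge M(\emptyset)$. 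Finally, continuity of $\mu$ from above applied to $U_n\downarrow[T]$ yields $\mu([T])=\lim_n\mu(U_n)\ge M(\emptyset)>s$. I expect the only genuinely delicate point to be the bookkeeping in the first paragraph—confirming that the recursive definition is consistent and that zeros propagate downward so that $T$ is a tree and the telescoping is exact—while the estimate itself is a routine clopen approximation requiring nothing beyond the basic continuity properties of $\mu$.
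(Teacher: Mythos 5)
Your proposal is correct and follows essentially the same route as the paper: both arguments rest on the level-wise additivity identity $\sum_{t\in T\cap 2^n}M(t)=M(\emptyset)$, the bound $M(t)\le\mu(N_t)$, and continuity of $\mu$ from above along the clopen approximations $U_n$ (the paper phrases this last step as a contradiction, you phrase it directly, but the computation is identical). You supply more bookkeeping detail than the paper—verifying the scaled-measure axioms, the downward propagation of zeros, and the legality of branches as plays by $\mathrm{II}$—which the paper dismisses as ``clear by the rules of the game.''
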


\begin{proof}
It is clear by the rules of the game that $M$ as defined in the statement is a scaled
measure with $M(\emptyset)=m_0+m_1>s$. Since $\sigma$ is winning for $\I$, $[T] \subseteq A^c$,
where $T$ is the support of $M$. 
To see that $\mu([T]) \geq M(\emptyset)>s$, suppose that $\mu([T])<M(\emptyset)$.
For $n$ large enough we have that $\sum_{t \in T\cap 2^n} \mu(N_t) <M(\emptyset)$.
Due to the additivity of $M$ $M(\emptyset)= \sum_{t \in T\cap 2^n} M(t)$.  However by the rules of the game, $M(t)\leq
\mu(N_t)$ for all $t$. Thus, $M(\emptyset) \leq \sum_{t \in T\cap 2^n} \mu(N_t)< M(\emptyset)$,
a contradiction.
\end{proof}

We now consider the case where $\tau$ is a winning strategy for $\II$ in $G(s,A)$.
The following lemma (implicitly in Martin \cite{Martin1998}, see also Rosendal \cite{rosendal}),
introduces a useful notation which we will also use later.

\begin{lemma} \label{lemma:delta}
Let $\tau$ be a winning strategy for $\II$ in $G(s, A)$. Let $x_0 \dots x_n = t \in 2^{<\omega}$
and let $\hat{m} = \setof{m_{x_0\dots x_i\concat 0}, m_{x_0 \dots x_i\concat 1}}_{i < n}$
be any finite sequence of moves by $\I$ so that $\tau$ plays $x_0 \dots x_n$ in response to $\hat{m}$,
i.e., $x_i = \tau(\setof{m_{x_0 \dots x_j \concat 0}, m_{x_0 \dots x_j \concat 1}}_{j < i})$ for each $i$.

If we define 
\[\delta_\tau^{\hat{m}}(i) = \inf\left(\setof{m_{t \concat i} \suchthat \exists m_{t\concat (1-i)},~m_{t \concat 0} + m_{t \concat 1}=m_t~\text{and}~ \tau\left(\hat{m} \concat (m_{t \concat 0}, m_{t \concat 1})\right) = i} \cup \setof{\mu(N_{t \concat i})}\right)\]

Then 
\begin{itemize}
\item $\delta_\tau^{\hat{m}}(0)+\delta_\tau^{\hat{m}}(1) \leq r = \begin{cases}m_t & ~\text{if}~\abs{t}>0\\ s &~ \text{if}~ \abs{t}=0
\end{cases}$
\item $\displaystyle\delta_\tau^{\hat{m}}(i) \leq \mu(N_{t \concat i})$
\end{itemize} 
\end{lemma}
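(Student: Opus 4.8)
The second inequality $\delta_\tau^{\hat{m}}(i) \le \mu(N_{t\concat i})$ is immediate, since $\mu(N_{t\concat i})$ is itself one of the elements of the set over which the infimum defining $\delta_\tau^{\hat{m}}(i)$ is taken, so that infimum cannot exceed it. All of the work is in the first inequality, and my plan is to prove it by contradiction.

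Write $d_i = \delta_\tau^{\hat{m}}(i)$ and let $r$ be as in the statement. The guiding idea is that $d_i$ is the least value I must assign to the child $t\concat i$ in order to force $\tau$ to select $i$, and that $\tau$ is obliged to commit to exactly one of the two children. So, assuming $d_0 + d_1 > r$, I would fabricate a single legal move for I at the position determined by $\hat{m}$ and $t$ for which $m_{t\concat 0} < d_0$ and $m_{t\concat 1} < d_1$ hold simultaneously. Whatever child $i$ the strategy $\tau$ selects in response, its move is legal, so $m_{t\concat i} \neq 0$ and $m_{t\concat i}$ lies in the set whose infimum is $d_i$; hence $m_{t\concat i} \ge d_i$, contradicting $m_{t\concat i} < d_i$. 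Observe that both coordinates of such a move automatically satisfy the measure constraints, including the strict bound when $\mu(N_{t\concat i}) > 0$, because each coordinate is strictly below $d_i \le \mu(N_{t\concat i})$; thus only rationality, nonnegativity, and the sum constraint must be engineered by hand.

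The construction of the move splits along the definition of $r$. When $\abs{t} = 0$ we have $r = s$ and I's opening move need only have sum exceeding $s$; since $d_0 + d_1 > s$ I can pick rationals $m_0 \in [0, d_0)$ and $m_1 \in [0, d_1)$ with $m_0 + m_1 > s$, and in the degenerate event that one threshold, say $d_0$, vanishes, I set $m_0 = 0$ and take a rational $m_1 \in (s, d_1)$, which is available because then $d_1 > s$. When $\abs{t} > 0$ the sum is pinned: I must offer $m_{t\concat 0} + m_{t\concat 1} = m_t$, and here $m_t > 0$ because $t$ lies on the branch $\tau$ is building and II obeys the rule $m_{x_0 \dots x_n} \neq 0$. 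In this case I first check that $d_0, d_1 \le m_t$: if $\mu(N_{t\concat 0}) \le m_t$ this follows from the second bullet, while if $\mu(N_{t\concat 0}) > m_t$ the move $(m_t, 0)$ is legal and compels $\tau$ to pick $0$ (the child $t\concat 1$ carries value $0$ and is forbidden to II), placing $m_t$ in the relevant set, so $d_0 \le m_t$; the bound $d_1 \le m_t$ is symmetric. The assumption $d_0 + d_1 > m_t$ then forces $d_0, d_1 > 0$, so the interval $(m_t - d_1, d_0)$ is nonempty, and a rational choice of $m_{t\concat 0}$ there with $m_{t\concat 1} = m_t - m_{t\concat 0}$ gives $0 < m_{t\concat 0} < d_0$ and $0 < m_{t\concat 1} < d_1$.

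The conceptual content, namely that $\tau$ must commit and thereby contradict the defining infimum, is short and identical in both cases; I expect the main obstacle to be purely bookkeeping. Specifically, the delicate points are verifying that each fabricated move obeys every rule imposed on I, and correctly treating the degenerate cases in which some $d_i = 0$, where a coordinate is forced to $0$ and II is thereby driven to the other child. Establishing $d_0, d_1 \le m_t$ in the fixed-sum case and dispatching these boundary situations is where the care is needed; once they are settled, the contradiction follows uniformly.
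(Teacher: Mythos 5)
Your proof is correct and takes essentially the same approach as the paper's: assume $\delta_\tau^{\hat m}(0)+\delta_\tau^{\hat m}(1) > r$, fabricate a single legal move for $\I$ whose two coordinates are strictly below the respective thresholds, and contradict the infimum via the side $\tau$ is forced to select. The only difference is bookkeeping---the paper hits the fixed sum $m_t$ by rescaling rationals $m'_i < \delta_\tau^{\hat m}(i)$ by the factor $r/(m'_0+m'_1)$, while you first establish $\delta_\tau^{\hat m}(i) \le m_t$ using the auxiliary move $(m_t,0)$ and then pick from the interval $(m_t-\delta_\tau^{\hat m}(1),\, \delta_\tau^{\hat m}(0))$---and your explicit handling of the degenerate case $\delta_\tau^{\hat m}(i)=0$ (forcing $\tau$ to the other side) is in fact more careful than the paper's write-up, which tacitly assumes nonnegative rationals below each threshold exist.
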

\begin{proof}
The second item is clear.  For the first item, suppose not. Then $\delta_\tau^{\hat{m}}(0)+\delta_\tau^{\hat{m}}(1) > r$.  Let $m'_i < \delta_\tau^{\hat{m}}(i)$ be rational so that $m'_0+m'_1 > r$.  In the case that $\abs{t}>0$, let $m_{t \concat i} = m'_i\frac{r}{m'_0 + m'_1}$, each of which will still be rational, as $r=m_t$ was some legal move by $\I$.  These will also be such that $m_{t \concat 0}+m_{t \concat 1} = r = m_t$.  If $\abs{t}=0$, then let $m_{t \concat i}=m'_i$.  In either case, $(m_{t \concat 0}, m_{t \concat 1})$ is a legal move at this position, but $m_{t \concat i} < \delta_\tau^{\hat{m}}(i)$ for both sides, which is a contradiction as $\tau$
must choose one of the two sides.
\end{proof}

The next lemma is the analog of Lemma~\ref{lem:ws1} for player $\II$.

\begin{lemma} \label{lemma:rosendal}
Suppose $\II$ wins $G(s, A)$ with winning strategy $\tau$.
Then for any $\epsilon>0$ there exists a tree $T_\epsilon \subseteq 2^{<\omega}$ such that
$\mu([T_\epsilon])> 1-s-\epsilon$ and 
such that every branch through $T_\epsilon$ is a run consistent with $\tau$, and so 
$\left[T_\epsilon\right]\subseteq A$. In particular,  $\mu_*(A)\geq 1-s$.
\end{lemma}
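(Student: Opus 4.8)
The plan is to build the closed set $[T_\epsilon]$ by playing against $\tau$ and, at every node, forcing $\tau$ into each child that can be forced while committing as little mass as possible to that child; the quantities $\delta_\tau^{\hat{m}}(i)$ of Lemma~\ref{lemma:delta} measure exactly this minimal commitment. Concretely, I would grow $T_\epsilon$ from the root, attaching to each node $t$ I place a finite sequence $\hat{m}=\hat{m}^{(t)}$ of legal $\I$-moves against which $\tau$ answers $t$, and a positive budget $m(t)$ equal to the last mass $\I$ committed along that run. At the root I would use $\I$'s opening move $(m_0,m_1)$ with $m_0+m_1>s$: for each $i$ with $\delta_\tau^{\emptyset}(i)<\mu(N_i)$, the infimum defining $\delta_\tau$ lets me pick a legal opening whose $i$-th coordinate lies in $(\delta_\tau^{\emptyset}(i),\,\delta_\tau^{\emptyset}(i)+\eta_\emptyset)$ and forces $\tau$ to $i$; I set $m(i)$ to that coordinate. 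At a placed node $t$ I would repeat this one level down, forcing each child $i$ via a legal extension of $\hat{m}^{(t)}$ with committed mass $m(t\concat i)\in(\delta_\tau^{\hat{m}^{(t)}}(i),\,\delta_\tau^{\hat{m}^{(t)}}(i)+\eta_t)$. The error terms $\eta_t>0$ would be fixed in advance with $2\sum_t\eta_t<\epsilon$.

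The key mechanism is that the move forcing child $0$ and the move forcing child $1$ are \emph{different} positions of the game, so the two committed masses need not sum to $m(t)$; each is only slightly above the corresponding $\delta_\tau$ value, and Lemma~\ref{lemma:delta} guarantees $\delta_\tau^{\hat{m}^{(t)}}(0)+\delta_\tau^{\hat{m}^{(t)}}(1)\le m(t)$. This is what will let $T_\epsilon$ capture almost all of $\mu(N_t)$ below $t$ rather than only $m(t)$. Each branch $x\in[T_\epsilon]$ is then a run in which $\I$ plays the recorded moves and $\II$ follows $\tau$; $\I$'s moves are legal by construction and $\II$'s rule $m_{x_0\cdots x_n}\ne 0$ holds because every committed mass was chosen strictly positive, so $[T_\epsilon]\subseteq A$. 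I would also check that each placed node extends to an infinite branch: $m(t)>0$ forces $\mu(N_t)>0$, hence $m(t)<\mu(N_t)$ by $\I$'s rules, which rules out both children being unforceable (that would give $\delta_\tau^{\hat{m}^{(t)}}(0)+\delta_\tau^{\hat{m}^{(t)}}(1)=\mu(N_t)$, contradicting Lemma~\ref{lemma:delta}).

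For the measure bound I would track $C_n=\sum_{t\in T_\epsilon\cap 2^n}(\mu(N_t)-m(t))$. At the root, summing $\mu(N_i)-\delta_\tau^{\emptyset}(i)$ over $i$ (unforceable children contribute $0$, as there $\delta_\tau^{\emptyset}(i)=\mu(N_i)$) and using $\delta_\tau^{\emptyset}(0)+\delta_\tau^{\emptyset}(1)\le s$ gives $C_1\ge 1-s-2\eta_\emptyset$. For the inductive step, expanding $C_{n+1}$ over the nodes of level $n$, bounding each committed mass by $\delta_\tau^{\hat{m}^{(t)}}(i)+\eta_t$, and applying $\delta_\tau^{\hat{m}^{(t)}}(0)+\delta_\tau^{\hat{m}^{(t)}}(1)\le m(t)$ gives $C_{n+1}\ge C_n-2\sum_{t\in T_\epsilon\cap 2^n}\eta_t$. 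Telescoping with $2\sum_t\eta_t<\epsilon$ yields $C_n>1-s-\epsilon$ for every $n$.

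Since every $m(t)\ge 0$, this gives $\mu\bigl(\bigcup_{t\in T_\epsilon\cap 2^n}N_t\bigr)=\sum_{t\in T_\epsilon\cap 2^n}\mu(N_t)\ge C_n>1-s-\epsilon$, and as these clopen sets decrease to $[T_\epsilon]$, continuity of $\mu$ from above gives $\mu([T_\epsilon])\ge 1-s-\epsilon$; running the construction with $\epsilon/2$ in place of $\epsilon$ produces the strict inequality in the statement. Because $[T_\epsilon]\subseteq A$ is closed, $\mu_*(A)\ge\mu([T_\epsilon])$ for each $\epsilon$, and letting $\epsilon\to 0$ yields $\mu_*(A)\ge 1-s$. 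I expect the main difficulty to be bookkeeping rather than a single idea: one must keep the committed masses simultaneously strictly positive (for $\II$'s legality), within $\eta_t$ of the $\delta_\tau$ values (so each node loses only $\eta_t$), and summably small across the whole infinite tree, while handling unforceable children uniformly through the convention $\delta_\tau^{\hat{m}^{(t)}}(i)=\mu(N_{t\concat i})$.
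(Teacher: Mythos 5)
Your proposal is correct and follows essentially the same route as the paper's proof: both grow the tree level by level, using Lemma~\ref{lemma:delta} to force $\tau$ into each forceable child at a cost only slightly above the $\delta_\tau$ value, attach nested plays to the nodes so that every branch is a run consistent with $\tau$, and derive the measure bound from $\delta_\tau^{\hat{m}}(0)+\delta_\tau^{\hat{m}}(1)\le m_t$ together with error terms summing below $\epsilon$. The only difference is dual bookkeeping: you track $C_n=\sum_{t\in T_\epsilon\cap 2^n}\left(\mu(N_t)-m(t)\right)$ from below, while the paper assigns $m_u=\mu(N_u)$ to unforceable nodes and bounds $\sum_{u\in 2^n}m_u\le s+\epsilon$ from above, which is the same estimate.
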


\begin{proof}
Let $\tau$ be a winning strategy for $\II$ in $G(s, A)$ and let $\epsilon>0$.
We build $T_\epsilon$ by induction on the levels of the tree. For each $u \in 2^{<\omega}$
we define a number $m_u\in [0, \mu(N_u)]$. $T_\epsilon$ will be the set of $u$
for which $m_u< \mu(N_u)$. For each $u \in T_\epsilon$ we will define an
associated play $p=p(u)=((m'_0,m'_1),u(0), (m'_{u(0),0}, m'_{u(0),1}), u(1),\dots, u(n-1))$
of the game $G(s,A)$ consistent with $\tau$ for which $\II$'s moves are equal to
$u$ and for every initial segment $t$ of $u$ we have $m'_t=m_t$. We also
maintain that for each level $n$ we have
\[
\sum_{u \in 2^n} m_u \leq s+ \frac{2^n-1}{2^n} \epsilon.
\]

To begin, let $m_\emptyset=s <\mu(2^\omega))=1$, and so $\emptyset \in T_\epsilon$.  
Consider now $\delta_\tau^\emptyset$ in the notation of Lemma~\ref{lemma:delta}, if $\delta_\tau^\emptyset(i)=\mu(N_i)$, then set $m_i=\mu(N_i)$.  
If $\delta_\tau^\emptyset(i)<\mu(N_i)$
then choose $m_i<\mu(N_i)$ such that
$0<m_i-\delta_\tau^\emptyset(i)<\frac{\epsilon}{4}$ and so that $\tau$
will choose $i$ in response to a move that assigns $m_i$ to the side
$N_i$. We clearly have that $m_0+m_1 \leq s+\frac{1}{2}\epsilon$.

In general, suppose the $m_u$ have been defined for $u \in 2^n$
satisfying the above properties. For any $u \in 2^n$ so that $m_u =\mu(N_u)$, set $m_{u \concat i}=\mu(N_{u\concat i})$ for $i \in 2$.  
Now suppose $u \in 2^n$ and
$m_u <\mu(N_u)$. By assumption we have a play $p=p(u)$ consistent with $\tau$
in which $\tau$'s last move $u(n-1)$ selected a side with
assigned value $m_u$. We apply Lemma~\ref{lemma:delta} to
$\tau$, $p(u)$, and $m_u$ to get $\delta_\tau^p=(\delta_\tau^p(0),\delta_\tau^p(1))$.
For $i$ such that $\delta_\tau^p(i)=\mu(N_{u\concat i})$, set $m_{u\concat i}=
\mu(N_{u\concat i})$. Otherwise we choose $m_{u\concat i} < \mu(N_{u\concat i})$
such that $0 < m_{u\concat i} -\delta_\tau^p(i) < \frac{\epsilon}{4^{n+1}}$. 
Since there only $2^{n+1}$ nodes at level $n+1$, we have that
$\sum_{u \in 2^{n+1}} m_u \leq (\sum_{u \in 2^n} m_u) + \frac{\epsilon}{2^{n+1}}$. 
This maintains the desired inequality.

This completes the definition of the tree $T_\epsilon$. For each node
$u \in T_\epsilon$ we have a corresponding play $p(u)$ following $\tau$.
By construction, if $u \subseteq v$ then $p(v)$ extends the play $p(u)$.
Thus, for any $x \in [T_\epsilon]$ there is a run of the game $G(s,A)$
following $\tau$ in which $\II$ has played $x$, and so $x \in A$.
For each level $n$ we have maintained that $\sum_{u \in 2^n} m_u < s+\epsilon$. 
Also, for $u \notin T_\epsilon$ we have $m_u=\mu(N_u)$, thus,
$\sum_{u \in 2^n\setminus T_\epsilon} \mu(N_u) < s+\epsilon$. It follows
that $\mu([T_\epsilon]) \geq 1- (s+\epsilon)$.

\end{proof}

\begin{theorem} [Martin\cite{Martin1998}] \label{thm:equiv}
Let $A \subseteq{2^\omega}$ and $s \in [0, 1)$.
\begin{equation*}
\begin{split}
\mu_* (A^c) > s ~ & \Leftrightarrow ~\I~\text{wins}~ G(s, A) \\
\mu^* (A^c) \leq s ~ & \Leftrightarrow ~\II~\text{wins}~ G(s, A)
\end{split}
\end{equation*}
\end{theorem}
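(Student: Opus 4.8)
The plan is to prove the four implications separately; the two forward directions are immediate from the lemmas already established, while the two converses require us to exhibit explicit strategies. For the forward direction of the first equivalence, a winning strategy for $\I$ yields, by Lemma~\ref{lem:ws1}, a closed set $[T]\subseteq A^c$ with $\mu([T])>s$, whence $\mu_*(A^c)>s$. For the forward direction of the second equivalence, a winning strategy for $\II$ yields, by Lemma~\ref{lemma:rosendal}, the bound $\mu_*(A)\ge 1-s$; combining this with the identity $\mu^*(A^c)=1-\mu_*(A)$ (valid in a probability space, by complementing the open and closed sets witnessing the two quantities) gives $\mu^*(A^c)\le s$.

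For the converse of the first equivalence, suppose $\mu_*(A^c)>s$ and fix a closed set $F\subseteq A^c$ with $\mu(F)>s$, together with a rational $q\in(s,\mu(F))$. Player $\I$'s strategy is to play rational additive values $m_t$ satisfying $m_t<\mu(N_t\cap F)$ along the support, starting from $m_\emptyset=q$. Since $m_t<\mu(N_t\cap F)=\mu(N_{t\concat 0}\cap F)+\mu(N_{t\concat 1}\cap F)$, there is always room to split $m_t$ into rationals $m_{t\concat 0},m_{t\concat 1}$ with each $m_{t\concat i}<\mu(N_{t\concat i}\cap F)$, which forces $m_{t\concat i}<\mu(N_{t\concat i})$, so all of $\I$'s rules are met. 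The point is that every support node $t$ has $N_t\cap F\ne\emptyset$, so for any branch $x$ consistent with this play the nested nonempty compact sets $N_{x\restrictedto n}\cap F$ share a common point, necessarily $x$; hence $x\in F\subseteq A^c$ and $\I$ wins.

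For the converse of the second equivalence---the main obstacle---I would first pass from $\mu^*(A^c)\le s$ to $\mu_*(A)\ge 1-s$ via the same identity, and then build a strategy for $\II$ that carries along closed witnesses. At each reached position $u=x_0\dots x_{n-1}$, $\II$ maintains a closed set $F_u\subseteq A\cap N_u$ with $\mu(F_u)>\mu(N_u)-m_u$; the base case holds since $\mu_*(A)\ge 1-s>1-m_\emptyset$. When $\I$ plays $m_{u\concat 0},m_{u\concat 1}$, additivity of $\mu$ over $N_u=N_{u\concat 0}\cup N_{u\concat 1}$ gives $\mu(F_u\cap N_{u\concat 0})+\mu(F_u\cap N_{u\concat 1})=\mu(F_u)>(\mu(N_{u\concat 0})-m_{u\concat 0})+(\mu(N_{u\concat 1})-m_{u\concat 1})$, so some side $i$ satisfies $\mu(F_u\cap N_{u\concat i})>\mu(N_{u\concat i})-m_{u\concat i}$; $\II$ moves to that side and sets $F_{u\concat i}=F_u\cap N_{u\concat i}$. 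This move is legal because $m_{u\concat i}>\mu(N_{u\concat i})-\mu(F_{u\concat i})\ge 0$, so in particular $m_{u\concat i}\ne 0$. The crux is then the conclusion: the $F_{x\restrictedto n}$ form a decreasing sequence of nonempty compact subsets of $A$, and their intersection lies inside $\bigcap_n N_{x\restrictedto n}=\{x\}$, so compactness of $2^\omega$ forces $x\in A$, and $\II$ wins.

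I would also observe that the two right-hand conditions are not exhaustive: when $A^c$ is not measurable and $s$ lies strictly between $\mu_*(A^c)$ and $\mu^*(A^c)$, neither player wins, which is why the theorem is established directly in $\zfc$ with no appeal to determinacy. The only genuinely delicate points are the rational bookkeeping in $\I$'s strategy and the compactness argument pinning each branch to its nested closed witness in $\II$'s strategy; the remaining steps are routine verifications of the rules.
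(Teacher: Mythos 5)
Your proof is correct and follows essentially the same route as the paper: the forward directions are quoted from Lemmas~\ref{lem:ws1} and \ref{lemma:rosendal}, your strategy for $\I$ is the paper's rational scaled measure concentrated on a closed $F \subseteq A^c$, and your strategy for $\II$, which maintains closed sets $F_u \subseteq A \cap N_u$ with $\mu(F_u) > \mu(N_u) - m_u$, is precisely the complement-side formulation of the paper's invariant $\mu(U \cap N_u) < m_u$ for an open $U \supseteq A^c$. The only cosmetic differences are your top-down splitting of a fixed rational $q \in (s, \mu(F))$ in place of the paper's per-node sandwich $(1-\epsilon)\mu(F \cap N_u) < M(u) < \mu(F \cap N_u)$, and your compactness phrasing of the closing step in place of the paper's appeal to closedness of $F$ and $U^c$.
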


\begin{proof}
First suppose $\I$ wins $G(s,A)$. From Lemma~\ref{lem:ws1}, a winning strategy $\sigma$ for $\I$
gives  a scaled measure $M$ on $2^\omega$ with $M(\emptyset)>s$. If $T$ is the support of
$M$, the Lemma also shows that $[T] \subseteq A^c$ and $\mu([T]) \geq M(\emptyset)>s$.
Thus $\mu_*(A^c) >s$. 

Suppose next that $\II$ has a winning strategy $\tau$ for $G(s,A)$. 
From Lemma~\ref{lemma:rosendal} we have that $\mu_*(A)\geq 1-s$, which is equivalent
to $\mu^*(A^c) \leq s$.

Assume now that $\mu^*(A^c)\leq s$, and we define a winning strategy $\tau$ for  $\II$ in $G(s,A)$. Suppose $\I$'s
first move is $(m_0,m_1)$, where $m_0+m_1>s$. Let $U$ be an open set containing $A^c$
with $\mu(U)<(m_0+m_1)$. Let $\tau (m_0,m_1)=x_0$ where $\mu(U\cap N_{x_0}) < m_{x_0}$. Continuing, we define $\tau$
so that for any play $p$ on length $n$ consistent with $\tau$ with $\II$'s moves $u=(x_0, \dots, x_{n-1})$,
we have that $\mu(U \cap N_u) < m_u< \mu(N_u)$. In particular, $N_u \cap U^c \neq \emptyset$. Since $U^c$
is closed, this shows that $\tau$ is a winning strategy for $\II$.

Finally, assume that $\mu_*(A^c)>s$. Let $F \subseteq A^c$ be closed with $\mu(F) >s$.
Fix $\epsilon >0$ such that $(1-\epsilon) \mu(F) >s$. We define a scaled measure $M$
and let $\sigma$ be the corresponding strategy for $\I$, that is,
if $(m_{u \concat 0}, m_{u \concat 1})$ is the response of $\sigma$ to $\II$ playing $u$,
then $m_{u \concat i}=M(u \concat i)$. Furthermore, we define $M$ so that all values $M(u)$ are
rational. First, we define $M(0)$, $M(1)$ to be rationals with
$(1-\epsilon) \mu(F\cap N_i) < M(i) < \mu(F\cap N_i)$. Note that $M(0)+M(1) >s$.
In general, if $M(u)\in \Q$ is defined with $(1-\epsilon) \mu(F\cap N_u) < M(u) < \mu(F\cap N_u)$,
let $M(u\concat 0)$, $M(u \concat 1)$ in $\Q$ be such that $M(u \concat 0)+M(u \concat 1)=M(u)$
and $(1-\epsilon) \mu(F\cap N_{u \concat i}) < M(u \concat i) < \mu(F\cap N_{u \concat i})$.
This defines the strategy $\sigma$ for $\I$ in $G(s,A)$. If $x \in 2^{\omega}$ is a run
according to $\sigma$, then for any initial segment $u$ of $x$ we have $M(u)>0$ and so
$\mu(F\cap N_u)>0$. As $F$ is closed, $x \in F \subseteq A^c$.

\end{proof}

\begin{corollary} [Martin \cite{Martin1998}]
If for all $s\in [0,1)$ the game $G(s,A)$ is determined, then $A$ is $\mu$-measurable.
In particular if $\det(\bG)$ holds for some pointclass $\bG$, then all sets
in $\bG$ are $\mu$-measurable. If $\ad$ holds, then all sets are $\mu$-measurable. 
\end{corollary}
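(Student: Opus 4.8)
The plan is to read off measurability directly from Theorem~\ref{thm:equiv} together with the determinacy hypothesis. Recall that $A$ is $\mu$-measurable precisely when $\mu_*(A^c)=\mu^*(A^c)$ (equivalently $\mu_*(A)=\mu^*(A)$, since $\mu_*(A)=1-\mu^*(A^c)$), and that in general $\mu_*(A^c)\le\mu^*(A^c)$. Theorem~\ref{thm:equiv} converts the two relevant inequalities into assertions about who wins $G(s,A)$: player $\I$ wins iff $\mu_*(A^c)>s$, and player $\II$ wins iff $\mu^*(A^c)\le s$. The idea is that determinacy forces these two conditions to meet, leaving no gap between the inner and outer measures.

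Concretely, I would argue by contradiction. Suppose $\mu_*(A^c)<\mu^*(A^c)$ and choose $s\in[0,1)$ with $\mu_*(A^c)\le s<\mu^*(A^c)$; such an $s$ exists since $\mu^*(A^c)\le 1$. Because $\mu_*(A^c)\le s$, the first equivalence of Theorem~\ref{thm:equiv}, in contrapositive form, says that $\I$ does \emph{not} win $G(s,A)$. By the assumed determinacy of $G(s,A)$, player $\II$ then has a winning strategy, so the second equivalence gives $\mu^*(A^c)\le s$, contradicting $s<\mu^*(A^c)$. Hence $\mu_*(A^c)=\mu^*(A^c)$ and $A$ is $\mu$-measurable. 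Note that this uses only that at least one player wins each $G(s,A)$; the two outcomes are automatically mutually exclusive, since $\mu_*(A^c)>s$ and $\mu^*(A^c)\le s$ cannot hold simultaneously.

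For the pointclass statements I would reduce to the hypothesis $\det(\bG)$ on integer games. As noted after the definition of the game, $\I$'s rational moves can be coded by integers, so $G(s,A)$ is equivalent to a game on $\omega$; the map sending a run to the point $x=(x_0,x_1,\dots)\in 2^\omega$ built from $\II$'s moves is continuous, and the winning condition for $\II$ is exactly $x\in A$. Thus the payoff set of the coded integer game is a continuous preimage of $A$, hence lies in $\bG$ whenever $A\in\bG$, and $\det(\bG)$ yields the determinacy of every $G(s,A)$ with $A\in\bG$; the argument above then gives measurability. Taking $\bG$ to be all subsets (under $\ad$) gives the final claim. The one point requiring care, and the only real obstacle, is this coding: one must check that runs violating $\I$'s or $\II$'s rules can be folded into the integer game (the first player to break a rule loses) without pushing the payoff complexity beyond a continuous preimage of $A$, so that $\det(\bG)$ genuinely applies.
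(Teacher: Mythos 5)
Your proof is correct and is essentially the argument the paper intends: the corollary is stated without proof precisely because it follows immediately from Theorem~\ref{thm:equiv} in the way you describe (determinacy leaves no $s$ in the gap between $\mu_*(A^c)$ and $\mu^*(A^c)$), and your coding remark matches the paper's observation that $G(s,A)$ is essentially an integer game since $\I$'s moves are rational.
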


Theorem~\ref{thm:equiv} and its proof have several quick corollaries which we next mention.

\begin{corollary}[Countable additivity]\label{lemma:countadd}
Let $\setof{\epsilon_n \suchthat n \in \omega}$ be such that $\epsilon_n\geq 0$ for all $n$ and
$\sum \epsilon_n = \epsilon < 1$.  Let $A_n \subseteq 2^\omega$ such that $\II$ has a
winning strategy in $G(\epsilon_n, A_n)$ for all $n$, then $\II$ has a winning
strategy in $G(\epsilon, A)$ where $A = \bigcap A_n$. 
\end{corollary}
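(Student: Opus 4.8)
The plan is to translate the hypotheses through Theorem~\ref{thm:equiv} into measure-theoretic statements, combine them, and translate back. Since $\II$ wins $G(\epsilon_n, A_n)$ for each $n$, Theorem~\ref{thm:equiv} gives $\mu^*(A_n^c) \leq \epsilon_n$ for every $n$. I want to conclude that $\II$ wins $G(\epsilon, A)$ where $A = \bigcap_n A_n$, which by the same theorem is equivalent to showing $\mu^*(A^c) \leq \epsilon$. Now $A^c = \bigcup_n A_n^c$, so by countable subadditivity of outer measure we have $\mu^*(A^c) = \mu^*\!\left(\bigcup_n A_n^c\right) \leq \sum_n \mu^*(A_n^c) \leq \sum_n \epsilon_n = \epsilon$. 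Since $\epsilon < 1$, this $s = \epsilon$ is a legal parameter for the game, and Theorem~\ref{thm:equiv} yields a winning strategy for $\II$ in $G(\epsilon, A)$.

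The only subtlety is that invoking countable subadditivity of $\mu^*$ uses a measure-theoretic fact, whereas the stated aim of the paper is to give \emph{direct} combinatorial arguments via the game without relying on classical measure theory. So the more faithful approach is to build the winning strategy for $G(\epsilon, A)$ explicitly from the strategies $\tau_n$ for $G(\epsilon_n, A_n)$, mirroring the subadditivity estimate at the level of trees rather than measures. Concretely, I would apply Lemma~\ref{lemma:rosendal} to each $\tau_n$: for a chosen sequence of tolerances $\eta_n > 0$ with $\sum_n \eta_n$ small, obtain trees $T_{\eta_n}$ with $\mu([T_{\eta_n}]) > 1 - \epsilon_n - \eta_n$ and $[T_{\eta_n}] \subseteq A_n$. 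Intersecting, $\bigcap_n [T_{\eta_n}] \subseteq \bigcap_n A_n = A$, and the complement $\bigcup_n [T_{\eta_n}]^c$ has a mass bound $\sum_n (\epsilon_n + \eta_n) = \epsilon + \sum_n \eta_n$ which can be made less than any prescribed $\epsilon + \delta$. This shows $\mu_*(A) \geq 1 - \epsilon$, hence $\mu^*(A^c) \leq \epsilon$, and Theorem~\ref{thm:equiv} again provides the strategy.

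The main obstacle I anticipate is the bookkeeping in combining the per-$n$ tree approximations uniformly: the trees $T_{\eta_n}$ live at different levels and I must ensure the intersection is still fat enough while keeping each branch simultaneously consistent with all the $\tau_n$. The cleanest route is probably to avoid constructing a single strategy by hand and instead argue entirely through the measure-theoretic characterization of Theorem~\ref{thm:equiv}, using the subadditivity estimate $\mu^*\!\left(\bigcup_n A_n^c\right) \leq \sum_n \mu^*(A_n^c)$ as the one external input, since this corollary is explicitly framed as a ``quick corollary'' of Theorem~\ref{thm:equiv} and its proof rather than a showcase of the purely combinatorial method.
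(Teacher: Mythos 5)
Your proof is correct and is essentially identical to the paper's: the paper likewise applies Theorem~\ref{thm:equiv} to get $\mu^*(A_n^c)\leq\epsilon_n$, uses countable subadditivity of $\mu^*$ on $A^c=\bigcup_n A_n^c$, and applies Theorem~\ref{thm:equiv} again. Your sketched alternative via Lemma~\ref{lemma:rosendal} matches the paper's own remark that a direct game argument avoiding Theorem~\ref{thm:equiv} is also possible.
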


\begin{proof}
From Theorem~\ref{thm:equiv} we have that $\mu^*(A_n^c) \leq \epsilon_n$ for each $n$.
So $\mu^*(A^c)=\mu^*(\bigcup_n A_n^c) \leq \epsilon= \sum \epsilon_n$. 
By Theorem~\ref{thm:equiv} again we have that $\II$ wins $G(\epsilon, A)$. 
\end{proof}

\begin{remark}
One can also give a direct game argument (avoiding Theorem~\ref{thm:equiv}) for Corollary~\ref{lemma:countadd}
using Lemma~\ref{lemma:rosendal}.
\end{remark}

The following corollary regards the symmetry of the game between the players.

\begin{corollary}\label{lemma:playerswap}
Let $\epsilon>0$ and $s>0$.
\[ \I~\text{wins}~G(s, A)  \Rightarrow \II~\text{wins}~G(1-s, A^c) \] 
\[ \II~\text{wins}~G(s-\epsilon, A)  \Rightarrow \I~\text{wins}~G(1-s, A^c)\]
\end{corollary}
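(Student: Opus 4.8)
The plan is to derive both implications directly from the characterization in Theorem~\ref{thm:equiv}, which translates statements about who wins $G(s,B)$ into statements about inner and outer measure of $B^c$. The key observation is that the game $G(s,B)$ is governed by the quantities $\mu_*(B^c)$ and $\mu^*(B^c)$, so when we pass from $(s,A)$ to $(1-s,A^c)$ we are interchanging the roles of $A$ and $A^c$, and the relevant measures become $\mu_*(A)$ and $\mu^*(A)$. I would set up the computation by recording the two equivalences
\begin{equation*}
\mu_*(B^c) > t \;\Leftrightarrow\; \I~\text{wins}~G(t,B), \qquad \mu^*(B^c) \leq t \;\Leftrightarrow\; \II~\text{wins}~G(t,B),
\end{equation*}
and then simply substitute the appropriate $B$ and $t$.

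For the first implication, I would start from the hypothesis that $\I$ wins $G(s,A)$. By Theorem~\ref{thm:equiv} this gives $\mu_*(A^c) > s$. Taking complements, $\mu_*(A^c) = 1 - \mu^*(A)$, so $1 - \mu^*(A) > s$, i.e.\ $\mu^*(A) < 1-s$, and hence $\mu^*(A) \leq 1-s$. Now apply the second equivalence of Theorem~\ref{thm:equiv} with $B = A^c$ and $t = 1-s$: since $\mu^*((A^c)^c) = \mu^*(A) \leq 1-s$, we conclude $\II$ wins $G(1-s,A^c)$, as desired. The only subtlety worth spelling out is the identity $\mu_*(E^c) = 1 - \mu^*(E)$, which is immediate from the definitions of inner and outer measure (a closed set inside $E^c$ is the complement of an open set containing $E$, and $\mu$ is a probability measure), together with the strict-to-nonstrict weakening of the inequality.

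For the second implication, I would begin from $\II$ wins $G(s-\epsilon,A)$, which by Theorem~\ref{thm:equiv} yields $\mu^*(A^c) \leq s-\epsilon < s$. Using $\mu^*(A^c) = 1 - \mu_*(A)$, this reads $1 - \mu_*(A) < s$, so $\mu_*(A) > 1-s$. Applying the first equivalence with $B = A^c$ and $t = 1-s$: since $\mu_*((A^c)^c) = \mu_*(A) > 1-s$, we get $\I$ wins $G(1-s,A^c)$. Here the role of the $\epsilon$ is precisely to convert $\II$'s winning condition, which only delivers the non-strict inequality $\mu^*(A^c)\le s-\epsilon$, into the strict inequality $\mu^*(A^c) < s$ needed to feed into the strict inequality required by $\I$'s characterization.

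I do not anticipate a genuine obstacle here, since this is a bookkeeping argument once Theorem~\ref{thm:equiv} is in hand; the main point requiring care is tracking the asymmetry between strict and non-strict inequalities in the two rows of the theorem. In particular, the presence of $\epsilon$ in the second implication (and its absence in the first) is forced by exactly this asymmetry, and I would make sure the substitutions $s \mapsto 1-s$ respect the requirement $s \in [0,1)$ so that the target games $G(1-s,\cdot)$ are well-defined, which holds whenever $s > 0$ as assumed.
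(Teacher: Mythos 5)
Your proposal is correct and follows essentially the same route as the paper's proof: both implications are reduced to Theorem~\ref{thm:equiv} together with the duality $\mu_*(B) = 1-\mu^*(B^c)$, with the $\epsilon$ serving exactly to bridge the strict/non-strict asymmetry between $\I$'s and $\II$'s characterizations. The paper states this argument more tersely (handling the second implication with ``done similarly''), but the content is identical to what you wrote.
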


\begin{proof}
For the first implication, $\I$ winning $G(s,A)$ is equivalent to $\mu_*(A^c)>s$, and
$\II$ winning $G(1-s,A^c)$ is equivalent to $\mu^*(A)\leq 1-s$. Since
$\mu_*(B)= 1-\mu^*(B^c)$ for any set $B$, the implication follows. The second implication
is done similarly. 
\end{proof}

\begin{remark}
Note that if $\I$ wins $G(0,A)$ then actually $\I$ wins $G(\epsilon, A)$ for some (all)
sufficiently small $\epsilon>0$, and so by Corollary~\ref{lemma:playerswap}
$\II$ wins $G(1-\epsilon, A)$ for some such positive $\epsilon$.
\end{remark}

And finally the following corollary gives the 
equivalence of the rational and real version of the game $G(s,A)$.

\begin{corollary} \label{cor:gameeq}
Let $A\subseteq 2^\omega$, and $s \in [0,1)$. Then the game $G(s,A)$ is equivalent to the game
$G'(s,A)$ which is played just as $G(s,A)$ except $\I$'s moves $m_p$ need not be rational.
\end{corollary}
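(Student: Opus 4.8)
The plan is to show that the two games $G(s,A)$ and $G'(s,A)$ have the same winner, by proving that a winning strategy for either player in one game can be converted into a winning strategy for the same player in the other. Since $G(s,A)$ is a sub-game of $G'(s,A)$ (every rational move is a real move), the direction that requires work is converting winning strategies from the real game $G'(s,A)$ into winning strategies for the rational game $G(s,A)$. I would handle this by appealing to Theorem~\ref{thm:equiv}, whose characterization in terms of inner and outer measure does not reference the rationality of $\I$'s moves in any essential way.

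\medskip

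First I would observe that the entire analysis behind Theorem~\ref{thm:equiv} — namely Lemmas~\ref{lem:ws1}, \ref{lemma:delta}, and \ref{lemma:rosendal} — goes through verbatim for the real version $G'(s,A)$. A winning strategy $\sigma$ for $\I$ in $G'(s,A)$ still induces a scaled measure with $M(\emptyset)>s$ and support tree $T$ satisfying $[T]\subseteq A^c$, and the computation $\mu([T])\geq M(\emptyset)>s$ used only the additivity $M(t)=M(t\concat 0)+M(t\concat 1)$ and the bound $M(t)\leq \mu(N_t)$, both of which hold regardless of rationality. Hence $\I$ wins $G'(s,A)$ implies $\mu_*(A^c)>s$. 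Similarly, the construction of $T_\epsilon$ in Lemma~\ref{lemma:rosendal}, via the $\delta_\tau^{\hat m}$ quantities, never needs $\I$'s moves to be rational, so $\II$ wins $G'(s,A)$ implies $\mu^*(A^c)\leq s$.

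\medskip

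Next I would combine these with the converse directions already established in Theorem~\ref{thm:equiv} for the \emph{rational} game. Concretely: if $\I$ wins $G'(s,A)$, then $\mu_*(A^c)>s$, and by the final paragraph of the proof of Theorem~\ref{thm:equiv} this produces a winning strategy for $\I$ in the rational game $G(s,A)$; conversely a win for $\I$ in $G(s,A)$ is trivially a win in $G'(s,A)$. The same logic applies to $\II$: a win in $G'(s,A)$ gives $\mu^*(A^c)\leq s$, which by Theorem~\ref{thm:equiv} yields a winning $\tau$ for $\II$ in $G(s,A)$, and again the reverse inclusion of strategies is immediate. Since by determinacy (or simply by the dichotomy $\mu_*(A^c)>s$ versus $\mu^*(A^c)\leq s$ combined with $\mu_*\leq\mu^*$) exactly one of these conditions can be arranged to decide each game, the two games have the same winner, which is precisely what ``equivalent'' means here.

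\medskip

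The main subtlety I anticipate is not conceptual but bookkeeping: I must confirm that each step in the proofs of the three lemmas genuinely avoids using $m_t\in\Q$, and in particular that the infimum defining $\delta_\tau^{\hat m}(i)$ in Lemma~\ref{lemma:delta}, and the approximation choices $m_i<\mu(N_i)$ with $0<m_i-\delta_\tau^\emptyset(i)<\epsilon/4$ in Lemma~\ref{lemma:rosendal}, remain available when $\I$ ranges over all reals — which they do, since real approximations are if anything easier than rational ones. The only place rationality was actively invoked was to rescale $m'_i$ to $m'_i\tfrac{r}{m'_0+m'_1}$ while staying in $\Q$; in the real game this rescaling is automatic. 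Thus the reduction to Theorem~\ref{thm:equiv} is clean, and the equivalence follows without any new combinatorial content.
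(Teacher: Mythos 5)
Your treatment of player $\I$ is correct and coincides with the paper's: a rational strategy for $\I$ is literally legal in $G'(s,A)$, and for the converse you extend Lemma~\ref{lem:ws1} to $G'$ (which indeed uses nothing about rationality) to conclude $\mu_*(A^c)>s$, then invoke the final paragraph of the proof of Theorem~\ref{thm:equiv} to rebuild a rational winning strategy. The genuine gap is in your treatment of player $\II$, where you have the trivial and nontrivial directions backwards. The direction ``$\II$ wins $G'(s,A)\Rightarrow\II$ wins $G(s,A)$'' is the immediate one (restrict the strategy to rational plays by $\I$); routing it through Lemma~\ref{lemma:rosendal} as you do is harmless but unnecessary. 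The direction ``$\II$ wins $G(s,A)\Rightarrow\II$ wins $G'(s,A)$,'' which you dismiss with ``the reverse inclusion of strategies is immediate,'' is not immediate at all: a strategy $\tau$ for $\II$ in the rational game is defined only at positions where $\I$ has played rationals, so it is not a strategy in $G'(s,A)$ --- it has no response when $\I$ plays an irrational value. This is exactly the direction to which the paper devotes its only new argument: $\tau'$ simulates $\tau$ by replacing $\I$'s real moves $(m'_{t\concat 0},m'_{t\concat 1})$ with rational approximations satisfying $(1-\epsilon)m'_{t\concat i}<m_{t\concat i}<m'_{t\concat i}$ (with $\epsilon$ chosen so that $(1-\epsilon)m'_\emptyset>s$, and preserving zeros so the rules are respected), feeding these to $\tau$ and copying its responses; every run of $\tau'$ then corresponds to a winning run of $\tau$.

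The gap is repairable inside your own framework, but you must actually supply the repair: from ``$\II$ wins $G(s,A)$'' you get $\mu^*(A^c)\leq s$ by Theorem~\ref{thm:equiv}, and then you need to observe that the construction of $\II$'s strategy from $\mu^*(A^c)\leq s$ in the third paragraph of that theorem's proof --- choose an open $U\supseteq A^c$ with $\mu(U)<m_0+m_1$ and always select a side with $\mu(U\cap N_u)<m_u$ --- never uses rationality of $\I$'s moves, and hence yields a winning strategy for $\II$ in $G'(s,A)$ directly. As written, your proof asserts a false inclusion claim in place of this (or the paper's simulation) argument, so the direction $\II$ wins $G\Rightarrow\II$ wins $G'$ is unproven. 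A minor further point: the appeal to determinacy at the end of your third paragraph is out of place, since neither game need be determined for a general $A$; ``equivalent'' here just means each player wins one game iff that player wins the other, which is precisely the four implications and requires no dichotomy.
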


\begin{proof}
If $\I$ has a winning strategy in $G(s,A)$ then trivially $\I$ has a winning strategy in $G'(s,A)$.
Likewise if $\II$ has a winning strategy in $G'(s,A)$, then trivially $\II$ has a winning strategy in
$G(s,A)$. If $\I$ has a winning strategy in $G'(s,A)$, then the proof of Theorem~\ref{thm:equiv} shows that
$\I$ has a winning strategy in $G(s,A)$.

Finally, suppose $\II$ has a winning strategy $\tau$ in $G(s,A)$. We get a strategy $\tau'$ for $\II$
in $G'(s,A)$ in a similar manner. If $\I$ makes move $(m'_{t\concat 0}, m'_{t \concat 1})$ in
$G'(s,A)$, then as in Theorem~\ref{thm:equiv} we get approximations $(m_{t\concat 0}, m_{t \concat 1})$
with $m_{t\concat 0}, m_{t \concat 1}  \in \Q$, with $(1-\epsilon) m'_{t\concat i} < m_{t\concat i}
<m'_{t \concat i}$ where $\epsilon>0$ is such that $(1-\epsilon) m'_\emptyset >s$. 
We have $\tau'$ play $\tau(m_{t\concat 0}, m_{t \concat 1})$.
Note here that if $m'_{t\concat i}=0$, then we also have $m_{t\concat i}=0$, so $\tau'$ is following the rules
of the game. Since every run of $\tau'$ has a corresponding run of $\tau$, $\tau'$ is a winning strategy for $\II$.
\end{proof}

\section{Borel-Cantelli and R\'{e}nyi-Lamperti} \label{sec:bc}

In this section we will first give a proof of the classical Borel-Cantelli
theorem using the measure game. We will then use the measure game to give a
new proof of the R\'{e}nyi-Lamperti theorem from probability theory, which
is a significant strengthening of one direction of the Borel-Cantelli theorem.
In this result, the hypothesis of mutual independence of the event is replaced by a
certain inequality. This result has found numerous recent applications to
number theory and related areas (c.f.\ \cite{harmon}).

We recall the statement of the Borel-Cantelli theorem. Let $(X,\sM,\mu)$ be a probability space
on the set $X$. Let $A_i \in \sM$ be $\mu$-measurable sets. One direction of Borel-Cantelli
asserts that if $\sum_i \mu(A_i) <\infty$ then $\mu(E)=1$, where
$E= \bigcup_k \bigcap_\ell A_\ell^c$ is the set of $x$ which are in only finitely
many of the $A_i$. This direction requires no assumption on the $A_i$.
The other direction of Borel-Cantelli asserts that if the $A_i$ are mutually independent
and $\sum_i \mu(A_i)=\infty$, then $\mu(E)=0$, that is, $\mu$-almost all $x$
are in infinitely many of the $A_i$. Recall the $A_i$ are {\em mutually independent}
if for all finite $F\subseteq \omega$, $\mu(\bigcap_{i \in F} A_i)=\prod_{i \in F} \mu(A_i)$.

The next theorem is the first direction of the Borel-Cantelli theorem.
We assume in the hypothesis that $\II$ wins the games $G(s_i,A_i)$, which for $\mu$-measurable
$A_i$ is equivalent to $\mu(A_i) \geq 1-s_i$, that is, $\mu(A_i^c)\leq s_i$.
We assume that $\sum_i s_i<\infty$ and deduce that $\II$
wins the game $G(0,\bigcup_{m=1}^\infty \bigcap_{i=m}^\infty A_i)$, which for measurable $A_i$
is equivalent to $\mu(\bigcup_{m=1}^\infty \bigcap_{i=m}^\infty A_i)=1$, that is,
almost all $x$ are in only finitely many of the $A_i^c$.

\begin{theorem}
Let $s_i \in [0, 1)$ such that $\sum_i s_i < \infty$. Let $\setof{A_i
    \suchthat i \in \omega}$ be a sequence of subsets of $2^\omega$
  such that $\II$ wins $G(s_i, A_i)$.  Then $\II$ wins $G(0,
  \bigcup_{m=1}^\infty \bigcap_{i=m}^\infty A_i)$
\end{theorem}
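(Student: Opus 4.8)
The plan is to reduce the statement to the tail intersections $B_m = \bigcap_{i=m}^\infty A_i$ and then to assemble a single strategy for $\II$ in the $G(0,\cdot)$ game out of winning strategies for these tails. Write $\sigma_m = \sum_{i=m}^\infty s_i$; since $\sum_i s_i < \infty$ we have $\sigma_m \to 0$, so in particular $\sigma_m < 1$ for all sufficiently large $m$. First I would apply the countable additivity corollary (Corollary~\ref{lemma:countadd}) to the family $\setof{A_i \suchthat i \geq m}$ with weights $\setof{s_i \suchthat i \geq m}$: since $\II$ wins each $G(s_i, A_i)$ and $\sum_{i \geq m} s_i = \sigma_m < 1$, this yields, for each such $m$, a winning strategy $\tau_m$ for $\II$ in $G(\sigma_m, B_m)$.

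The key observation is that the games $G(s,A)$ differ, as $s$ varies, only in $\I$'s threshold rule $m_0 + m_1 > s$; all of $\I$'s other rules and all of $\II$'s rules are identical. I would then define $\II$'s strategy $\tau$ in $G(0, E)$, where $E = \bigcup_{m=1}^\infty B_m$, as follows. Player $\I$'s opening move $(m_0, m_1)$ satisfies $0 < m_0 + m_1 < 1$, the lower bound being the rule for $G(0,\cdot)$ and the upper bound coming from the strict rule $m_\emptyset < \mu(N_\emptyset) = 1$. Put $c = m_0 + m_1$ and fix any $m$ large enough that $\sigma_m < c$ (and hence also $\sigma_m < 1$); such an $m$ exists because $\sigma_m \to 0$ and $c > 0$. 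From this point on, $\II$ plays according to $\tau_m$, beginning with $x_0 = \tau_m(m_0,m_1)$.

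It remains to verify that $\tau$ wins. Consider any run of $G(0,E)$ in which $\I$ obeys the rules and $\II$ follows $\tau$. Because $c > \sigma_m$ and the two games agree on every other rule, this run is simultaneously a legal run of $G(\sigma_m, B_m)$ in which $\II$ follows $\tau_m$; since $\tau_m$ is winning, the resulting branch $x$ lies in $B_m \subseteq E$, so $\II$ wins the run of $G(0,E)$. Hence $\tau$ is a winning strategy for $\II$ in $G(0, E)$.

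The only genuinely delicate point is the matching of legal positions between the two games, i.e.\ the claim that a rule-abiding $\I$-play in $G(0,E)$ whose opening total exceeds $\sigma_m$ is verbatim a rule-abiding $\I$-play in $G(\sigma_m, B_m)$; this is exactly what lets $\II$ inherit $\tau_m$'s guarantee. I expect this bookkeeping, together with confirming that the tail index $m$ may legitimately be chosen after seeing only $\I$'s first move, to be the main thing to get right; everything else is a direct application of Corollary~\ref{lemma:countadd}. One could alternatively route the argument through Theorem~\ref{thm:equiv}, observing that $\mu^*(E^c) \leq \mu^*(B_m^c) \leq \sigma_m$ for all large $m$ and hence $\mu^*(E^c)=0$, but the strategy above is more in keeping with the constructive spirit of the paper.
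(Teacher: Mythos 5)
Your proposal is correct and is essentially the paper's own argument: both wait for $\I$'s opening move $(m_0,m_1)$, choose a tail index $m$ with $\sum_{i\geq m} s_i < m_0+m_1$, invoke Corollary~\ref{lemma:countadd} to get a winning strategy for $\II$ in $G\bigl(\sum_{i\geq m} s_i, \bigcap_{i\geq m} A_i\bigr)$, and then follow that strategy verbatim, using that the two games differ only in $\I$'s first-move threshold. Your write-up in fact spells out the rule-matching and the $\sigma_m<1$ hypothesis of the corollary more explicitly than the paper, which compresses these into ``it is not hard to see.''
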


\begin{proof}
We will describe how to win $G(0, \bigcup_{m=1}^\infty
\bigcap_{i=m}^\infty A_i)$ as player $\II$.  Suppose $\I$ begins with
some move $(m_0,m_1)$ such that $m_0+m_1 = \epsilon>0$.  Since
$\sum_{i \in \omega} s_i < \infty$, there exists $n$ large enough so
that $\sum_{i \geq n} s_i < \epsilon$.  Since $\II$ wins $G(s_i, A_i)$
for any $i$, $\II$ wins in particular for $i \geq n$.  So by Corollary
\ref{lemma:countadd}, $\II$ wins $G\left(\sum_{i \geq n} s_i,
\bigcap_{i=n}^\infty A_i\right)$, say with some strategy $\tau$.
Since $m_0+m_1=\epsilon > \sum_{i \geq n} s_i$, we have that $(m_0,m_1)$ is a legal
first move for $\I$ in $G\left(\sum_{i \geq n} s_i,
\bigcap_{i=n}^\infty A_i\right)$. It
is not hard to see that if we play according to $\tau$ from this point
on, we will win $G(0, \bigcup_{m=1}^\infty \bigcap_{i=m}^\infty A_i)$.
\end{proof}

We now state the mutual independence hypothesis in terms of the measure game.
If $\II$ wins the games $G(s_i,A_i^c)$ for $i \in F$ ($F$ a finite subset of $\omega$)
then (for measurable $A_i$)
we have that $\mu(A_i^c) \geq 1-s_i$, that is, $\mu(A_i) \leq s_i$.
The definition requires that $\II$ wins
$G(\prod_{i \in F} s_i, \bigcup_{i \in F} A_i^c)$, which is equivalent to
$\mu(\bigcap_i A_i) \leq \prod_{i\in F} s_i$. The definition has a similar requirement
for player $\I$, which for measurable $A_i$ becomes 
$\mu(\bigcap_i A_i) \geq \prod_{i\in F} s_i$, and thus becomes the usual
definition. We note that for the proof of Theorem~\ref{thm:bcdiv} (the second direction
of Borel-Cantell) using the measure game, only the direction 
$\mu(\bigcap_i A_i) \leq \prod_{i\in F} s_i$ is required.

\begin{definition}
Let $\mathcal{I}$ be a set. Let $\setof{A_i \suchthat i \in
  \mathcal{I}}$ with $A_i \subseteq 2^\omega$ for all $i \in
\mathcal{I}$.  Then $\setof{A_i \suchthat i \in \mathcal{I}}$ are
\emph{mutually independent} if for any finite subset $F \subseteq
\mathcal{I}$, if player $P$ (either $\I$ or $\II$)  wins $G(s_i, A_i^c)$ for all $i \in F$,
then $P$ wins $G(\prod_{i \in F} s_i, \bigcup_{i \in F} A_i^c)$.
\end{definition}

The next theorem is the second direction of the Borel-Cantelli theorem stated in terms
of the measure game. The hypothesis that $\I$ wins the games $G(s_i,A_i)$, for measurable $A_i$,
gives that $\mu(A_i^c) > s_i$. We assume $\sum_i s_i=\infty$ and the $A_i$ are mutually independent,
and conclude that 
for any  $\epsilon>0$, $\I$ wins $G(1-\epsilon, \bigcup_{m=0}^\infty \bigcap_{i=m}^\infty A_i)$,
which gives that $\mu((\bigcup_{m=0}^\infty \bigcap_{i=m}^\infty A_i)^c)=1$, and thus
$\mu$-measure one many $x$ are in infinitely many of the $A_i^c$.

\begin{theorem} \label{thm:bcdiv}
  Suppose $\setof{A_i \suchthat i \in \omega}$ are mutually independent,
  and let $s_i \in [0, 1)$ such that $\sum_i s_i = \infty$.  Then if $\I$ wins $G(s_i, A_i)$ for all $i$, 
    then for any  $\epsilon>0$, $\I$ wins $G(1-\epsilon, \bigcup_{m=0}^\infty \bigcap_{i=m}^\infty A_i)$.
\end{theorem}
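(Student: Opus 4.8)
The plan is to use Theorem~\ref{thm:equiv} to pass freely between the game and measure, carry out the classical second Borel--Cantelli estimate at the level of the outer measure $\mu^*$ using the mutual independence clause, and then translate the resulting inequality back into a win for $\I$. Write $D_m = \bigcap_{i=m}^\infty A_i$ and $E = \bigcup_{m=0}^\infty D_m$. By Theorem~\ref{thm:equiv}, $\I$ winning $G(1-\epsilon, E)$ is equivalent to $\mu_*(E^c) > 1-\epsilon$, and since $\mu_*(E^c) = 1 - \mu^*(E)$ it suffices to prove $\mu^*(E) = 0$; this settles all $\epsilon > 0$ at once.

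First I would unwind the hypothesis. By Theorem~\ref{thm:equiv}, $\I$ winning $G(s_i, A_i)$ says $\mu_*(A_i^c) > s_i$, i.e.\ $\mu^*(A_i) < 1 - s_i$. For each $i$ I would fix a parameter $t_i \in [\mu^*(A_i), 1)$ with $t_i \le 1 - s_i$: when $s_i > 0$ take $t_i = 1 - s_i$, and when $s_i = 0$ take any $t_i \in (\mu^*(A_i), 1)$, which exists since $\mu^*(A_i) < 1$. Then $\mu^*\big((A_i^c)^c\big) = \mu^*(A_i) \le t_i$, so by Theorem~\ref{thm:equiv} player $\II$ wins $G(t_i, A_i^c)$ for every $i$.

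Next I would invoke mutual independence with $P = \II$: for each finite set $F = \{m, \dots, n\}$, since $\II$ wins every $G(t_i, A_i^c)$ with $i \in F$, $\II$ wins $G(\prod_{i=m}^n t_i, \bigcup_{i=m}^n A_i^c)$, which by Theorem~\ref{thm:equiv} unpacks to
\[
\mu^*\!\Big(\bigcap_{i=m}^n A_i\Big) \le \prod_{i=m}^n t_i .
\]
Since $t_i \le 1 - s_i \le e^{-s_i}$, the right-hand side is at most $\exp(-\sum_{i=m}^n s_i)$, which tends to $0$ as $n \to \infty$ because $\sum_i s_i = \infty$ forces every tail to diverge. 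As $D_m \subseteq \bigcap_{i=m}^n A_i$ for all $n$, monotonicity of $\mu^*$ gives $\mu^*(D_m) = 0$, and then countable subadditivity gives $\mu^*(E) \le \sum_m \mu^*(D_m) = 0$. Equivalently, staying inside the game, $\mu^*(D_m) = 0$ means $\II$ wins $G(0, D_m^c)$, so Corollary~\ref{lemma:countadd} (with all parameters $0$) gives that $\II$ wins $G(0, \bigcap_m D_m^c) = G(0, E^c)$, and Corollary~\ref{lemma:playerswap} then delivers that $\I$ wins $G(1-\epsilon, E)$ for every $\epsilon > 0$. The main obstacle is conceptual rather than computational: recognizing that the game-theoretic independence clause yields exactly the product bound $\mu^*(\bigcap A_i) \le \prod(1-s_i)$ needed for the classical argument, and applying it in the correct direction (for $\II$, with the flipped parameters $1-s_i$), as the surrounding discussion anticipates; once that is in place, the tail-product estimate and the decomposition of the $\limsup$-type set $E$ into countably many $\mu^*$-null sets $D_m$ are routine.
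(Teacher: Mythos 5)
Your proof is correct, but it takes a genuinely different route from the paper's. You translate the entire problem into outer-measure language via Theorem~\ref{thm:equiv} and run the classical second Borel--Cantelli argument there: apply the independence clause to the finite tails $\{m,\dots,n\}$, let $n\to\infty$ to conclude $\mu^*\bigl(\bigcap_{i=m}^\infty A_i\bigr)=0$ for each $m$ by monotonicity, and then use countable subadditivity of $\mu^*$ to get $\mu^*(E)=0$, which handles every $\epsilon>0$ at once. The paper instead keeps the argument phrased at the level of games: it uses Corollary~\ref{lemma:playerswap} to convert $\I$'s wins into wins for $\II$ in $G(1-s_i,A_i^c)$, partitions $\omega$ into disjoint finite blocks $[L_k,M_k]$ chosen so that $\prod_{i=L_k}^{M_k}(1-s_i)<\epsilon_k$ with $\sum_k\epsilon_k<\epsilon$, applies independence blockwise, combines the blocks with Corollary~\ref{lemma:countadd}, swaps players back, and finishes with an inclusion of payoff sets; the auxiliary set it builds depends on $\epsilon$. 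Since the paper's corollaries are themselves proved from Theorem~\ref{thm:equiv}, both arguments ultimately factor through the same equivalence, and the real difference is the decomposition: your nested tails plus a monotone limit versus the paper's disjoint blocks plus countable additivity. Your version is shorter, proves the stronger uniform statement $\mu^*(E)=0$, and your device of replacing $1-s_i$ by parameters $t_i<1$ cleanly covers the case $s_i=0$ --- a case the paper's first step technically glosses over, since Corollary~\ref{lemma:playerswap} requires $s>0$ and $G(1,A_i^c)$ is not a legal game (though one can repair this by discarding the indices with $s_i=0$, which does not affect divergence of the sum). What the paper's version buys is that the proof reads as a sequence of game manipulations, in keeping with its stated goal of deriving measure theory directly from the game.
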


\begin{proof}
Note that since $\sum_{i \in \omega} s_i = \infty$ then $\prod_{i \in \omega} (1-s_i) = 0$.  By Corollary \ref{lemma:playerswap} $\II$ wins $G(1-s_i, A_i^c)$ for every $i$, and by the mutual independence of the sequence $\setof{A_i \suchthat i \in \omega}$, for any $L \leq M$, $\II$ wins $G\left(\prod_{i=L}^M (1-s_i), \bigcup_{i=L}^M A_i^c\right)$.  Let $\epsilon>0$ and choose a sequence of positive reals $\setof{\epsilon_k \suchthat k \in \omega}$ such that $\sum_{k \in \omega} \epsilon_k < \epsilon$.  Define sequences $\setof{L_k}$, $\setof{M_k}$ as follows:

Let $L_0 = 0$ and $M_0$ be large enough so that $\prod_{i=L_0}^{M_0} (1-s_i) < \epsilon_0$, and let $L_{k+1} = M_k + 1$ and $M_{k+1}$ be large enough so that $\prod_{i=L_{k+1}}^{M_{k+1}} (1-s_i) < \epsilon_{k+1}$.

Then we note that for any $k$, $\II$ wins $G\left(\prod_{i=L_{k}}^{M_{k}} (1-s_i), \bigcup_{i=L_k}^{M_k} A_i^c\right)$, and so $\II$ wins $G\left(\epsilon_k, \bigcup_{i=L_k}^{M_k} A_i^c\right)$.  So by Corollary \ref{lemma:countadd}, $\II$ wins $G\left(\sum_{k \in \omega} \epsilon_k, \bigcap_{k \in \omega}\bigcup_{i=L_k}^{M_k} A_i^c\right)$, and since $\sum_{k \in \omega} \epsilon_k < \epsilon$, by Corollary \ref{lemma:playerswap}, $\I$ wins $G\left(1-\epsilon, \bigcup_{k \in \omega}\bigcap_{i=L_k}^{M_k} A_i\right)$.

This implies $\I$ wins $G\left(1-\epsilon, \bigcup_{m=0}^\infty \bigcap_{i=m}^{\infty} A_i\right)$ since $\left(\bigcup_{k \in \omega}\bigcap_{i=L_k}^{M_k} A_i\right)^c \subseteq \left(\bigcup_{m=0}^\infty \bigcap_{i=m}^{\infty} A_i\right)^c$.
\end{proof}

We now show how the measure game can be used
to give a different, perhaps more constructive, proof of the R\'{e}nyi-Lamperti theorem.
A weaker form of the theorem, in which $\mu( \bigcap_n \bigcup_{i\geq n} A_i)$
is just asserted to be positive, was proved by
Erd\"{o}s and R\'{e}nyi \cite{erdos-renyi}. Lamperti \cite{lamperti}
also proved a somewhat different version of that theorem (with slightly different
hypothesis, but the same conclusion). The current stronger form of the theorem
was proved by Kochen and Stone \cite{kochen-stone} and Spitzer \cite{spitzer}.
Petrov \cite{petrov} gives a presentation in modern terminology and
proves other related results. 

Our proof is more constructive in the sense that it gives an explicit strategy
for getting into the desired set.
The version we prove is in the context of Borel probability measures on Polish spaces.
The restriction to Polish spaces seems necessary in order to obtain the more
constructive version we present. 
We recall the general statement of the theorem.

\begin{theorem}[R\'{e}nyi-Lamperti]  \label{lem:rl}
Let $(X,\sM,\mu)$ be a probability space and let $\{ A_i\}_{i \in \omega}$ be a sequence of
events $A_i\in \sM$ with $\sum_i \mu(A_i)=+\infty$ and
$\liminf_n \frac{\sum_{ i,j\leq n} \mu(A_i \cap A_j) }{(\sum_{i \leq n} \mu(A_i))^2} \leq D$,
then $\mu( \bigcap_n \bigcup_{i\geq n} A_i)\geq \frac{1}{D}$.
\end{theorem}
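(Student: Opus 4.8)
The plan is to show that for every $\epsilon>0$ player $\II$ wins the game $G(1-\tfrac{1}{D}+\epsilon, C)$, where $C=\bigcap_n\bigcup_{i\geq n}A_i$ is the set of points lying in infinitely many $A_i$. Since $C$ is $\mu$-measurable, Theorem~\ref{thm:equiv} then gives $\mu(C)=\mu_*(C)\geq \tfrac{1}{D}-\epsilon$ for every $\epsilon$, hence $\mu(C)\geq\tfrac{1}{D}$. (Transporting to $2^\omega$ by the Borel isomorphism of the introduction, I may assume $X=2^\omega$.) Throughout write $B_m=\bigcup_{i\geq m}A_i$, so the $B_m$ are decreasing with $\bigcap_m B_m=C$, and note that any $m_k\to\infty$ still gives $\bigcap_k B_{m_k}=C$.

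The quantitative engine is the second-moment (Cauchy--Schwarz) bound: applying Cauchy--Schwarz to $S_F=\sum_{i\in F}\mathbf{1}_{A_i}$ and the indicator of $\{S_F>0\}=\bigcup_{i\in F}A_i$ gives, for any finite index set $F$,
\[
\mu\Big(\bigcup_{i\in F}A_i\Big)\;\geq\;\frac{\big(\sum_{i\in F}\mu(A_i)\big)^2}{\sum_{i,j\in F}\mu(A_i\cap A_j)}.
\]
First I would observe that the hypothesis is tail-invariant: since the first-moment sum $\sum_{i\leq n}\mu(A_i)$ diverges, deleting the finitely many indices below $M$ changes it only by a constant and changes the second-moment sum only by a lower-order term, so $\liminf_n$ of the ratio over $M\leq i,j\leq n$ remains $\leq D$. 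Taking $n\to\infty$ with $F=\{M,\dots,n\}$ and using $\bigcup_{M\leq i\leq n}A_i\uparrow B_M$, the displayed bound yields $\mu(B_M)\geq\tfrac{1}{D}$ for every $M$. Consequently, by Theorem~\ref{thm:equiv}, $\II$ has a winning strategy $\tau_M$ in $G(s_M,B_M)$ for any $s_M>1-\mu(B_M)$, and in particular for $s_M$ slightly above $1-\tfrac1D$.

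The heart of the argument is to fuse the $\tau_M$ into a single construction landing in $C$, which is the game-theoretic incarnation of the continuity of $\mu$ from above. The naive route through Corollary~\ref{lemma:countadd} fails, since the per-set budgets $s_M\approx 1-\tfrac1D$ do not sum below one; what rescues the construction is the nesting $B_1\supseteq B_2\supseteq\cdots$ together with convergence of the decreasing sequence $\mu(B_m)$. Concretely, I would fix $\epsilon_k>0$ with $\sum_k\epsilon_k<\epsilon$, pass to a sparse subsequence $m_1<m_2<\cdots$ with $\mu(B_{m_{k-1}})-\mu(B_{m_k})<\epsilon_{k-1}$, and then run the construction of Lemma~\ref{lemma:rosendal} for the strategies $\tau_{m_k}$ to produce a decreasing sequence of trees $T_1\supseteq T_2\supseteq\cdots$: here $T_1$ is carved from $\tau_{m_1}$ with $[T_1]\subseteq B_{m_1}$, and $T_k$ refines $T_{k-1}$ using $\tau_{m_k}$ so that every surviving branch enters $B_{m_k}$. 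Because $[T_{k-1}]\subseteq B_{m_{k-1}}$ and $\mu(B_{m_{k-1}}\setminus B_{m_k})<\epsilon_{k-1}$, each refinement discards at most $\epsilon_{k-1}$ of measure; hence the limiting closed set $[T]=\bigcap_k[T_k]$ satisfies $\mu([T])\geq \mu(B_{m_1})-\sum_k\epsilon_k\geq \tfrac1D-\epsilon$, while every branch lies in $\bigcap_k B_{m_k}=C$. This closed subset of $C$ witnesses $\mu_*(C)\geq\tfrac1D-\epsilon$, reproving continuity from above along the $B_m$ as a byproduct and, in the ``constructive'' spirit of the paper, exhibiting an explicit scheme for steering a branch into infinitely many $A_i$.

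I expect the fusion step to be the main obstacle. The delicate point is to realize the successive refinements not merely as a measure estimate but through the node values $m_t$ along the frontier of the tree, in the style of Lemma~\ref{lemma:rosendal} and the $\delta_\tau^{\hat m}$ of Lemma~\ref{lemma:delta}, so that the finite-level measure approximations converge, the total budget lost over all stages is controlled by $\sum_k\epsilon_k$, and no branch is ever forced onto a node of $\mu$-measure zero. Unlike the two Borel--Cantelli theorems above, where the relevant intersection is dispatched by a single application of Corollary~\ref{lemma:countadd}, the passage to the $\limsup$ here genuinely requires the decreasing structure of the $B_m$, and it is precisely this $m_t$-tracked tree construction that converts the classical second-moment bound into an explicit strategy.
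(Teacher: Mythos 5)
Your proposal is correct, but it takes a genuinely different route from the paper's --- in fact it is essentially the classical Kochen--Stone argument that the paper deliberately sets out to avoid. Your core steps are: the Cauchy--Schwarz second-moment bound $\mu\left(\bigcup_{i\in F}A_i\right)\geq \left(\sum_{i\in F}\mu(A_i)\right)^2/\sum_{i,j\in F}\mu(A_i\cap A_j)$; the tail-invariance of the hypothesis (valid, since $\sum_{i\leq n}\mu(A_i)\to\infty$ makes the deleted head terms lower order); hence $\mu(B_M)\geq \frac1D$ for every tail union $B_M=\bigcup_{i\geq M}A_i$; and then continuity of $\mu$ from above. These four steps already prove the theorem in an arbitrary probability space, with no game anywhere. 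The paper instead constructs a single explicit winning strategy for $\II$ in $G(1-\frac1D,A)$: it reduces to closed $A_i$, maintains as a game invariant a version of the second-moment hypothesis localized to the current node (inequalities~\ref{eqn:inha} and \ref{eqn:inhb}, involving $\mu(N_t)-m_t$ and the committed blocks $\bigcap_{k<|t|}A_{[\ell_k,\ell_{k+1})}$), uses Lemma~\ref{lem:side} --- whose engine, Lemma~\ref{lem:numsplit}, is itself a Cauchy--Schwarz-type inequality --- to choose a side after each of $\I$'s moves, and uses Lemma~\ref{lem:approx} to commit to finite blocks $A_{[\ell_k,\ell_{k+1})}$ that closedness then forces the limit point to enter. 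What your route buys: brevity, generality beyond Polish spaces, and standard tools. What the paper's route buys: an explicit strategy, and hence explicit trees of points landing in infinitely many $A_i$ --- the ``constructive'' content that is the stated purpose of the section --- without invoking the classical facts (regularity, continuity from above) that the game machinery is meant to replace.

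One concrete caveat about the step you call the heart of the argument: the fusion of the strategies $\tau_{m_k}$, as sketched, does not work. The tree that Lemma~\ref{lemma:rosendal} extracts from $\tau_{m_k}$ bears no relation to the previously built tree $T_{k-1}$: if $T'$ is carved from $\tau_{m_k}$, the intersection $[T_{k-1}]\cap[T']$ is only guaranteed measure $\mu([T_{k-1}])+\mu([T'])-1$, and since each budget is near $1-\frac1D$ this degrades additively and is vacuous already for $D\geq 2$. Your claim that ``each refinement discards at most $\epsilon_{k-1}$'' is true only if $T_k$ is chosen as a closed subset of nearly full measure inside the measurable set $[T_{k-1}]\cap B_{m_k}$, which is inner regularity of $\mu$ (equivalently, Theorem~\ref{thm:equiv} together with Lemma~\ref{lem:ws1} applied to that set), not an application of the strategy $\tau_{m_k}$ for the unconditioned game. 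This flaw does not affect the correctness of your proof, because the step it replaces is exactly continuity of $\mu$ from above, which your classical steps already supply; but it does mean the game-theoretic portion of your writeup is decorative rather than structural, whereas in the paper the game carries the entire proof.
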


We note that if the $A_i$ are mutually independent then we may take $D=1$
in the hypothesis of the theorem, and thereby recover the
statement of the divergence part of the Borel-Cantelli theorem.

We prove two key lemmas, Lemmas~\ref{lem:side} and \ref{lem:approx}  which will enable us to apply the measure game.
Lemma~\ref{lem:side} will allow player $\II$ to pick an appropriate side in each round of the measure game,
and Lemma~\ref{lem:approx} will allow $\II$ to commit to meeting certain finite intersections
of the $A_i$ sets (while maintaining some hypotheses). 

First we show a simple combinatorial lemma we need for Lemma~\ref{lem:side}.

\begin{lemma} \label{lem:numsplit}
Suppose $\{ a_i\}_{i<n}$, $\{ b_i\}_{i<n}$, $\{ c_i\}_{i<n}$ are non-negative real numbers
with $b_i>0$ and $\sum_i c_i=1$. Then $\min_i \{ \frac{a_i}{b_i^2} c_i\}
\leq \frac{ \sum_i a_i}{( \sum_i b_i)^2}$.
\end{lemma}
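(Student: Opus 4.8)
The plan is to bound the minimum from above by extracting a uniform lower bound on each $a_i$ and then invoking the Cauchy--Schwarz inequality in its Engel (``Titu'') form.

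First I would dispose of the degenerate case. If $c_i = 0$ for some $i$, then the term $\frac{a_i}{b_i^2} c_i$ equals $0$, so $\min_i \frac{a_i}{b_i^2} c_i \le 0$, while the right-hand side $\frac{\sum_i a_i}{(\sum_i b_i)^2}$ is nonnegative (the $a_i$ are nonnegative and $\sum_i b_i > 0$ since each $b_i > 0$). Hence the inequality holds trivially, and I may assume $c_i > 0$ for every $i$.

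Now set $m = \min_i \frac{a_i}{b_i^2} c_i$. By definition of the minimum, $\frac{a_i}{b_i^2} c_i \ge m$ for each $i$, which rearranges — using $b_i > 0$ and $c_i > 0$ — to $a_i \ge m \frac{b_i^2}{c_i}$. Summing over $i$ gives $\sum_i a_i \ge m \sum_i \frac{b_i^2}{c_i}$. The one substantive step is to lower-bound the sum on the right: by Cauchy--Schwarz in Engel form, $\sum_i \frac{b_i^2}{c_i} \ge \frac{(\sum_i b_i)^2}{\sum_i c_i} = (\sum_i b_i)^2$, where the last equality uses the hypothesis $\sum_i c_i = 1$. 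Combining these two inequalities yields $\sum_i a_i \ge m (\sum_i b_i)^2$, and dividing through by $(\sum_i b_i)^2 > 0$ gives $m \le \frac{\sum_i a_i}{(\sum_i b_i)^2}$, which is exactly the claim.

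The only real content is the Engel-form inequality, which itself is just Cauchy--Schwarz applied to the vectors $(b_i/\sqrt{c_i})_i$ and $(\sqrt{c_i})_i$; everything else is bookkeeping. I expect the main (and rather minor) obstacle to be correctly handling the case where some $c_i = 0$ and checking the positivity conditions carefully, so that every division and the final rearrangement are justified.
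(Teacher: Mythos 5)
Your proof is correct, but it takes a genuinely different route from the paper's. The paper fixes the minimizing index (WLOG $i=0$), cross-multiplies the minimality inequalities to get $a_0 c_0 b_i^2 \le a_i b_0^2 c_i$, divides by $a_i$ (which forces its preliminary case split on whether some $a_i=0$), sums over $i\neq 0$ to solve for $c_0$, and thereby reduces the lemma to the inequality $(\sum_i b_i)^2 \le (\sum_i a_i)(\sum_i b_i^2/a_i)$ --- a Cauchy--Schwarz instance with the $a_i$ as weights --- which it then proves from scratch via the sum-of-squares identity $\sum_{i<j}(a_i b_j - a_j b_i)^2/(a_i a_j)\ge 0$. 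You instead keep the $c_i$ in play: from $m \le \frac{a_i}{b_i^2}c_i$ you get $a_i \ge m\, b_i^2/c_i$ (your case split is on $c_i=0$ rather than $a_i=0$), sum over $i$, and invoke Engel-form Cauchy--Schwarz with the $c_i$ as weights, $\sum_i b_i^2/c_i \ge (\sum_i b_i)^2/\sum_i c_i$, so the normalization $\sum_i c_i = 1$ enters exactly where it belongs. Your argument is shorter, treats all indices symmetrically (no WLOG), and makes the role of the hypothesis transparent, at the cost of quoting Cauchy--Schwarz; the paper's version is self-contained, deriving its Cauchy--Schwarz instance by hand. One small point worth making explicit in your write-up: chaining $\sum_i a_i \ge m\sum_i b_i^2/c_i$ with $\sum_i b_i^2/c_i \ge (\sum_i b_i)^2$ to conclude $\sum_i a_i \ge m(\sum_i b_i)^2$ requires $m \ge 0$, which holds because every term $\frac{a_i}{b_i^2}c_i$ is nonnegative.
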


\begin{proof}
If any of the $a_i$ are equal to $0$, then the result trivially holds,
so we assume $a_i>0$ for all $i$. 
Suppose without loss of generality the minimum occurs at $i=0$, that is $a_0 c_0 b_i^2 \leq a_i b_0^2 c_i$
for all $i$. 
Thus, $\frac{a_0 c_0 b_i^2}{a_i} \leq b_0^2 c_i$, and summing over $i\neq 0$ we get
\[
\sum_{i \neq 0} \frac{a_0 c_0 b_i^2}{a_i} \leq b_0^2 (\sum_{i \neq 0} c_i)=b_0^2 (1-c_0).
\]
Solving for $c_0$ we have $c_0 \leq \frac{b_0^2}{\sum_{i\neq 0} \frac{a_0 b_i^2}{a_i} +b_0^2}
=\frac{b_0^2}{\sum_i \frac{a_0 b_i^2}{a_i}}$.
We wish to show that $a_0 c_0 (\sum_i b_i)^2 \leq b_0^2 \sum_i a_i$. So, it suffices to show that
$a_0 b_0^2 (\sum_i b_i)^2 \leq  b_0^2 (\sum_i a_i) (\sum_i \frac{a_0 b_i^2}{a_i})$, that is
$(\sum_i b_i)^2 \leq (\sum_i a_i)(\sum_i \frac{b_i^2}{a_i})$. 
This is equivalent to showing $\sum_{i \neq j} b_i b_j \leq \sum_{i \neq j} \frac{ a_i b_j^2}{a_j}$,
that is $\sum_{i \neq j} (\frac{ a_i b_j^2}{a_j}-b_i b_j)\geq 0$. But we have
\begin{equation*}
\begin{split}
\sum_{i \neq j} (\frac{ a_i b_j^2}{a_j}-b_i b_j)&= \sum_{i \neq j} \frac{b_j}{a_j} (a_i b_j-a_j b_i)
\\ & = \sum_{i<j} (a_i b_j-a_j b_i) (\frac{b_j}{a_j} -\frac{b_i}{a_i})
\\ & =\sum_{i<j} \frac{(a_ib_j-a_jb_i)^2}{a_i a_j}
\\ & \geq 0.
\end{split}\end{equation*}

\end{proof}

\begin{lemma} \label{lem:side}
Let $(X,\sM,\mu)$ be a probability space and let $\{ B_i\}_{i \in \omega}$ be a sequence of
events $B_i\in \sM$ with $\sum_i \mu(B_i)=\infty$ and 
$\liminf_n \frac{\sum_{ i,j\leq n} \mu(B_i \cap B_j) }{(\sum_{i \leq n} \mu(B_i))^2} (1-m)<1$
for some $m <1$. 
Suppose $X=X_0 \cup X_1$ is a partition of the  set $X$ into $\mu$ measurable sets
and $m=m_0+m_1$ with $m_0,m_1\geq 0$.
Then for some $\ell \in \{ 0,1\}$ we have that
$\sum_i \mu(B_i \cap X_\ell)=\infty$ and
$\liminf_n \frac{\sum_{ i,j\leq n} \mu(B_i \cap B_j \cap X_\ell) }
{(\sum_{i \leq n} \mu(B_i\cap X_\ell))^2} (\mu(X_\ell)-m_\ell)<1$.
\end{lemma}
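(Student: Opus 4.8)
The plan is to reduce the whole statement to the elementary inequality of Lemma~\ref{lem:numsplit}, applied with a two-element index set, one index for each side $X_0,X_1$. For $\ell\in\{0,1\}$ and $n\in\omega$ I would set $a_\ell^{(n)}=\sum_{i,j\le n}\mu(B_i\cap B_j\cap X_\ell)$ and $b_\ell^{(n)}=\sum_{i\le n}\mu(B_i\cap X_\ell)$, and write $R_n=\frac{\sum_{i,j\le n}\mu(B_i\cap B_j)}{(\sum_{i\le n}\mu(B_i))^2}$ for the ratio in the hypothesis. Since $X=X_0\cup X_1$ is a partition, $\mu(B_i\cap B_j)=\mu(B_i\cap B_j\cap X_0)+\mu(B_i\cap B_j\cap X_1)$ and $\mu(B_i)=\mu(B_i\cap X_0)+\mu(B_i\cap X_1)$, so $R_n=\frac{a_0^{(n)}+a_1^{(n)}}{(b_0^{(n)}+b_1^{(n)})^2}$. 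Because $\mu$ is a probability measure, $\mu(X_0)+\mu(X_1)=1$, hence $(\mu(X_0)-m_0)+(\mu(X_1)-m_1)=1-m$. As $m_\ell$ is a legal assignment to $N_{t\concat\ell}$ in the game we have $0\le m_\ell\le\mu(X_\ell)$, so the numbers $c_\ell=\frac{\mu(X_\ell)-m_\ell}{1-m}$ are nonnegative and sum to $1$. Writing $g_\ell(n)=\frac{a_\ell^{(n)}}{(b_\ell^{(n)})^2}(\mu(X_\ell)-m_\ell)$ for the quantity whose $\liminf$ appears in the conclusion, the goal becomes to exhibit a side $\ell$ with $b_\ell^{(n)}\to\infty$ and $\liminf_n g_\ell(n)<1$.

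At any level $n$ with $b_0^{(n)},b_1^{(n)}>0$ I would apply Lemma~\ref{lem:numsplit} with $a_\ell=a_\ell^{(n)}$, $b_\ell=b_\ell^{(n)}$, and the weights $c_\ell$ to get $\min_\ell\frac{a_\ell^{(n)}}{(b_\ell^{(n)})^2}c_\ell\le R_n$, that is, $\min_\ell g_\ell(n)\le R_n(1-m)$ after multiplying through by $1-m$. Since $\sum_i\mu(B_i)=\infty$ and $\mu(B_i)=\mu(B_i\cap X_0)+\mu(B_i\cap X_1)$, at least one of $\sum_i\mu(B_i\cap X_0)$, $\sum_i\mu(B_i\cap X_1)$ diverges; this is the source of the divergence half of the conclusion, and it forces a split into two cases. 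If both sums diverge then $b_\ell^{(n)}\to\infty$ for both $\ell$, so the divergence requirement holds on either side and only the inequality on $g_\ell$ remains. Suppose toward a contradiction it fails on both sides, i.e.\ $\liminf_n g_\ell(n)\ge1$ for $\ell=0,1$; then for every $\epsilon>0$ both $g_0(n),g_1(n)>1-\epsilon$ for all large $n$, whence $\min_\ell g_\ell(n)>1-\epsilon$ eventually and $\liminf_n\min_\ell g_\ell(n)\ge1$, contradicting $\min_\ell g_\ell(n)\le R_n(1-m)$ and the hypothesis $\liminf_n R_n(1-m)<1$. So in this case $g_\ell$ satisfies the required inequality on some side, which also has divergent sum.

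The remaining case is when exactly one side diverges, say $\sum_i\mu(B_i\cap X_0)=\infty$ while $\sum_i\mu(B_i\cap X_1)$ converges; then only $\ell=0$ can serve, and I must secure $\liminf_n g_0(n)<1$ directly rather than through the minimization. Here $b_1^{(n)}$ stays bounded while $b_0^{(n)}=(b_0^{(n)}+b_1^{(n)})-b_1^{(n)}\to\infty$, so $b_0^{(n)}/(b_0^{(n)}+b_1^{(n)})\to1$, and $a_0^{(n)}\le a_0^{(n)}+a_1^{(n)}$ because $a_1^{(n)}\ge0$. Hence
\[
g_0(n)\le R_n\,\frac{(b_0^{(n)}+b_1^{(n)})^2}{(b_0^{(n)})^2}\,(\mu(X_0)-m_0),
\]
and using $\mu(X_0)-m_0\le1-m$ (valid since $\mu(X_1)-m_1\ge0$) together with the fact that the middle factor tends to $1$, I would pass to a subsequence realizing $\liminf_n R_n(1-m)$ to conclude $\liminf_n g_0(n)\le\liminf_n R_n(1-m)<1$, as needed. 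The degenerate possibility $m_\ell=\mu(X_\ell)$ on a divergent side is harmless, since then $g_\ell\equiv0<1$ and that side works at once.

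The main obstacle is exactly the coupling of the two halves of the conclusion: Lemma~\ref{lem:numsplit} hands me a side on which the ratio inequality holds, but a priori this could be the side whose partial sums converge, so that the divergence requirement fails there. The crux is therefore the one-sided case, and the key observation making it work is that a side with convergent $\sum_i\mu(B_i\cap X_\ell)$ contributes only a bounded amount to the denominator $b^{(n)}$ and a nonnegative amount to the numerator $a^{(n)}$, so the ratio on the surviving, divergent side is asymptotically no larger than $R_n$. The standing constraint $m_\ell\le\mu(X_\ell)$, forced by the legality of $\I$'s moves, is what guarantees $c_\ell\ge0$ and thereby licenses the appeal to Lemma~\ref{lem:numsplit}.
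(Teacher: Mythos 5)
Your proof is correct, and in the main case it is the paper's own argument: when both $\sum_i\mu(B_i\cap X_0)$ and $\sum_i\mu(B_i\cap X_1)$ diverge, both you and the paper apply Lemma~\ref{lem:numsplit} with the weights $c_\ell=\frac{\mu(X_\ell)-m_\ell}{1-m}$, your two-liminf contradiction being just a rephrasing of the paper's pigeonhole on the minimizing index along a subsequence realizing the liminf. The genuine difference is the asymmetric case, say $\sum_i\mu(B_i\cap X_1)<\infty$ and $\sum_i\mu(B_i\cap X_0)=\infty$. The paper splits this further according to whether $\sum_{i,j}\mu(B_i\cap B_j\cap X_1)$ is finite or infinite, handling one subcase by arguing the minimizer in Lemma~\ref{lem:numsplit} must eventually land on the divergent side, and the other via a stability fact for liminfs of ratios (bounded perturbations of numerator and denominator). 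Your single estimate
\[
g_0(n)\;\le\; R_n\,\frac{\bigl(b_0^{(n)}+b_1^{(n)}\bigr)^2}{\bigl(b_0^{(n)}\bigr)^2}\,\bigl(\mu(X_0)-m_0\bigr),
\]
which uses only $a_1^{(n)}\ge 0$ and the boundedness of $b_1^{(n)}$, subsumes both subcases at once: it is shorter, needs no auxiliary liminf lemma, and also handles the corner where the convergent side has weight zero ($m_1=\mu(X_1)$), in which case the paper's ``minimizer eventually lies on the divergent side'' reasoning is unavailable since that side's weighted ratio is identically zero. One caveat applies to both proofs equally: the constraint $m_\ell\le\mu(X_\ell)$ is not among the lemma's stated hypotheses, yet it is indispensable --- it makes the $c_\ell$ nonnegative as Lemma~\ref{lem:numsplit} requires, and gives $\mu(X_0)-m_0\le 1-m$. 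You import it from the legality of player $\I$'s moves, which is exactly how the lemma is invoked in the proof of Theorem~\ref{lem:rl}; the paper uses it tacitly, so your making it explicit is an improvement rather than a gap.
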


\begin{proof}
If for both $\ell=0$ and $\ell=1$ we have that $\sum_i \mu(B_i \cap X_\ell)=\infty$,
then we simply apply Lemma~\ref{lem:numsplit} as follows.
Let $n_0<n_1<\cdots$ be an increasing sequence such that
$\lim_k \frac{ \sum_{i,j \leq n_k} \mu(B_i\cap B_j)}{(\sum_{i <n_k} \mu(B_i))^2}
=\liminf_n \frac{ \sum_{i,j \leq n} \mu(B_i\cap B_j)}{(\sum_{i <n} \mu(B_i))^2}$.
For each $k$, apply Lemma~\ref{lem:numsplit} with $n=2$, $c_0=\frac{\mu(X_0)-m_0}{1-m}$,
$c_1=\frac{\mu(X_1)-m_1}{1-m}$, $a_0=\sum_{i,j \leq n_k} \mu(B_i\cap B_j\cap X_0)$,
$a_1=\sum_{i,j \leq n_k} \mu(B_i\cap B_j\cap X_1)$, $b_0=\sum_{i \leq n_k} \mu(B_i\cap X_0)$,
and $b_1=\sum_{i \leq n_k} \mu(B_i\cap X_1)$. This gives an $\ell_k\in \{ 0,1\}$
such that $\frac{a_{\ell_k}}{b_{\ell_k}^2} c_{\ell_k}
\leq \frac{ \sum_{i,j \leq n_k} \mu(B_i\cap B_j)}{(\sum_{i <n_k} \mu(B_i))^2}$.
So, $\frac{a_{\ell_k}}{b_{\ell_k}^2} (\mu(X_\ell)-m_\ell)
\leq \frac{ \sum_{i,j \leq n_k} \mu(B_i\cap B_j)}{(\sum_{i <n_k} \mu(B_i))^2}(1-m)<1$.
We may therefore fix an $\ell \in \{0,1\}$ such that $\ell_k=\ell$
for infinitely many $k$, and this $\ell$ satisfies the lemma.

Suppose now without loss of generality that $\sum_i \mu(B_i \cap X_0)<\infty$.
We show in this case that we have
$\liminf_n \frac{\sum_{ i,j\leq n} \mu(B_i \cap B_j\cap X_1) }
{(\sum_{i \leq n} \mu(B_i\cap X_1))^2} (\mu(X_\ell)-m_1)<1$. We consider two cases.
First suppose that $\sum_{i,j} \mu(B_i \cap B_j \cap X_0) =\infty$.
In this case $\lim_n  \frac{\sum_{ i,j\leq n} \mu(B_i \cap B_j\cap X_0) }
{(\sum_{i \leq n} \mu(B_i\cap X_0))^2} =\infty$. Thus, for large enough $k$,
the $\ell_k$ of the previous paragraph will all be equal to $1$,
and thus $\ell=1$ satisfies the lemma.
Suppose next that $\sum_{i,j} \mu(B_i \cap B_j \cap X_0) <\infty$.
In this case we have that
$\liminf_n \frac{\sum_{ i,j\leq n} \mu(B_i \cap B_j\cap X_1) }
{(\sum_{i \leq n} \mu(B_i\cap X_1))^2} (\mu(X_\ell)-m_\ell) < 1$.
If  $\sum_{i,j} \mu(B_i \cap B_j \cap X_1) <\infty$ then the $\liminf$ is
$0$ as the denominator tends to infinity [In fact it is true,
as is shown in the proof of the next lemma, that for all sequences of events that
the numerator is always at least as large as the denominator.  So
here the numerator must also tend to infinity. We don't need this
fact here, but it will be used in the main proof.] 
In the case where both the numerators and denominators tend to infinity,
this follows from the simple fact that
if $x_n$ and $z_n$ are bounded, $y_n$ and $w_n$ go to $\infty$, then
$\liminf_n \frac{x_n+y_n}{(z_n+w_n)^2}=\liminf_n \frac{y_n}{w_n^2}$. 
Thus we have that 
$\liminf_n \frac{\sum_{ i,j\leq n} \mu(B_i \cap B_j\cap X_1) }
{(\sum_{i \leq n} \mu(B_i\cap X_1))^2}=
\liminf_n \frac{\sum_{ i,j\leq n} \mu(B_i \cap B_j) }
{(\sum_{i \leq n} \mu(B_i))^2}$, and we also have $\mu(X_\ell)-m_\ell \leq 1-m$, and
the result follows.

\end{proof}

\begin{lemma} \label{lem:approx}
Let $(X,\sM,\mu)$ be a probability space and let $\{ A_i\}_{i \in \omega}$ be a sequence of
events $A_i\in \sM$ with 
$\liminf_n \frac{\sum_{ i,j\leq n} \mu(A_i \cap A_j) }{(\sum_{i \leq n} \mu(A_i))^2} \leq D$.
Then for all $\epsilon>0$ there is a $\delta>0$ such that if $C \in \sM$ satisfies
$\mu(C) < \delta$, then 
$\liminf_n \frac{\sum_{ i,j\leq n}
  \mu(A_i \cap A_j \cap (X\setminus C)) }{(\sum_{i \leq n} \mu(A_i \cap (X\setminus C)))^2}
\leq D+\epsilon$.
Furthermore, if $\sum_i \mu(A_i)=\infty$ then there exists $\delta>0$ such that
if $\mu(C) <\delta$ then $\sum_i \mu(A_i\cap (X\setminus C)) =\infty$. 
\end{lemma}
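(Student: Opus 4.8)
The plan is to reduce both assertions to a single quantitative estimate controlling the mass lost upon intersecting with $X\setminus C$. Throughout I write $S_n=\sum_{i\le n}\mu(A_i)$ and $Q_n=\sum_{i,j\le n}\mu(A_i\cap A_j)$, with $S_n',Q_n'$ the same sums for the sets $A_i\cap(X\setminus C)$, so the hypothesis concerns $Q_n/S_n^2$ and the conclusion $Q_n'/(S_n')^2$. The first observation is that, writing $f_n=\sum_{i\le n}\mathbf{1}_{A_i}$ for the counting function, one has $S_n=\int f_n\,d\mu$ and $Q_n=\int f_n^2\,d\mu$. Since $Q_n'=\int_{X\setminus C}f_n^2\,d\mu\le Q_n$, the numerator can only decrease, so everything comes down to bounding the loss in the denominator, $S_n-S_n'=\sum_{i\le n}\mu(A_i\cap C)=\int_C f_n\,d\mu$.

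The hard part is that this loss is not small merely because $\mu(C)$ is small: a priori many of the $A_i$ could concentrate on $C$. The key realization is that the $\liminf$ hypothesis forbids this, and it is exactly Cauchy--Schwarz that converts it into a bound. Applying Cauchy--Schwarz to $\mathbf{1}_C$ and $f_n\mathbf{1}_C$ gives \[\Big(\sum_{i\le n}\mu(A_i\cap C)\Big)^2=\Big(\int_C f_n\,d\mu\Big)^2\le \mu(C)\int_C f_n^2\,d\mu\le \mu(C)\,Q_n,\] so that $S_n-S_n'\le\sqrt{\mu(C)}\,\sqrt{Q_n}$ and therefore $S_n'/S_n\ge 1-\sqrt{\mu(C)}\sqrt{Q_n/S_n^2}$. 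I would then fix a sequence $n_k$ witnessing the liminf, so that $Q_{n_k}/S_{n_k}^2\to L\le D$; along it the relative loss is governed by $\sqrt{\mu(C)L}\le\sqrt{\mu(C)D}$. (Note $Q_n\ge S_n^2$ by Jensen's inequality, so $D\ge 1$ and in particular $D>0$.)

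For the first assertion I would estimate, along $n_k$, \[\frac{Q_{n_k}'}{(S_{n_k}')^2}\le\frac{Q_{n_k}}{(S_{n_k}')^2}=\frac{Q_{n_k}/S_{n_k}^2}{(S_{n_k}'/S_{n_k})^2}\le\frac{Q_{n_k}/S_{n_k}^2}{\big(1-\sqrt{\mu(C)}\sqrt{Q_{n_k}/S_{n_k}^2}\big)^2},\] and let $k\to\infty$ to obtain $\liminf_n Q_n'/(S_n')^2\le L/(1-\sqrt{\mu(C)L})^2\le D/(1-\sqrt{\delta D})^2$, where the last step uses $L\le D$ together with $\mu(C)L\le\delta D<1$. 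Since $D/(1-\sqrt{\delta D})^2\to D$ as $\delta\to 0^+$, choosing $\delta<1/D$ small enough yields the desired bound $D+\epsilon$ (the constraint $\delta<1/D$ also keeps all denominators positive).

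For the second assertion, when $\sum_i\mu(A_i)=\infty$ the witnessing sequence satisfies $S_{n_k}\to\infty$, and the same lower bound gives $S_{n_k}'\ge S_{n_k}\big(1-\sqrt{\mu(C)}\sqrt{Q_{n_k}/S_{n_k}^2}\big)$. Taking $\delta<1/D$ forces the parenthesized factor to be bounded below by a positive constant for all large $k$, so $S_{n_k}'\to\infty$; as $S_n'$ is nondecreasing in $n$, this gives $\sum_i\mu(A_i\cap(X\setminus C))=\lim_n S_n'=\infty$.
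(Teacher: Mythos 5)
Your proof is correct and is essentially the same argument as the paper's: both bound the denominator loss $S_n-S_n'=\int_C f_n\,d\mu$ by a Cauchy--Schwarz/second-moment estimate against the ratio hypothesis, note that the numerator can only decrease, and absorb the resulting $\bigl(1-o(1)\bigr)^{-2}$ factor into $D+\epsilon$. The only cosmetic difference is that the paper isolates this estimate as Lemma~\ref{lem:u1} and applies it to the centered function $f_n-\int f_n$ (via an auxiliary function that averages $f_n$ over $C$), yielding the loss bound $\mu(C)+\sqrt{(D-1)\mu(C)}$ where you get $\sqrt{D\mu(C)}$ by applying Cauchy--Schwarz to $f_n$ directly.
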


\begin{proof}
It is enough to show that for every $D>0$ and $\eta>0$ there is a $\delta>0$ such that
for any $C$ with $\mu(C)<\delta$ we have that if $\{ A_i\}$ is a finite sequence
of events satisfying 
$\frac{\sum_{ i,j\leq n} \mu(A_i \cap A_j) }{(\sum_{i \leq n} \mu(A_i))^2} \leq D$ 
then we have that
\[
\sum_i (\mu(A_i) \cap (X\setminus C)) \geq (1-\eta) \sum_i (\mu(A_i)).
\]
Since $\delta$ does not depend on $n$ or the $A_i$, this also
immediately gives the second statement.
Let $f=\sum_i \chi_{A_i}\colon X \to \R^{\geq 0}$. 
Note that $(\int f)^2 = (\sum_i \mu(A_i))^2$. Let $m=\int f$.
Then we have  $\int (f-m)^2= \int f^2 - m^2$, and so
$\sum_{i,j} \mu(A_i \cap A_j)= \int f^2 = m^2+ \int (f-m)^2$.
So, $\frac{\sum_{ i,j\leq n} \mu(A_i \cap A_j) }{(\sum_{i \leq n} \mu(A_i))^2}
=1+ \frac{ \int (f-m)^2}{m^2}\leq D$. So,
$ \frac{ \int (f-m)^2}{m^2} \leq D'=D-1$.
To prove the inequality it suffices (using $f= \sum_{i<n} \chi_{A_i}$)
to show the following  Lemma~\ref{lem:u1}.

\end{proof}

\begin{lemma} \label{lem:u1}
Let $D'>0$, then for every $\eta>0$, there is a $\delta>0$ such that
for every measurable function $f\colon X \to \mathbb{R}$ which satisfies
$\displaystyle \frac{ \int \left(f-\int f\right)^2}{\left(\int
f\right)^2}\leq D'$ and for every measurable set $C$ for which $\mu(C)
< \delta$, we have
\[\int_{X \setminus C} f \geq (1-\eta)\int_X f.\]
\end{lemma}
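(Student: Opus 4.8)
The plan is to recognize the hypothesis as a variance bound and to control the part of the integral living on $C$ via Cauchy--Schwarz. Write $m=\int_X f$. Since the ratio $\frac{\int(f-\int f)^2}{(\int f)^2}$ appears in the hypothesis we have $m\neq 0$, and in the intended application $f=\sum_{i<n}\chi_{A_i}\geq 0$, so we may assume $m>0$ (if $m$ were $0$ the hypothesis would be degenerate and the conclusion trivial). The hypothesis then reads $\int_X (f-m)^2\,d\mu\leq D'm^2$, i.e. $\|f-m\|_{L^2(\mu)}\leq \sqrt{D'}\,m$; this is the only place the assumption enters.

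Next I would split off the mean and estimate the contribution of $C$ by writing $\int_C f\,d\mu=\int_C (f-m)\,d\mu + m\,\mu(C)$ and applying Cauchy--Schwarz to the first term. Viewing $\int_C (f-m)\,d\mu=\int_X (f-m)\chi_C\,d\mu$, we get $\bigl|\int_C(f-m)\,d\mu\bigr|\leq \|f-m\|_{L^2(\mu)}\,\|\chi_C\|_{L^2(\mu)}=\|f-m\|_{L^2(\mu)}\sqrt{\mu(C)}\leq \sqrt{D'}\,m\,\sqrt{\mu(C)}$, using $\|\chi_C\|_{L^2(\mu)}=\sqrt{\mu(C)}$. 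Combining the two pieces yields $\int_C f\,d\mu\leq m\bigl(\sqrt{D'\,\mu(C)}+\mu(C)\bigr)$.

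Finally I would choose $\delta$ depending only on $D'$ and $\eta$ (and on nothing else) so small that $\mu(C)<\delta$ forces $\sqrt{D'\,\mu(C)}+\mu(C)\leq\eta$; for instance $\delta=\min\{\eta^2/(4D'),\,\eta/2\}$ works. Then $\int_C f\,d\mu\leq \eta m=\eta\int_X f$, and hence $\int_{X\setminus C} f = \int_X f-\int_C f\geq (1-\eta)\int_X f$, as required. I do not expect a real obstacle here: the whole content is the observation that the hypothesis is precisely a bound on the normalized variance $\mathrm{Var}(f)/(\int f)^2$, so a set of small measure can capture only a small fraction of the integral. The one point to keep an eye on is that the resulting $\delta$ is genuinely uniform in $f$ (and thus in $n$ and in the finite family $\{A_i\}$), which is automatic since the estimate depends on $f$ only through the constant $D'$; this uniformity is exactly what Lemma~\ref{lem:approx} needs when it passes from the finite statement back to the $\liminf$.
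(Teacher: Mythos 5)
Your proof is correct and follows essentially the same route as the paper: both rest on the decomposition $\int_C f = \int_C (f-m) + m\,\mu(C)$ together with the key bound $\int_C (f-m) \leq \sqrt{D'\mu(C)}\,m$, and then choose $\delta$ exactly as you do, so that $\sqrt{D'\delta} + \delta \leq \eta$. The only difference is cosmetic: you get the key bound by citing Cauchy--Schwarz for $(f-m)\chi_C$, while the paper re-derives that same instance of Cauchy--Schwarz by hand (replacing $f$ on $C$ by its average $\frac{\int_C f}{\mu(C)}$ and comparing variances), and your explicit attention to $m>0$ matches an assumption the paper uses tacitly when taking square roots.
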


\begin{proof}
Fix $D', \eta >0$. Let $\delta>0$ be small enough so that
$\eta> \delta + \sqrt{\delta D' }$. Let $f$ and $C$ be as in the hypotheses,
so $\mu(C)<\delta$. Let $m=\int f$. 
Let $g\colon X \to \mathbb{R}$ be defined by
$g(x) = \begin{cases}f(x) & x \not \in C\\ \frac{\int_C f}{\mu(C)} & x \in C \end{cases}$.

Notice that $\int g = \int f = m$ and that $\int (g-m)^2 \leq \int (f-m)^2$.  To see this, observe that 
\[\int_C f^2 - g^2 = \int_C (g+(f-g))^2-g^2= \int_C 2g(f-g) + \int_C (f-g)^2=\int_C (f-g)^2\] 
since $g$ is constant on $C$ and $\int_C (f-g)=0$.  Using this and our hypothesis on $f$, we have
\[
\int_C (g-m)^2 \leq \int (g-m)^2 \leq \int (f-m)^2 \leq D'\, m^2
\]
On the other hand we have
\[
\int_C (g-m)^2= \mu(C) \left(\frac{\int_C f}{\mu(C)} -m \right)^2
=\frac{1}{\mu(C)} \left(\int_C f - m \mu(C)\right)^2.
\]
We thus have $m-\int_{X\setminus C} f -m \mu(C)=\int_C f-m \mu(C) \leq m \sqrt{D' \mu(C)}$. 
Hence
\[\int_{X\setminus C} f \geq m(1-\mu(C)- \sqrt{D' \mu(C)})
> (1-\eta)\int f
\]
by choice of $\delta$, since $\mu(C)<\delta$. 

\end{proof}

\begin{proof}[Proof of Theorem~\ref{lem:rl}]
Let $\{ A_i\}_{i \in \omega}$ and $D$ be as the statement of Theorem~\ref{lem:rl}.
For $0 \leq a \leq b$ let $A_{[a,b]}=\bigcup_{a\leq i \leq b} A_i$, and define $A_{[a,b)}$ similarly.
By choosing closed sets $F_i \subseteq A_i$ with $\mu(A_i\setminus F_i)$ sufficiently small,
we will have that 
$\liminf_n \frac{\sum_{ i,j\leq n} \mu(F_i \cap F_j) }{(\sum_{i \leq n} \mu(F_i))^2} \leq D$,
and so in proving the theorem we may assume that the sets $A_i$ are closed. 
Let $A= \bigcap_n \bigcup_{i\geq n} A_i$. To show $\mu(A)\geq \frac{1}{D}$, it
suffices to show that $\II$ wins the game $G(1-\frac{1}{D},A)$.

The idea for constructing $\II$'s strategy is to use Lemmas~\ref{lem:side}
and \ref{lem:approx}. At each stage of the game, we will first use Lemma~\ref{lem:side}
to pick the appropriate side which will maintain an inductive hypothesis as in the
statement of that lemma. We then use Lemma~\ref{lem:approx} to fix an interval of the sets
$\{ A_i\}$ which we commit to meeting at all future stages of the game. As the $A_i$
are closed, this will suffice to make sure the limit point we produce is in
infinitely many of the $A_i$. We now proceed with the definition of
$\II$'s strategy.

Say $\I$ makes first move
$m_0, m_1$ with $m_0 + m_1 > 1-\frac{1}{D}$, that is,
$D< \frac{1}{1-(m_0 + m_1)}=\frac{1}{\mu(N_\emptyset) -(m_0  + m_1)}$.
Let $m_\emptyset$ denote $m_0 + m_1$.
Thus we have
\[
\liminf_n \frac{ \sum_{i,j \leq n} \mu(A_i\cap A_j)}{(\sum_{i <n} \mu(A_i))^2}
(\mu(N_\emptyset)- m_\emptyset) <1.
\]

By Lemma~\ref{lem:side} there is a $t_0\in \{ 0,1\}$ such that
\begin{equation*} \label{eq:t1}
\liminf_n \frac{ \sum_{i,j \leq n} \mu(A_i\cap A_j\cap N_{t_0})}
{(\sum_{i \leq n} \mu(A_i\cap N_{t_0}))^2} (\mu(N_{t_0})-m_{t_0})
<1,
\end{equation*}
and such that

\begin{equation*} \label{eq:t2}
\sum_{i} \mu(A_i\cap N_{t_0})=\infty.
\end{equation*}

Since this is a strict inequality, we may then apply
Lemma~\ref{lem:approx} to pick $\ell_0$ large enough so that

\[
\liminf_n \frac{ \sum_{i,j \leq n} \mu(A_i\cap A_j\cap N_{t_0}\cap A_{[0,\ell_0)})}
{(\sum_{i \leq n} \mu(A_i\cap N_{t_0}\cap A_{[0,\ell_0)}))^2} (\mu(N_{t_0})-m_{t_0}) <
1,
\]
and also so that $\sum_i \mu (A_i \cap N_{t_0}\cap A_{[0,\ell_0)}) = \infty$.
Here we have used Lemma~\ref{lem:approx} with $X=N_{t_0}$ and $\mu$ is normalized
$\mu$ measure on $N_{t_0}$, and $C= A_{[0,\infty)}\setminus A_{[0,\ell_0]}$.

Since $\sum_i \mu(A_i\cap N_{t_0}\cap A_{[0,\ell_0)}) =\infty$ it is easy to check that we still have the inequality for the sequence of events $\setof{A_i \suchthat i \geq \ell_0}$, i.e.
\[
\liminf_n \frac{ \sum_{\ell_0\leq i,j \leq n} \mu(A_i\cap A_j\cap N_{t_0}\cap A_{[0,\ell_0)})}
{(\sum_{\ell_0\leq i \leq n} \mu(A_i\cap N_{t_0}\cap A_{[0,\ell_0)}))^2} (\mu(N_{t_0})-m_{t_0}) <
1.
\]

$\II$'s response to $\I$'s move of $m_0, m_1$ will be to play
$t_0$ unless $m_{t_0}=0$, in which case player $\II$ must follow the rules and play $t'_0=1-t_0$.
We check in this case that the we can still maintain these conditions using $t'_0$.
 Assume without loss of generality that $t_0=1$, that is, $m_1=0$,
so $m_\emptyset=m_0$. 

Consider the expression.
\[
\liminf_n \frac{ \sum_{i,j \leq n} \mu(A_i\cap A_j\cap N_{1})}
{(\sum_{i \leq n} \mu(A_i\cap N_{1}))^2} (\mu(N_{1})-m_1)=
 \liminf_n \frac{ \sum_{i,j \leq n} \mu(A_i\cap A_j\cap N_{1})}
{(\sum_{i \leq n} \mu(A_i\cap N_{1}))^2} \mu(N_1).
\]
Now note that we have that $\liminf_n \frac{ \sum_{i,j \leq n} \mu(A_i\cap A_j\cap N_{1})}
{(\sum_{i \leq n} \mu(A_i\cap N_{1}))^2} \mu(N_1) \geq 1$, since for any measure $\nu$ and sequence of events $A_i$, the quantity
$\liminf_n \frac{ \sum_{i,j \leq n} \nu(A_i\cap A_j)}
{(\sum_{i \leq n} \nu(A_i))^2}$ is always at least $1$.  So, 
from Lemma~\ref{lem:side} that
\[
\liminf_n \frac{ \sum_{i,j \leq n} \mu(A_i\cap A_j\cap N_{0})}
{(\sum_{i \leq n} \mu(A_i\cap N_{0}))^2} (\mu(N_{0})-m_0)<1
\]
and $\sum_i \mu(A_i \cap N_0) =\infty$, 
and we are done.

For the general step, suppose $t \in 2^{<\omega}$, and we just finished the round of the game where
$\I$ has played $m_{t\res (|t|-1)\concat 0}$ and $m_{t\res (|t|-1)\concat 1}$
and we have defined $\II$'s response by picking $t$ and thus
$N_t$ and $m_t$. We assume inductively that we have defined a sequence $\ell_0<\ell_1<\cdots< \ell_{|t|}$
and we have
\begin{equation} \label{eqn:inha}
\liminf_n \frac{\sum_{\ell_{|t|} \leq i,j \leq n} \mu(A_i \cap A_j \cap N_t \cap
\bigcap_{k<|t|}A_{[\ell_k,\ell_{k+1})})}
{(\sum_{\ell_{|t|}\leq i \leq n} \mu(A_i \cap N_t \cap
\bigcap_{k<|t|}A_{[\ell_k,\ell_{k+1})})^2} (\mu(N_t)-m_t)
< 1.
\end{equation}
and also
\begin{equation} \label{eqn:inhb}
\sum_i  \mu(A_i \cap N_t \cap \bigcap_{k<|t|}A_{[\ell_k,\ell_{k+1})}) =\infty.
\end{equation}
Suppose now that player $\I$ plays $m_{t\concat 0}$ and $m_{t\concat 1}$ where 
$m_t=m_{t\concat 0}+ m_{t\concat 1}$. Suppose first that $m_{t\concat 0}, m_{t\concat 1}>0$.
As in the first step, we first apply Lemma~\ref{lem:side} to get a  $b \in \{ 0,1\}$ such that if we set
$t'=t\concat b$ then using $N_{t'}$, $m_{t'}$, and intersecting with the set
$\bigcap_{k<|t|}A_{[\ell_k,\ell_{k+1})}$
in both the numerator and denominator, we maintain iequalities~\ref{eqn:inha} and \ref{eqn:inhb}.

We then apply Lemma~\ref{lem:approx} to get an $\ell_{|t'|}= \ell_{|t|+1}$
so that we maintain inequalities~\ref{eqn:inha} and \ref{eqn:inhb},  now intersecting also with
$A_{[\ell_{|t|}, \ell_{|t'|})}$. 

Let $\II$ respond by playing $b$.
This completes the definition of $\II$'s strategy $\tau$ in
the game $G(1-\frac{1}{D},A)$. Note that we have maintained the inductive
inequalities~\ref{eqn:inha} and \ref{eqn:inhb} at all positions of the game.

To see that this is a winning strategy for $\II$, let $\vec{t}$ be the moves by $\II$
in a run of the game consistent with $\tau$. Let $\{ x\}= \bigcap_n N_{\vec{t}\res n}$.
For all $n$ we have
\[
\sum_{i \geq \ell_n}  \mu(A_i \cap N_{\vec{t}\res n} \cap \bigcap_{k<n} A_{[\ell_k, \ell_{k+1})} )=\infty.
\]
In particular we have that for every $n$ that
$\mu (N_{\vec{t}\res n} \cap \bigcap_{k<n} A_{[\ell_k, \ell_{k+1})} ) >0$, since these sets are descending.
For each $k$, we thus have that for all $n$ that $\mu( N_{\vec{t}\res n}\cap A_{[\ell_k,\ell_{k+1})}) >0$,
and so $N_{\vec{t}\res n}\cap A_{[\ell_k,\ell_{k+1})}\neq \emptyset$. Since
$A_{[\ell_k,\ell_{k+1})}$ is a closed set, this gives that $x \in A_{[\ell_k,\ell_{k+1})}$.
Since this holds for all $k$, we have that $x \in A$, and thus $\II$ has won the run of the game.

\end{proof}

\section{Fubini's Theorem} \label{sec:fubini}

In this section, we prove Fubini's theorem.
\begin{theorem} \label{thm:fubini}
Let $\mu$ be a Borel probability measure on the Polish space $X$.
Let $A \subseteq X \times X$ be $\mu \times \mu$-measurable.
Then the following are equivalent:
\begin{enumerate}
\item $(\mu \times \mu)(A)=0$ \label{f1}
\item For almost all $x \in X$, $\mu(A_x)=0$ where $A_x = \setof{y \in X \suchthat (x, y) \in A}$ \label{f2}
\item For almost all $y \in X$, $\mu(A^y)=0$ where $A^y = \setof{x \in X \suchthat (x, y) \in A}$ \label{f3}
\end{enumerate}
\end{theorem}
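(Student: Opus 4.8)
The plan is to move to Cantor space and prove only the equivalence $(\ref{f1})\Leftrightarrow(\ref{f2})$, since $(\ref{f1})\Leftrightarrow(\ref{f3})$ is the same statement with the two coordinates interchanged. As $X$ is Polish we may take $X=2^\omega$, and we identify $2^\omega\times 2^\omega$ with $2^\omega$ by interleaving, so that $\nu=\mu\times\mu$ becomes a Borel probability measure on $2^\omega$ with $\nu(N_u\times N_v)=\mu(N_u)\mu(N_v)$ on the clopen rectangles. The only non-game ingredient I would need is \emph{Fubini for a closed set}: if $F\subseteq 2^\omega\times 2^\omega$ is closed then each section $F_x$ is closed, $x\mapsto\mu(F_x)$ is measurable, and $\nu(F)=\int\mu(F_x)\,d\mu(x)$. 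This is elementary: write $F$ as a decreasing intersection of finite unions of clopen rectangles, on which the identity is just the exact product rule, and pass to the limit by monotone convergence. Everything else is supplied by the game, which furnishes the required closed inner approximations \emph{constructively} through the strategies of Lemmas~\ref{lem:ws1} and \ref{lemma:rosendal}.

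For $(\ref{f1})\Rightarrow(\ref{f2})$, assume $\nu(A)=0$. Then $\II$ wins $G(0,A^c)$, so by Lemma~\ref{lemma:rosendal} there are closed sets $F_k\subseteq A^c$ with $\nu(F_k)>1-\tfrac1k$. Since $(F_k)_x\subseteq(A_x)^c$ is closed, $\mu^*(A_x)\le 1-\mu((F_k)_x)$, and Fubini for the closed set $F_k$ gives $\int\bigl(1-\mu((F_k)_x)\bigr)\,d\mu(x)=1-\nu(F_k)<\tfrac1k$. Hence the nonnegative function $\mu^*(A_x)$ is dominated in upper integral by a measurable function of arbitrarily small integral, forcing $\mu^*(A_x)=0$ for almost every $x$, which is $(\ref{f2})$. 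For the converse I argue the contrapositive: if $\nu(A)>0$ then $\nu_*(A)>0$, so by Theorem~\ref{thm:equiv} player $\I$ wins $G(s,A^c)$ for some $s>0$, and Lemma~\ref{lem:ws1} extracts from the winning strategy a closed set $F\subseteq A$ with $\nu(F)>s>0$. Fubini for $F$ now gives $\int\mu(F_x)\,d\mu(x)=\nu(F)>0$, so on a set of $x$ of positive measure the closed section $F_x\subseteq A_x$ has $\mu(F_x)>0$; hence $\mu_*(A_x)>0$ there, so $A_x$ is not null, contradicting $(\ref{f2})$.

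The main obstacle is really the base case, Fubini for closed sets, together with the inner/outer bookkeeping it requires: one must check that the clopen-rectangle approximations converge in the right (decreasing) fashion so that the section function is genuinely measurable in the limit, and one must keep careful track of whether inner or outer section measure is in play (I use upper integrals above precisely to avoid having to know in advance that $x\mapsto\mu^*(A_x)$ is measurable; its almost-everywhere vanishing is an output of the argument, not an input). A more self-consciously game-theoretic alternative would avoid the closed-set reduction and instead interleave, in the product game, a \emph{horizontal} strategy steering the first coordinate into the conull set of good $x$ with a \emph{vertical} section strategy steering the second coordinate into $A_x$; there the obstacle is that the section $A_x$ is determined only by the full first coordinate, which is revealed in the limit, so one would have to replace exact sections by their conditional cylinder approximations and invoke a martingale/density convergence argument to see that the limiting point lands in the correct section. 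I would prefer the closed-set route above, since it confines all the analytic difficulty to the single, elementary Fubini computation for clopen rectangles.
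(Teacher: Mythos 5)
Your proof is correct, but it is not the paper's proof; the two arguments are genuinely different. You reduce everything to one measure-theoretic base case --- Fubini for closed sets, obtained by writing a closed $F$ as a decreasing intersection of finite unions of clopen rectangles and passing to the limit by continuity from above and monotone convergence --- and you use the game only as a source of closed inner approximations: Lemma~\ref{lemma:rosendal} supplies closed $F_k \subseteq A^c$ of measure near $1$, and Theorem~\ref{thm:equiv} with Lemma~\ref{lem:ws1} supplies a closed $F \subseteq A$ of positive measure. In effect, the game serves as a wrapper for inner regularity, and the remainder (Markov's inequality, convergence of integrals) is the classical argument. The paper instead introduces a two-dimensional game $G_2(s,A)$ and proves two purely combinatorial transfer theorems, Theorems~\ref{thm:fub1} and \ref{thm:fub2}: a winning strategy for $\II$ (resp.\ $\I$) in $G_2(0,A)$ is converted, by an explicit strategy-copying construction using the $\delta_\tau$ quantities of Lemma~\ref{lemma:delta} and a live/dead-node bookkeeping, into a winning strategy in the one-dimensional game $G(0,B)$ for the set $B$ of points whose sections are appropriately won; Theorem~\ref{thm:fubini} then follows from Theorem~\ref{thm:equiv}. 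What each approach buys: yours is shorter and needs nothing beyond elementary measure theory, but it rests on exactly the kind of ``known measure theoretic arguments'' (limit theorems for integrals) that the paper's stated aim is to avoid, and it applies only to $(\mu\times\mu)$-measurable $A$. The paper's transfer theorems make no measurability assumption on $A$ whatsoever: they convert determinacy of the two-dimensional game into determinacy statements about section games, so under determinacy hypotheses (e.g.\ $\ad$, or determinacy for a pointclass) they yield Fubini-type conclusions for arbitrary, possibly non-measurable, sets --- the precise measure analog of proving Kuratowski--Ulam via the Banach--Mazur game --- and they output explicit strategies, in keeping with the paper's constructive theme.
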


In order to prove this theorem using the measure game, we must use a two-dimensional version of the game.
We give the precise definition of the game.

\begin{definition}
Let $s \in \left[0, 1\right)$ and let $A \subseteq 2^\omega \times 2^\omega$, we define a game $G_2(s, A)$ similarly to the measure game $G$, except at each move player $\I$ specifies four numbers instead of two (indexed by $\setof{0, 1} \times \setof{0, 1})$.
\begin{center}
\begin{games}
\game[player spacing=0.25cm, game label={$G_2(s, A)$}, dots]{
    {$m_{(0,0)}, m_{(0,1)}, m_{(1, 0)}, m_{(1, 1)}$}{$x_0$}
    {$m_{x_0 \concat(0,0)}, m_{x_0 \concat(0,1)}, m_{x_0 \concat(1,0)}, m_{x_0 \concat(1,1)}$}{$x_1$}
    }
\end{games}
\end{center}

We require that player $\I$ must follow the rules:
\begin{itemize}
\item $\forall t \in (2\times 2)^{<\omega},~ m_t \in \mathbb{Q}$
\item $m_{(0,0)} + m_{(0,1)} + m_{(1, 0)} + m_{(1, 1)} > s$
\item $\forall t \in (2\times 2)^{<\omega},~ 0\leq m_t \leq (\mu \times \mu)(N_{t_0} \times N_{t_1})$
\item $\forall t \in (2\times 2)^{<\omega},~0< (\mu \times \mu)(N_{t_0} \times N_{t_1}) \Rightarrow m_{t} < (\mu \times \mu)(N_{t_0} \times N_{t_1})$
\item $\forall t \in (2\times 2)^{<\omega}, ~ m_{t \concat(0,0)}+ m_{t \concat(0,1)}+ m_{t \concat(1,0)}+ m_{t \concat(1,1)}= m_t$
\end{itemize}
and player $\II$ must follow the rules:
\begin{itemize}
\item $\forall n \in \omega,~  x_n \in \setof{0, 1} \times \setof{0, 1}$
\item $\forall n \in \omega,~  m_{x_0 x_1 \dots x_n} \neq 0$.
\end{itemize}
\end{definition}

\begin{remark}
We will need to analyze strategies for player $\II$ in the game $G_2$, and so we remark that Lemma~\ref{lemma:delta} also applies to this game.
\end{remark}

\begin{theorem} \label{thm:fub1}
Let $A \subseteq 2^\omega \times 2^\omega$.  If $\II$ wins $G_2(0, A)$, then $\II$ wins $G(0, B)$ where \[B = \setof{x \in 2^\omega \suchthat \II~\text{wins}~G(0, A_x)}\]
\[A_x = \setof{y \in 2^\omega \suchthat (x, y) \in A}\]
\end{theorem}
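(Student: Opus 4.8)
We want to show: if Player II wins the two-dimensional game $G_2(0,A)$, then II wins the one-dimensional game $G(0,B)$, where $B$ is the set of $x$ such that II wins $G(0,A_x)$. So we have a winning strategy $\tau_2$ for II in $G_2(0,A)$, and we must produce a strategy $\tau$ for II in $G(0,B)$.

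**The approach.** The plan is to use the winning strategy $\tau_2$ in the two-dimensional game to guide II's moves in the one-dimensional game $G(0,B)$, reading off only the "first coordinate" information. Suppose in $G(0,B)$ Player I opens with $(m_0,m_1)$ where $m_0+m_1>0$. I want to simulate a play of $G_2(0,A)$ in which I's four-number moves have the correct first-coordinate marginals. Given I's move $(m_0,m_1)$ in the one-dimensional game, I should produce a legal first move in $G_2$ by splitting each $m_i$ across the second coordinate: that is, choose rationals $m_{(i,0)}, m_{(i,1)}$ with $m_{(i,0)}+m_{(i,1)}=m_i$ and $0\le m_{(i,j)}\le (\mu\times\mu)(N_i\times N_j)$, strictly below when the box has positive measure. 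Then feed this into $\tau_2$ to get II's response $x_0=(a_0,b_0)\in 2\times 2$, and have II play the first coordinate $a_0$ in $G(0,B)$.

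**The key technical step.** The main obstacle is that this naive simulation only produces a single second-coordinate branch $b_0,b_1,\dots$, whereas membership of $x$ in $B$ requires II to win the \emph{entire} game $G(0,A_x)$ along that first coordinate — i.e., to defeat \emph{all} of I's second-coordinate plays, not just one. So the right move is to use $\tau_2$ not to play a single run but to define, for the fixed first-coordinate sequence $x=(a_0,a_1,\dots)$ produced by II's play in $G(0,B)$, a strategy $\tau_x$ for II in the slice game $G(0,A_x)$. Concretely, once the one-dimensional run of $G(0,B)$ is (partially) played and has committed to first coordinates $a_0,\dots,a_{n-1}$ with associated first-coordinate values $m_{a_0\cdots a_{n-1}}$, I want: whenever a hypothetical opponent in $G(0,A_x)$ offers second-coordinate values at node $b_0\cdots b_{k-1}$, to embed these as the second-coordinate marginals of a legal $G_2$ move (using the already-fixed first coordinates $a_i$ and splitting the committed first-coordinate mass $m_{a_0\cdots a_{i}}$ appropriately across the second coordinate), and let $\tau_2$ dictate the response. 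The delicate point is \textbf{consistency}: the first-coordinate moves $a_i$ that $\tau_2$ produces in these embedded runs must agree with the $a_i$ that II actually played in $G(0,B)$, and I must arrange the marginal-splitting so that $\tau_2$ is forced to select the intended first coordinate. This is exactly where Lemma~\ref{lemma:delta} (noted to apply to $G_2$) enters: it lets me compute, for each of I's possible two-dimensional moves refining a given first coordinate, the infimal values $\delta_{\tau_2}(\,\cdot\,)$, and thereby choose II's one-dimensional move $a_n$ in $G(0,B)$ to be a first coordinate along which $\tau_2$ keeps the total mass positive, guaranteeing the nonzero-value rule is respected on both axes.

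**Assembling the strategy and verifying it wins.** So the strategy $\tau$ for II in $G(0,B)$ is: maintain a partial $G_2$-position consistent with $\tau_2$ whose first coordinates are II's actual $G(0,B)$-moves; at each round use Lemma~\ref{lemma:delta} applied to $\tau_2$ to pick the first coordinate $a_n$ keeping $m_{a_0\cdots a_n}\ne 0$ available, play $a_n$. To see this wins: let $x=(a_0,a_1,\dots)$ be the first-coordinate sequence of any run of $G(0,B)$ consistent with $\tau$. The construction simultaneously defines, for this fixed $x$, a strategy $\tau_x$ for II in $G(0,A_x)$ by embedding second-coordinate play into $G_2$-moves over the fixed first coordinates and copying $\tau_2$'s second-coordinate responses; because $\tau_2$ wins $G_2(0,A)$, every resulting full run lands in $A$, so every II-run of $G(0,A_x)$ under $\tau_x$ lands in $A_x$, witnessing $x\in B$. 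Hence $\tau$ steers II into $B$, as required. The hard part, to repeat, is the consistency bookkeeping of the previous paragraph — ensuring the one fixed first-coordinate branch chosen in $G(0,B)$ can carry \emph{every} second-coordinate counter-play of the slice game, which is why the marginal-splitting must be done freshly for each hypothetical opponent move in $G(0,A_x)$ rather than committed to in advance.
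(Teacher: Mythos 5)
You have correctly isolated the crux of the problem --- a single simulated run of $\tau_2$ yields only one second-coordinate branch, whereas $x \in B$ requires defeating \emph{every} counter-play in the slice game --- and your scaffolding (maintain positions of $G_2$ consistent with $\tau_2$ whose first coordinates agree with $\II$'s actual moves in $G(0,B)$) is the same as the paper's. But the step your whole argument rests on, namely that the marginal-splitting can be arranged so that $\tau_2$ is \emph{forced} to select the intended first coordinate in response to every slice-game move, is false and cannot be repaired. To force $\tau_2$ onto first coordinate $a_n$ via the rules you must make the quadrants over $1-a_n$ unavailable, i.e.\ assign them value $0$ (note that at the root, since $s=0$, all four $\delta$-values are $0$, so any positive value on a wrong quadrant may be selected); additivity then forces the entire committed mass onto the two boxes over $a_n$, whose total measure is $\mu(N_{x\res (n+1)})\cdot\mu(N_b)$, and this shrinks much faster than the mass your embedding must carry. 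Concretely, for the coin-flipping measure: a constant scaling $c$ requires $c\,\tilde m_b < 2\cdot 4^{-(n+1)}$ while the slice opponent may commit $\tilde m_b$ close to $2^{-n}$, forcing $c < 2^{-n-1} \to 0$; and a nonconstant scaling violates $\I$'s additivity rule unless the deficit is dumped on the wrong quadrants, which reopens the escape route. So past some round the ``forcing'' move is illegal (it violates $m_t \leq (\mu\times\mu)(N_{t_0}\times N_{t_1})$), $\tau_2$ may select the opposite first coordinate, and the simulation of $G(0,A_x)$ collapses. Your rule for choosing $a_n$ (``keep the total mass positive'') also carries no quantitative information, so you cannot even bound how much of the second-coordinate space is lost when this happens.

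The paper's proof resolves exactly this difficulty by \emph{not} forcing and by \emph{not} proving $x \in B$ outright. It first reduces, via Corollary~\ref{lemma:countadd}, to showing that $\II$ wins $G(0, B_\epsilon)$ for each $\epsilon>0$, where $B_\epsilon$ is the set of $x$ such that $\II$ wins $G(\epsilon, A_x)$. In the simulation, a second-coordinate node is declared \emph{dead} when Lemma~\ref{lemma:delta} gives $\delta$ equal to the full box measure --- i.e.\ when $\tau$ cannot be steered into the current first coordinate at any cost below the whole box --- and such nodes are simply abandoned rather than forced; for live nodes one merely needs the \emph{existence} of some move steering $\tau$ into the intended quadrant, which the definition of $\delta$ supplies. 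The move $x_n$ is then chosen to \emph{minimize} the ratio $\sum_{p\concat j}\delta_p(x_n,j)/m_{x_0\dots x_n}$, and the inequality $\min\{a/c,\,b/d\}\leq (a+b)/(c+d)$ keeps this ratio below $\epsilon$ at every level; this is the quantitative invariant your proposal lacks. It bounds the measure of dead nodes, so the tree $T$ of live nodes satisfies $\mu([T]^c)\leq\epsilon$ and $[T]\subseteq A_x$, and Theorem~\ref{thm:equiv} --- not a direct strategy construction --- then converts this into ``$\II$ wins $G(\epsilon,A_x)$,'' i.e.\ $x\in B_\epsilon$. Your proposal is missing both the dead-node/$\epsilon$ bookkeeping and the countable-additivity reduction from the sets $B_\epsilon$ back to $B$, and the consistency (forcing) claim that substitutes for them does not hold.
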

\begin{proof}
By Corollary \ref{lemma:countadd}, it suffices to show that for any $\epsilon>0$,  $\II$ wins $G(0, B_\epsilon)$ where\\ $B_\epsilon =\setof{x \in 2^\omega \suchthat \II~\text{wins}~G(\epsilon, A_x)}$. With this in mind, fix $\epsilon>0$ and let $\tau$ be a winning strategy for $\II$ in $G_2(0, A)$.  We will define a strategy for player $\II$ in the game $G(0, B_\epsilon)$, which in each round will play moves to minimize a certain quantity.  This quantity will be computed by analyzing the given strategy $\tau$ and assigning values to each of the four quadrants which are available to $\tau$ at each round of $G_2(0, A)$.  While defining this strategy, we will also define a tree of positions in the two-dimensional game which are consistent with $\tau$, in which the first coordinate of every node agrees with the moves $x_0, \dots x_k$ we are playing in $G(0, B_\epsilon)$.

Consider first $\delta_\tau^\emptyset$, in the notation of Lemma~\ref{lemma:delta} (Note that because $s=0$, $\delta_\tau^\emptyset(i, j) = 0$ for each $(i, j) \in 2 \times 2$). Suppose $\I$ plays $m_0$ in $G(0, B_\epsilon)$.  Since on this move, $\delta_\tau^\emptyset$ is trivial, we play $x_0$ to maximize $m_{x_0}$.  For each $j \in 2$, if $\delta_\tau^\emptyset(x_0, j)=(\mu \times \mu)(N_{x_0} \times N_{j})$, (i.e. they are both zero) we set $q_j=(\mu \times \mu)(N_{x_0} \times N_{j})=0$ and declare the node $j$ to be ``dead''.  Otherwise if $\delta_\tau^\emptyset(x_0, j)<(\mu \times \mu)(N_{x_0} \times N_{j})$, choose $q_j$ such that $\delta_\tau^\emptyset(x_0, j)<q_j<(\mu \times \mu)(N_{x_0} \times N_{j})$ and
\[\frac{q_0 + q_1}{m_{x_0}} < \epsilon.\]
For each live node $j$, we fix a move $\hat{q}_j$ by $\I$ which assigns $q_j$ to the quadrant $(x_0, j)$ with $\tau$ selecting that quadrant.  For simplicity of notation, we do not keep explicit track of the values assigned to the other quadrants by $\hat{q}_j$.  

For each live node $j$, we have the $4$-tuple $\delta_\tau^{\hat{q}_j}$, we define $\delta_j=\delta_\tau^{\hat{q}_j}$ in this case. For each dead node $j$, we define $\delta_j(i, k)=(\mu \times \mu)(N_{x_0 \concat i} \times N_{j \concat k})$. Note that at least one node must be alive at this step (and at each later step). 

Next suppose $\I$ plays $(m_{x_0 \concat 0}, m_{x_0 \concat 1})$ in response to our move $x_0$ in $G(0, B_\epsilon)$. We define our strategy's response by choosing $x_1$ (which determines a vertical section on which to continue play)
to minimize $\frac{\delta_0(x_1, 0) + \delta_0(x_1, 1) + \delta_1(x_1, 0) + \delta_1(x_1, 1)}{m_{x_0 x_1}}$, which will guarantee that 
\[\frac{\delta_0(x_1, 0) + \delta_0(x_1, 1) + \delta_1(x_1, 0) + \delta_1(x_1, 1)}{m_{x_0 x_1}} \leq \frac{q_0+q_1}{m_{x_0}}<\epsilon.\]
This is because for dead nodes, $\sum_{i, k \in 2}\delta_\tau^{\hat{q_j}}(i, k) = q_j = (\mu \times \mu)(N_{x_0 \concat i} \times N_{j \concat k})$, while for live nodes we have $\sum_{i, k \in 2}\delta_\tau^{\hat{q_j}}(i, k) \leq q_j$ by Lemma~\ref{lemma:delta}.  We also use that $\min\setof{\frac a c, \frac b d} \leq \frac{a+b}{c+d}$ for any non-negative $a, b, c, d$ with $c+d>0$.  
For $p=j \concat k$, if $j$ is a dead node, then $p$ is declared to be dead.  We also declare $p$ to be a dead node if $\delta_j(x_1, k) = (\mu \times \mu)(N_{x_0 \concat x_1} \times N_{j \concat k})$, otherwise $p$ is alive.

Again, for each dead node $p = j \concat k$, set $q_p = \delta_j(x_1, k) = (\mu \times \mu)(N_{x_0 \concat x_1} \times N_{j \concat k})$.  For the live nodes, choose $q_p$ such that $\delta_j(x_1, k) < q_p < (\mu \times \mu)(N_{x_0 \concat x_1} \times N_{j \concat k})$ and so that
\begin{equation}\label{eq:fubindineq}
\frac{\sum_{p \in 2^2}q_p}{m_{x_0 x_1}} <\epsilon.
\end{equation}
and such that there is some move $\hat{q}_p$ by $\I$ which assigns $q_p$ to $(x_1, k)$ with $\tau$ selecting that quadrant. 

In general, suppose we've moved $x_0 \dots x_{n-1}$ and that we've defined $q_p$ for $p \in 2^n$. The node $p$ is live if $q_p< (\mu \times \mu)(N_{x_0 \dots x_{n-1}} \times N_p)$.  For each live node $p$, we have a corresponding $4$-tuple $\delta^{\hat{q}_p}_\tau$ and we set $\delta_p =\delta^{\hat{q}_p}_\tau$.  For dead nodes $p \in 2^n$, we set $\delta_p(i, j)= (\mu \times \mu)(N_{x_0 \dots x_{n-1} \concat i} \times N_{p \concat j})$.  Suppose $\I$ plays $(m_{x_0 \dots x_{n-1} \concat 0}, m_{x_0 \dots x_{n-1} \concat 1})$ We choose $x_n$ to minimize the quantity 
\[\frac{\sum_{p \concat j \in 2^{n+1}} \delta_p(x_n, j)}{m_{x_0 \dots x_n}}.\]
By induction we have maintained 
\[\frac{\sum_{p \in 2^n} q_p}{m_{x_0 \dots x_{n-1}}} < \epsilon\] 
and so we have
\[\frac{\sum_{p \concat j \in 2^{n+1}} \delta_p(x_n, j)}{m_{x_0 \dots x_n}} \leq \frac{\sum_{p \in 2^n} q_p}{m_{x_0 \dots x_{n-1}}} < \epsilon.\]
We declare $p \concat j$ to be dead if $\delta_p(x_n, j) = (\mu \times \mu)(N_{x_0 \dots x_{n}} \times N_{p \concat j})$, in which case, we set $q_{p \concat j} =(\mu \times \mu)(N_{x_0 \dots x_{n}} \times N_{p \concat j})$.  For live nodes we choose $q_{p \concat j}$ with $\delta_p(x_n, j) <q_{p \concat j} <(\mu \times \mu)(N_{x_0 \dots x_{n}} \times N_{p \concat j})$ to maintain
\[\frac{\sum_{p \concat j \in 2^{n+1}} q_{p \concat j}}{m_{x_0 \dots x_n}} < \epsilon.\] such that there is a move $\hat{q}_{p \concat j}$ which assigns $q_{p \concat j}$ to $(x_n, j)$ for which $\tau$ selects $(x_n, j)$.  This finishes the definition our strategy.

We aim to show that any result $x$ consistent with our strategy is a win for $\II$.  Let $T = \setof{\emptyset} \cup \setof{p \in 2^{<\omega} \suchthat q_p < (\mu \times \mu)(N_{x_0 \dots x_{n-1}} \times N_{p})}$ i.e. $T$ consists of all live nodes.

Note that if $p\not \in T$, then $q_p = (\mu \times \mu)(N_{x_0 \dots x_{n-1}} \times N_{p})$.  Note also that, by the rules of the game, $m_{x_0 \dots x_{n-1}}< \mu(N_{x_0 \dots x_{n-1}})$, and thus 
\[\sum_{p \in 2^{n} \setminus T} q_p = \sum_{p \in 2^{n} \setminus T} (\mu \times \mu)(N_{x_0 \dots x_{n-1}} \times N_{p}) = \mu(N_{x_0 \dots x_{n-1}}) \sum_{p \in 2^{n} \setminus T} \mu(N_{p})\]
but on the other hand we maintained 
\[\sum_{p \in 2^{n}} q_p < \epsilon m_{x_0 \dots x_{n-1}} < \epsilon \mu(N_{x_0 \dots x_{n-1}}).\]
thus
\[ \mu(N_{x_0 \dots x_{n-1}}) \sum_{p \in 2^{n} \setminus T} \mu(N_{p}) =\sum_{p \in 2^{n} \setminus T} q_p \leq \sum_{p \in 2^{n}} q_p < \epsilon \mu(N_{x_0 \dots x_{n-1}})\]
and so
\[\sum_{p \in 2^{n} \setminus T} \mu(N_{p}) < \epsilon\]
and therefore $\mu([T]^c) \leq \epsilon$. So by Theorem \ref{thm:equiv}, $\II$ wins $G(\epsilon, [T])$. Since $\tau$ was a winning strategy for $\II$ in $G_2(0, A)$, we have $[T] \subseteq A_x$, and so $\II$ wins $G(\epsilon, A_x)$.  Thus, $x \in B_\epsilon$ and so the strategy we've defined wins $G(0, B_\epsilon)$.

\end{proof}

The next theorem is the symmetrical version of Theorem~\ref{thm:fub1} for player $\I$. It says that if player $\I$
can win the two-dimensional game $G_2$ for a certain set, then player $\I$ can win the one-dimensional
game $G$ to produce an $x \in 2^\omega$ such that player $\I$ can win the game $G(0, A_x)$ to produce a $y$
in the $x$-section of the set.

\begin{theorem} \label{thm:fub2}
Let $A \subseteq 2^\omega \times 2^\omega$.  Suppose $\I$ wins $G_2(0, A)$, then $\I$ wins $G(0, B)$  where \[B = \setof{x \in 2^\omega \suchthat \I~\text{doesn't win}~G(0, A_x)}\]
\[A_x = \setof{y \in 2^\omega \suchthat (x, y) \in A}.\]
\end{theorem}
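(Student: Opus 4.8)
The plan is to prove this as the player-$\I$ mirror of Theorem~\ref{thm:fub1}. First I would convert $\I$'s win in the two-dimensional game into a scaled measure: by the straightforward two-dimensional analogue of Lemma~\ref{lem:ws1} (which holds verbatim for $G_2$, using the evident four-way version of a scaled measure on $(2\times 2)^{<\omega}$), a winning strategy $\sigma$ for $\I$ in $G_2(0,A)$ induces a scaled measure $M$ on $(2\times 2)^{<\omega}$ with $M(\emptyset)>0$ whose support tree $T$ satisfies $[T]\subseteq A^c$ and $(\mu\times\mu)([T])\geq M(\emptyset)$. I will regard $M$ as (the finite approximations to) a Borel measure $\nu\leq\mu\times\mu$, concentrated on the closed set $F:=[T]\subseteq A^c$, with total mass $\nu(2^\omega\times 2^\omega)=M(\emptyset)>0$.

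Next I would reduce the membership ``$x\in B^c$'' to a statement about the closed set $F$. For each $x$ the section $F_x=\setof{y\suchthat (x,y)\in F}$ is closed and contained in $A_x^c$, so whenever $\mu(F_x)>0$ we have $\mu_*(A_x^c)>0$, i.e.\ $\I$ wins $G(0,A_x)$ by Theorem~\ref{thm:equiv}, i.e.\ $x\in B^c$. Hence $\setof{x\suchthat\mu(F_x)>0}\subseteq B^c$. Since $x\mapsto\mu(F_x)$ is Borel (indeed upper semicontinuous, as $F$ is closed), the sets $\setof{x\suchthat\mu(F_x)\geq\eta}$ are Borel, and a positive-measure closed subset of one of them yields, via the scaled-measure construction in the last paragraph of the proof of Theorem~\ref{thm:equiv}, a winning strategy for $\I$ in $G(0,B)$. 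Thus it suffices to show $\mu(\setof{x\suchthat\mu(F_x)>0})>0$, since $\mu_*(B^c)\geq\mu(\setof{x\suchthat\mu(F_x)>0})$ and, by Theorem~\ref{thm:equiv}, $\mu_*(B^c)>0$ gives that $\I$ wins $G(0,B)$.

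The crux is therefore to prove $\mu(\setof{x\suchthat\mu(F_x)>0})\geq M(\emptyset)$. Equivalently, I will show that the first marginal $\nu_1$ of $\nu$ (a scaled measure given by $\nu_1(N_u)=M_1(u):=\sum_{\abs{v}=\abs{u}}M(u,v)$, which satisfies $\nu_1\leq\mu$ and $\nu_1(2^\omega)=M(\emptyset)$) assigns measure zero to $B':=\setof{x\suchthat\mu(F_x)=0}$. I would establish this directly from $M$ by a level-by-level counting argument, in the spirit of Lemma~\ref{lem:ws1} and the leftover estimate in the proof of Theorem~\ref{thm:fub1}: for each $\eta>0$ one bounds the $\nu$-mass lying over those first-coordinate nodes whose fibre has $\mu$-content below $\eta$, obtaining a bound of the form $\nu_1(\setof{x\suchthat\mu(F_x)<\eta})\leq\eta$; letting $\eta\to 0$ gives $\nu_1(B')=0$, whence $\mu(\setof{x\suchthat\mu(F_x)>0})\geq\nu_1(\setof{x\suchthat\mu(F_x)>0})=M(\emptyset)>0$ using $\nu_1\leq\mu$.

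The hard part is precisely this last estimate, and I expect essentially all of the work to lie there. It must be carried out as a direct combinatorial argument about $M$ rather than by quoting the classical Fubini theorem: Theorem~\ref{thm:fub2} is exactly the engine supplying the nontrivial ``null sections imply null product'' direction of Theorem~\ref{thm:fubini}, so appealing to that direction would be circular. The subtlety that forces a \emph{uniform} bound, rather than a mere almost-everywhere statement, is that in $G(0,B)$ player $\II$ chooses the branch $x$: player $\I$ must guarantee $\mu(F_x)>0$ for \emph{every} branch through the support of the scaled measure it plays, not just for almost every $x$. This is the combinatorial replacement for the martingale-convergence and Radon--Nikodym content hidden in the usual measure-theoretic proof, and extracting the clean bound $\nu_1(\setof{x\suchthat\mu(F_x)<\eta})\leq\eta$ from the finite data of $M$ is the main obstacle.
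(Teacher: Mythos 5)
Your reduction framework is sound, but the proposal has a genuine gap at exactly the point you yourself flag as ``the main obstacle'': the estimate $\nu_1(\setof{x \suchthat \mu(F_x)<\eta})\leq\eta$ is asserted, not proved, and it is not a technicality --- it carries essentially all of the combinatorial content of the theorem. Everything surrounding it (the two-dimensional analogue of Lemma~\ref{lem:ws1}, the inclusion $\setof{x \suchthat \mu(F_x)>0}\subseteq B^c$, and the passage from a positive-measure closed subset of $B^c$ to a winning strategy for $\I$ via Theorem~\ref{thm:equiv}) is routine; what is left unproved is precisely a quantitative form of the ``null sections imply null product'' direction of Fubini, so as written the proposal reduces the theorem to a restatement of itself rather than proving it.

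That said, your route can be completed, and with tools already in the paper. The two key observations are: (i) since $F=[T]$ with $T$ the support of the scaled measure $M$, for every $x$ one has $\mu(F_x)=\inf_n \sum_{v\in 2^n,\ (x\restriction n,v)\in T}\mu(N_v)$ (a decreasing intersection of closed sets); and (ii) the rules of $G_2$ give $M(u,v)\leq\mu(N_u)\mu(N_v)$, so the marginal $M_1(u)=\sum_{v\in 2^{\abs{u}}}M(u,v)$ satisfies $M_1(u)\leq\mu(N_u)\sum_{v\colon (u,v)\in T}\mu(N_v)$. Now apply Lemma~\ref{lemma:scaledmeasureboundedawayfromzero} (from \S\ref{sec:unfolding}, which is independent of \S\ref{sec:fubini}) to the one-dimensional scaled measure $M_1$ with a small $\epsilon>0$: it yields a tree $T'$ with $\mu([T'])\geq M(\emptyset)-\epsilon>0$ such that every $u\in T'$ has $M_1(u)\geq\epsilon\mu(N_u)$. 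Combining this with (i) and (ii) forces $\mu(F_x)\geq\epsilon$ for every $x\in[T']$, so $[T']$ is a closed positive-measure subset of $B^c$, and Theorem~\ref{thm:equiv} gives that $\I$ wins $G(0,B)$; this also makes your uniformity worry moot, since one gets a single closed set all of whose branches have sections of measure at least $\epsilon$. For comparison, the paper's proof is genuinely different: it never constructs a measure for $\I$ at all, but dualizes via Corollary~\ref{lemma:playerswap} to a winning strategy $\tau$ for $\II$ in $G_2(1-\epsilon,A^c)$, builds from $\tau$ (using the $\delta_\tau$ machinery of Lemma~\ref{lemma:delta} and the live/dead-node bookkeeping of Theorem~\ref{thm:fub1}) a strategy for $\II$ in $G(1-\gamma,B^c)$ maintaining $\sum_{t\in 2^n}q_t<m_{x_0\dots x_{n-1}}(1-\beta)$, so that along any consistent play $x$ the live tree has measure at least $\beta$ and is contained in $A_x^c$, and then converts back by Corollary~\ref{lemma:playerswap}. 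Your approach, once the missing estimate is supplied, is arguably more in the spirit of Lemma~\ref{lem:ws1}; the paper's avoids ever needing a Fubini-type estimate by working on the $\II$ side throughout.
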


\begin{proof}
Suppose $\I$ wins $G_2(0, A)$, then by Corollary~\ref{lemma:playerswap},
$\II$ wins $G_2(1-\epsilon, A^c)$ for some $\epsilon >0$.  Note that
also by Corollary~\ref{lemma:playerswap}, it suffices to show that for
some $\gamma>0$, $\II$ wins $G(1-\gamma, B^c)$.

Assume $\tau$ is a winning strategy for $\II$ in $G_2(1-\epsilon, A^c)$.  Let $\gamma<\epsilon$ and let $\beta>0$ be such that $1-\gamma$ = $\frac{1-\epsilon}{1-\beta}$.  We will show that for any such $\gamma$, $\II$ wins $G(1-\gamma, B^c)$.

Suppose $\I$ plays $(m_0, m_1)$ in $G(1-\gamma, B^c)$, and let $\delta^\emptyset_\tau$ be as in the notation of Lemma~\ref{lemma:delta}.  Note that 
\[
\sum_{i, j \in 2} \delta^\emptyset_\tau(i, j) \leq 1-\epsilon =
(1-\gamma)(1-\beta) < \left(m_0 + m_1\right)(1-\beta)
\ <\mu(N_\emptyset)(1-\beta).
\]

This implies that there is some $x_0$ we can play so that 
\[\sum_{j \in 2} \delta^\emptyset_\tau(x_0, j) < m_{x_0}(1-\beta) < \mu(N_{x_0})(1-\beta).\]

As in the proof of Theorem~\ref{thm:fub1}, we declare the length one sequence $j$ to be dead
if $\delta^\emptyset_\tau(x_0, j) = (\mu \times \mu)(N_{x_0} \times N_j)$,
otherwise $j$ is declared to be live.
For each $j\in 2$ which is dead, set $q_j=\delta^\emptyset_\tau(x_0, j)=(\mu\times \mu)(N_{x_0}\times N_j)$.
For each $j\in 2$ which is live, we can choose moves $\hat{q}_j$ by $\I$ which assign values $q_j$ to the quadrant $(x_0, j)$ such that 
\[
\delta^\emptyset_\tau(x_0, j) < q_j < (\mu \times \mu)(N_{x_0} \times N_j),\] and
\[q_0 + q_1 < m_{x_0}(1-\beta),
\]
and also so that $\tau$ selects the quadrant $(x_0, j)$ in response to $\hat{q}_j$. 
This finishes the definition of first step of the strategy.

Suppose $\I$ plays $(m_{x_0 \concat 0}, m_{x_0 \concat 1})$ in response to our move $x_0$.
For each live node $j$ we have the $4$-tuple $\delta^{\hat{q}_j}_\tau$ from Lemma~\ref{lemma:delta}.
We set $\delta_j=\delta^{\hat{q}_j}_\tau$.
For $j$ a dead node, as in Theorem~\ref{thm:fub1} we set $\delta_j(i,k)=(\mu \times \mu)(N_{x_0\concat i}\times
N_{j \concat k})$.

By Lemma~\ref{lemma:delta} we have
\[
\sum_{i, j, k \in 2} \delta_j(i,k) \leq \sum_{j \in 2} q_j < m_{x_0}(1-\beta) =
\left(m_{x_0 \concat 0} + m_{x_0 \concat 1}\right)(1-\beta)
\]
and thus there exists a move $x_1$ we can play to maintain 
\[
\sum_{j, k \in 2} \delta_j(x_1, k)  <  m_{x_0 x_1}(1-\beta) < \mu(N_{x_0 x_1})(1-\beta).
\]
For $t=j \concat k\in 2^2$, we declare $t$ to be dead if either $j$ is dead or if
$\delta_j(x_1,k)=(\mu\times \mu)(N_{x_0 x_1}\times N_{j\concat k})$. If $t$
is a dead node, we set $q_t=(\mu\times \mu)(N_{x_0 x_1}\times N_{j\concat k})$.
For each live node $t$ we choose $q_t$ such that
$\delta^{\hat{q}_j}_\tau (x_1, k)< q_t<(\mu \times \mu)(N_{x_0 x_1} \times N_{t})$
and such that
\[
\sum_{t \in 2^2} q_t < m_{x_0 x_1} (1-\beta).
\]
For the live nodes $t$ we also pick moves $\hat{q}_t$ by $\I$  which
each assign the value $q_t$ to the corresponding quadrant and such that $\tau$
will respond to $\hat{q}_t$ by picking $(x_1,k)$. This ends the second step of the
construction.

Now similarly to the first step, for each $t=j \concat k \in 2^2$, we define $q_{t}$ by either $q_{t}= \delta^{\hat{q}_j}_\tau (x_1, k)=(\mu \times \mu)(N_{x_0 x_1} \times N_{t})$, or $\delta^{\hat{q}_j}_\tau (x_1, k)< q_t<(\mu \times \mu)(N_{x_0 x_1} \times N_{t})$ and in this case there is an associated move by $\I$, $\hat{q}_t$, for which $\tau$ selects $(x_1, k)$.  We also maintain that \[\sum_{t \in 2^2} q_t < m_{x_0 x_1} (1-\beta),\]
this finishes the second step of the induction.

In general, we maintain the inequality
\[
\sum_{t \in 2^n} q_t < m_{x_0 \dots x_{n-1}} (1-\beta)
\]
and for each $t \in 2^n$ we have that either $q_t$ is live, meaning
$q_t< (\mu \times \mu)(N_{x_0 \dots x_{n-1}}\times N_{t})$, in which case
we have a move $\hat{q}_t$ of $\I$ in $G_2$, or else $t$ is dead
and $q_t=(\mu \times \mu)(N_{x_0 \dots x_{n-1}}\times N_{t})$.

This finishes the definition of our strategy, and if we have some real
$x \in 2^\omega$ which is consistent with this strategy, then we have
also the numbers $q_t$ for every $t\in 2^{<\omega}$, defined by the
construction above.  We define $T$ again to be the tree of live nodes.

We maintained through the construction of the strategy that 
\[
\sum_{t \in 2^{n}} q_{t} < m_{x_0 \dots x_{n-1}} (1-\beta) < \mu(N_{x_0 \dots x_{n-1}}) (1-\beta).
\]
Note that for any node $t \in 2^{n} \setminus T$ we have $q_{t} = (\mu \times \mu) (N_{x_0 \dots {x_{n-1}}} \times N_{t})$, and thus
\[
\sum_{t \in 2^{n} \setminus T} q_{t} = \mu(N_{x_0 \dots x_{n-1}}) \sum_{t \in 2^{n} \setminus T} \mu(N_{t})
\]
and so
\[
\sum_{t \in 2^{n} \setminus T} \mu(N_{t}) \leq (1-\beta).
\]

Let $x$ be according to the strategy defined above, and let $T$ be the
corresponding tree.  We claim that $\I$ has a winning strategy in
$G(0, A_x)$, so in particular, $x \in B^c$.  Note that because $\tau$
is a winning strategy in $G_2(1-\epsilon, A^c)$, $[T] \subseteq
A_x^c$, and since for all $n$ we have $ \sum_{t\in 2^{n} \setminus T}
\mu(N_{t}) \leq (1-\beta)$, it follows that  $\mu([T]) \geq \beta >0$. By
Theorem~\ref{thm:equiv}, $\I$ wins $G(0, [T]^c)$.
\end{proof}

It is straightforward using Theorem~\ref{thm:equiv}
to see that Theorems~\ref{thm:fub1} and \ref{thm:fub2} imply
Theorem~\ref{thm:fubini}. Theorem~\ref{thm:fub1} immediately gives that (\ref{f1})
of Theorem~\ref{thm:fubini} implies (\ref{f2}) and (\ref{f3}) of \ref{thm:fubini}.
Similarly, Theorem~\ref{thm:fub2} gives that (\ref{f2}) implies (\ref{f1})
of \ref{thm:fubini}.

\section{The Unfolded Measure Game} \label{sec:unfolding}

The Banach-Mazur game, corresponding to category, has an ``unfolded version'' which
has many applications, such as the Baire property for analytic sets and continuous
uniformizations on comeager sets. We introduce here an unfolded version of the measure game
and show it can be used to direct proofs of the corresponding facts for measure.

\begin{definition}
The \emph{unfolded measure game} is the following game.
Let $F \subseteq 2^\omega \times \omega^\omega$, and $s \in [0, 1)$.  Let $A=\setof{x \in 2^\omega \suchthat \exists y \in \omega^\omega~(x, y) \in F}$ the game $G_{\text{unfolded}}(s, F)$ is played just like the measure game $G(s, A)$, except that player $\II$ must, in addition to producing $x \in 2^\omega$ such that there is some $y$ with $(x, y) \in F$, also produce such a $y$, by playing it digit by digit.  We do not require $\II$ to play a digit of $y$ during every round, but $\II$ must do so infinitely often, as the diagram below illustrates.
\begin{center}
\begin{games}
\game[game label=${G_{\text{unfolded}}(s, F)}$, move spacing=-0.25 cm]
{
{$m_0, m_1$}{$x_0$}
{$\dots$}{$\dots$}
{${m_{p_0 \concat 0}, m_{p_0 \concat 1}}$}{${x_{k_0}, y_0}$}
{$\dots$}{$\dots$}
{${m_{p \concat 0}, m_{p  \concat 1}}$}{$x_n$}
{$\dots$}{$\dots$}
{${m_{p_i \concat 0}, m_{p_i \concat 1}}$}{${x_{k_i}, y_i}$}
}
\end{games}
\end{center}

Player $\I$ must follow the rules:
\begin{itemize}
\item $\forall t \in 2^{<\omega},~ m_t \in \mathbb{Q}$ and $m_t\geq0$
\item $m_0+m_1 > s$
\item $\forall t \in 2^{<\omega},~ 0< \mu(N_t) \Rightarrow m_{t} < \mu(N_t)$
\item $\forall t \in 2^{<\omega}, ~ m_{t\concat 0}+m_{t \concat 1}= m_t$
\end{itemize}
and player $\II$ must follow the rules:
\begin{itemize}
\item $\forall n \in \omega, ~ x_n \in 2$
\item $\forall n \in \omega, ~ m_{x_0, \dots x_n} \neq 0$
\item $\forall i \in \omega, ~ \exists k_i $ such that on his $k_i$th move, $\II$ plays $x_{k_i}, y_i$
where $y_i \in \omega$
\end{itemize}

If both players follow the rules, then 
$\II$ wins the run of $G_{\text{unfolded}}(s, F)$ if and only if $(x, y) \in F$.

Note that the last rule for $\II$ is not a closed requirement for $\II$ and thus not a ``rule''
in the usual sense (it cannot be determined at any finite stage if $\II$ has followed
this requirement).
\end{definition}

The importance of the unfolded game is illustrated by the following theorem.
\begin{theorem} \label{theorem:unfolding}
Let $F \subseteq 2^\omega \times \omega^\omega$ and
$A=\setof{x \in 2^\omega \suchthat \exists y \in \omega^\omega~ (x, y) \in F}$.
Then if  $~\I$ \emph{(}or $\II$\emph{)} wins $G_{\text{unfolded}}(s, F)$ then
$\I$ \emph{(}resp.\ $\II$\emph{)} wins $G(s, A)$.
\end{theorem}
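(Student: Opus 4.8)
The statement splits into two independent implications, of very different character. The implication for $\II$ is a routine simulation, and I would dispose of it first. Suppose $\tau$ is a winning strategy for $\II$ in $G_{\text{unfolded}}(s,F)$. To win $G(s,A)$, player $\II$ runs $\tau$ internally: whenever $\I$ offers a pair $(m_{t\concat 0}, m_{t\concat 1})$ in $G(s,A)$, $\II$ hands it to $\tau$ as an $\I$-move of the unfolded game, plays the $x$-coordinate that $\tau$ returns, and simply discards the $y$-digits $\tau$ prescribes (there is no $y$-coordinate in $G(s,A)$). The only point to verify is that a legal $\I$-move of $G(s,A)$ is a legal $\I$-move of the unfolded game, which is immediate on comparing the two rule lists, the unfolded rules for $\I$ being a weakening of those for $G(s,A)$. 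Thus the simulated run is consistent with $\tau$ against a legal $\I$, so $\tau$ wins it and produces $(x,y)\in F$ with $x$ equal to $\II$'s actual play; hence $x\in A$ and $\II$ wins $G(s,A)$.

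The implication for $\I$ is the substance of the theorem. Let $\sigma$ be a winning strategy for $\I$ in $G_{\text{unfolded}}(s,F)$. By Theorem~\ref{thm:equiv} it suffices to produce a closed set $C\subseteq A^{c}$ with $\mu(C)>s$, and I would do this by building, in the style of Lemma~\ref{lemma:rosendal}, a tree $T\subseteq 2^{<\omega}$ carrying values $m_{u}\in[0,\mu(N_{u})]$ with $m_{\emptyset}>s$, with a running-deficit estimate as in that lemma, and with $[T]\subseteq A^{c}$; then $\mu([T])>s$ and $\I$ wins $G(s,A)$. The guiding idea is that $T$ should consist of exactly those $x$ which $\sigma$ can defend against \emph{every} witness, so that each branch carries no $y$ with $(x,y)\in F$.

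The real obstacle, and the reason this is not a one-line consequence of Lemma~\ref{lem:ws1}, is that the masses $\sigma$ offers at a node $t$ depend on the $y$-digits $\II$ has already revealed, and since the witness alphabet is all of $\omega$, revealing one more digit permits $\sigma$ to reallocate the mass $m_{t}$ between the two children at will --- in particular to zero one child and thereby \emph{force} $\II$'s navigation. Consequently $\sigma$ does not collapse to a single $y$-independent scaled measure, and dominating the infimum of $\sigma$'s offers over all reveals is hopeless (that infimum can vanish at both children simultaneously). The plan is therefore a level-by-level fusion: maintain, for each node surviving into $T$, a finite family of $\sigma$-consistent unfolded positions recording the relevant revealed $y$-prefixes; use the $\delta$-analysis of Lemma~\ref{lemma:delta}, now applied to $\I$'s offers against $\II$'s reveals, to choose child values $m_{u\concat 0}, m_{u\concat 1}$ that remain defensible; and charge the reveals by which $\sigma$ pushes $\II$ onto one child against the sibling, absorbing the unavoidable loss at level $n$ into a summable budget $\sum_{n}\epsilon_{n}<m_{\emptyset}-s$, which the slack $m_{\emptyset}>s$ provides.

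Granting the bookkeeping, correctness is clean: by construction every branch $x\in[T]$ is defensible against each reveal, so for every $y$ the run in which $\II$ plays $x$ and reveals $y$ stays on positive mass against $\sigma$, and since $\sigma$ wins it forces $(x,y)\notin F$; as $y$ was arbitrary this gives $x\in A^{c}$, i.e.\ $[T]\subseteq A^{c}$, while $\mu([T])>s$ follows from the deficit estimate exactly as in Lemma~\ref{lemma:rosendal}. I expect the fusion bookkeeping to be the main difficulty: one must show that $\sigma$'s winning guarantees, at every node, a split of the surviving mass that simultaneously preserves defensibility of at least one child and keeps the total deficit within budget --- that is, that $\sigma$ is never forced to starve both children of a node carrying non-negligible mass. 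Reconciling this with the infinitely many possible reveals is exactly where the hypothesis that $\sigma$ wins \emph{the unfolded game} must be used in full.
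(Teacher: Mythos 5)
Your $\II$ direction is correct and is exactly the paper's: ignore the auxiliary $y$-digits that $\tau$ prescribes, noting that every legal $\I$-move of $G(s,A)$ is legal in the unfolded game. For the $\I$ direction you have the right skeleton --- build a tree $T$ with $\mu([T])>s$ such that every $x\in[T]$ is, for \emph{every} $y\in\omega^\omega$, the $x$-part of a run consistent with $\sigma$, whence $[T]\subseteq A^c$ and Theorem~\ref{thm:equiv} finishes --- and this is also the paper's skeleton. But your proof stops exactly where the work is: ``granting the bookkeeping'' grants the theorem, because the claim that accommodating one more reveal costs only a summable amount $\epsilon_n$ of measure is precisely the non-trivial assertion, and you flag it yourself as unproven. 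Moreover, a per-level charging scheme faces a structural obstruction: when $\II$ reveals a fresh digit at a node $u$, $\sigma$ retains the total mass $m_u$ but may redistribute it within the cone $N_u$ arbitrarily, so the set of branches defensible after the reveal can be almost disjoint from the set planned before it; the loss from a single reveal is not locally small, and there is no sibling to ``charge'' it against. Also, Lemma~\ref{lemma:delta} cannot play the role you assign it: it analyzes strategies for $\II$ (the infimum of masses that force $\tau$ to choose a side), and it has no analogue for $\I$'s strategy $\sigma$; the relevant objects here are instead the scaled measures $\sigma$ plays under different reveal schedules, as in Lemma~\ref{lem:ws1}.

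The missing idea is the paper's Lemma~\ref{lemma:stable}, a delayed-reveal iteration closed out by a measure-theoretic pigeonhole, and it needs the pruning Lemma~\ref{lemma:scaledmeasureboundedawayfromzero} as a precondition. One first prunes so that on the current tree $S$ the scaled measure played by $\sigma$ is proportionally large: $M(v)>\epsilon\mu(N_v)$ for $v\in S$. To accommodate a new digit: reveal it at the earliest moment and record which nodes of $S$ the post-reveal measure forbids; for each frontier node $u$ of the forbidden region, instead \emph{conceal} the digit, navigate to $u$ under the old measure (possible since $M(u)>\epsilon\mu(N_u)>0$), and reveal there, which forces $\sigma$ to commit fresh mass $>\epsilon\mu(N_u)$ below $u$; iterate this on the nodes still forbidden. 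If at every stage a $\beta$-fraction of $[S]$ remained unrecovered, then each stage would force $\sigma$ to place mass $>\epsilon\beta\mu([S])$ on a fresh tree, which for late stages lies mostly outside $[S]$, and the paper shows these overflow sets are pairwise disjoint across stages --- impossible in a probability space. Hence finitely many stages recover a subtree of $S$ of relative measure $>1-\beta$ all of whose branches are defensible against the new reveal. Only with this lemma in hand --- applied along an enumeration of $\omega^{<\omega}$ in which prefixes precede extensions, with summable losses and re-pruning between stages --- does your fusion argument go through; without it, the proposal is a correct reduction of the theorem to its hardest step.
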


To prove this theorem, we first note that the case where $\II$ has a winning strategy in
$G_{\text{unfolded}}(s, F)$ immediately gives a winning strategy for $\II$ in
$G(s,A)$ simply by ignoring the extra auxiliary moves made by $\II$ in $G_{\text{unfolded}}(s, F)$.
To handle the case where $\I$ has a winning strategy in $G_{\text{unfolded}}(s, F)$ we need
a few technical lemmas.
We will show that for any winning strategy $\sigma$ for player $\I$ in $G_{\text{unfolded}}(s, F)$,
the set $\setof{x \in 2^\omega \suchthat \forall y, (x, y)~\text{is consistent with}~\sigma}$ is large enough
so that player $\I$ can win $G(s, A)$ by playing a scaled measure supported on a closed subset of it.

\begin{lemma}\label{lemma:scaledmeasureboundedawayfromzero}
Let $M$ be a scaled measure.  Let $\epsilon>0$ with $M(\emptyset)>\epsilon$ and define\\
\[A=\setof{t \in 2^{<\omega} \suchthat M(t) <\epsilon\mu(N_t)~\text{and}~\forall s \subsetneq t,
  ~ M(s) \geq\epsilon\mu(N_s)}.\] If we let 
  \[M' (t) = \begin{cases}M(t) -
    \sum_{\substack{s \in A \\ s \supseteq t}} M(s) &
    \text{if}~ \forall u \subseteq t,~ u \not \in A \\ 0 & \exists u \subseteq t,~ u \in A\end{cases}\]
then $M'$ is a scaled measure and $M'(\emptyset) >
M(\emptyset) - \epsilon$.  In particular, we have that if $T=\setof{t \in 2^{<\omega} \suchthat M'(t)>0}$, then $\mu([T]) \geq M(\emptyset) - \epsilon$ and every $t \in T$ satisfies $M(t) \geq \epsilon \mu(N_t)$.
\end{lemma}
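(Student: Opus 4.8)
The plan is to extract the combinatorial structure of $A$ first, then verify the three assertions in turn, in each case reducing to the additivity $M(t) = M(t\concat 0) + M(t\concat 1)$ and the domination $M(t) \le \mu(N_t)$ of the original scaled measure. The first observation I would record is that $A$ is an \emph{antichain}: if $s \subsetneq t$ with $s,t \in A$, then membership of $t$ in $A$ forces $M(s) \ge \epsilon\mu(N_s)$, contradicting $s \in A$. This is the structural backbone of the whole argument, since it makes the nodes of $A$ lying below any fixed node pairwise incomparable, and hence their cylinders $N_s$ pairwise disjoint.

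The central technical step, and the one I expect to be the main obstacle, is showing that $M'$ is a scaled measure, i.e.\ that it is non-negative and additive. For non-negativity I would first prove the auxiliary estimate that whenever $t$ is not at or below $A$ one has $\sum_{s \in A,\, s \supseteq t} M(s) \le M(t)$: refining to a level $n$ below all the relevant $A$-nodes, the antichain property makes the level-$n$ refinements of the cylinders $N_s$ disjoint subsets of $\{u \supseteq t \suchthat |u| = n\}$, so additivity of $M$ gives $\sum_s M(s) = \sum_s \sum_{u \supseteq s,\, |u|=n} M(u) \le \sum_{u \supseteq t,\, |u|=n} M(u) = M(t)$, and I would pass to the supremum over finite sub-antichains if $A$ is infinite. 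Additivity $M'(t) = M'(t\concat 0) + M'(t\concat 1)$ I would verify by a short case split on whether $t$ or its children lie at or below $A$: if $t$ is at or below $A$, all three values vanish; otherwise the $A$-nodes strictly extending $t$ partition into those extending $t\concat 0$ and those extending $t\concat 1$, which combined with $M(t) = M(t\concat 0)+M(t\concat 1)$ gives the identity, using that a child $t\concat i \in A$ (if any) has no $A$-node strictly below it, so its full mass $M(t\concat i)$ is exactly what gets subtracted.

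With $M'$ established as a scaled measure, the bound $M'(\emptyset) > M(\emptyset)-\epsilon$ is routine. Since $M(\emptyset) > \epsilon = \epsilon\mu(N_\emptyset)$ we have $\emptyset \notin A$, so $M'(\emptyset) = M(\emptyset) - \sum_{s\in A} M(s)$. Each $s\in A$ satisfies $M(s) < \epsilon\mu(N_s)$ (which forces $\mu(N_s)>0$), and as the $N_s$ are disjoint, $\sum_{s\in A}\mu(N_s) \le 1$; the termwise-strict inequality $\epsilon\mu(N_s) - M(s) > 0$ survives summation, yielding $\sum_{s\in A} M(s) < \epsilon\sum_{s\in A}\mu(N_s) \le \epsilon$, and hence the claim (with the degenerate case $A = \emptyset$ giving $0 < \epsilon$ directly).

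Finally, for $T$ the support of $M'$, the estimate $\mu([T]) \ge M'(\emptyset)$ is exactly the argument of Lemma~\ref{lem:ws1} applied to $M'$ in place of the strategy-induced measure, so $\mu([T]) \ge M'(\emptyset) > M(\emptyset) - \epsilon$. For the last assertion I would argue by contradiction: if some $t \in T$ had $M(t) < \epsilon\mu(N_t)$, then the shortest initial segment $s_0 \subseteq t$ with $M(s_0) < \epsilon\mu(N_{s_0})$ would, by minimality, lie in $A$; but then $t$ is at or below $A$, so $M'(t) = 0$ and $t \notin T$, a contradiction.
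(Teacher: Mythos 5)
Your proof is correct and takes essentially the same approach as the paper's: the antichain property of $A$ is the structural key, from which additivity of $M'$, the bound $\sum_{s \in A} M(s) < \epsilon \sum_{s\in A}\mu(N_s) \leq \epsilon$, and the properties of the support tree $T$ all follow. The only difference is one of detail: you carefully spell out the steps the paper dismisses as ``immediate'' (non-negativity of $M'$ via the level-$n$ refinement estimate, the case split for additivity, the survival of strict inequality under summation, and the minimal-initial-segment argument for the final claim), all of which check out.
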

\begin{proof}\
First note that $A$ is an antichain, i.e. no two members are
subsequences of each other, thus the additivity of $M'$ is
immediate from the additivity of $M$ and the definition of $M'$.
Now $M'(\emptyset) = M(\emptyset) - \sum_{t \in A}
M(s)$ and $\sum_{t \in A} M(s) < \sum_{t \in A}
\epsilon \mu(N_t) \leq \epsilon$ again since $A$ is an
antichain.  The facts about the tree $T$ are immediate, since $T$ is the support of $M'$, and we removed all nodes $t$ with $M(t) < \epsilon \mu(N_t)$.
\end{proof}

\begin{lemma}\label{lemma:stable}
Let $\sigma$ be a strategy in $G_{\text{unfolded}}(s, \cdot)$ for
$\I$.  Let $p$ be a position in $G_{\text{unfolded}}(s, \cdot)$
consistent with $\sigma$ with $x$-moves $t=x_0 \dots x_n$ and
$y$-moves $y_0 \dots y_k$ and let $y_{k+1}=i\in \omega$.  Let $M$ be
the scaled measure played by $\sigma$ from $p$ in which no more
$y$-moves after $y_0 \dots y_k$ are played.  Let $\epsilon>0$ and let
$S$ be a subtree of $\setof{u \in 2^{<\omega} \suchthat \forall v
\subseteq u~ M(v)>\epsilon\mu(N_v)}$ with $\mu([S])>0$.  Then for every 
$\beta>0$, there is a tree $T \subseteq S$  such that
\begin{enumerate}
\item $\mu([T]) > (1-\beta)\mu([S])$
\item For every $x \in [T]$ there is an initial segment $u$ of $x$
and  a position $p'$ consistent with
$\sigma$ extending $p$ such that the sequence of $x$-moves of $p'$ is $u$, and the
sequence of $y$-moves of $p'$ is $y_0 \dots y_{k+1}$.
\end{enumerate}
\end{lemma}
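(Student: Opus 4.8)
The plan is to build $T$ by following the branches of $S$ and, along each one, inserting the single new digit $y_{k+1}=i$ at a suitable node, all the while keeping track of how much $\mu$-mass the strategy $\sigma$ is forced to discard when it reacts to this new $y$-move. The tool doing the real work is Lemma~\ref{lemma:scaledmeasureboundedawayfromzero}, which trims a scaled measure to a subtree on which it dominates a fixed multiple of $\mu$ at the cost of an arbitrarily small amount of mass; the rest is a careful accounting argument.

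The first thing I would record is the key consistency observation that makes any insertion legal. Since $M$ is the scaled measure $\sigma$ plays from $p$ when no further $y$-digit is ever played, the values $\sigma$ commits to along a sequence of $x$-moves agree with $M$ right up until the move on which a new $y$-digit is actually played: before that move the entire history, including $y_0\dots y_k$, is identical to the run producing $M$. Hence for any node $u\in S$ the value already assigned to $u$ equals $M(u)$, and $u\in S$ gives $M(u)>\epsilon\mu(N_u)>0$, so $\II$ may legally play $(j,y_{k+1})$ on the move reaching $u=u^-\concat j$. This yields a position $p'$ extending $p$, consistent with $\sigma$, whose $x$-moves are $u$ and whose $y$-moves are $y_0\dots y_{k+1}$, so condition (2) will hold for every branch through such a $u$. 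Moreover the $y$-move does not alter the budget, so the scaled measure $M_u$ that $\sigma$ plays onward from $p'$ (again with no later $y$-digits) has $M_u(\emptyset)=M(u)$, and by the additivity argument of Lemma~\ref{lem:ws1} the support of $M_u$ has $\mu$-measure at least $M(u)$.

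To obtain the quantitative bound in (1), I would fix $\beta>0$, choose parameters $\epsilon_j>0$ with $\sum_j \epsilon_j < \beta\,\mu([S])$, process $[S]$ level by level, and at each chosen insertion node $u$ apply Lemma~\ref{lemma:scaledmeasureboundedawayfromzero} to $M_u$ to pass to a subtree above $u$ on which $M_u$ dominates $\epsilon_j\mu$, discarding less than $\epsilon_j$ of mass; the retained subtrees, taken over a maximal antichain of insertion nodes in $S$, are assembled into a single $T\subseteq S$. The \textbf{main obstacle} is exactly this measure bookkeeping: when $\sigma$ reacts to $y_{k+1}=i$ it may redistribute its scaled measure, so the support of $M_u$ need not lie inside $S$ above $u$, and one cannot simply union the raw supports. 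The delicate point is therefore to delay each insertion to a node deep enough, and to take the trimming parameters $\epsilon_j$ small enough, that the mass $\sigma$ throws away at each insertion is dominated by the local $\mu$-mass of $[S]$ there; once the discarded pieces are arranged to sum to less than $\beta\,\mu([S])$, we obtain $\mu([T])>(1-\beta)\mu([S])$ and conditions (1) and (2) follow as above.
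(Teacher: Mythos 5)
Your first paragraph is correct and matches the paper's own setup: for each $u\in S$ the value $\sigma$ has assigned to $u$ before any reveal equals $M(u)>\epsilon\mu(N_u)>0$, so $\II$ may legally play $y_{k+1}$ on the move reaching $u$, and the resulting scaled measure $M_u$ satisfies $M_u(u)=M(u)$, so its support has $\mu$-measure at least $M(u)$. You also correctly name the central obstacle: when $y_{k+1}$ is revealed at $u$, the strategy $\sigma$ may redistribute its mass so that the support of $M_u$ lies entirely outside $S$. The gap is that your proposed remedy --- a single maximal antichain of insertion nodes, each trimmed by Lemma~\ref{lemma:scaledmeasureboundedawayfromzero} with parameters $\epsilon_j$ summing to less than $\beta\,\mu([S])$ --- cannot overcome that obstacle. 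No choice of insertion depth and no choice of trimming parameters constrains \emph{where} $\sigma$ places the mass of $M_u$: Lemma~\ref{lemma:scaledmeasureboundedawayfromzero} controls only how much mass is lost in passing to the subtree where a given scaled measure dominates $\epsilon\mu$, not the location of that subtree. Quantitatively, one round of insertions along an antichain $\setof{u_j}$ guarantees capturing only about $\sum_j M(u_j)$ of mass, which may be barely more than $\epsilon\,\mu([S])$ --- nowhere near $(1-\beta)\mu([S])$ --- and that mass need not meet $S$ at all, so both item (1) and the requirement $T\subseteq S$ fail under your construction.

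The missing idea is an iteration together with a disjointness (mass-accounting) argument that forces the iteration to terminate. The paper reveals $y_{k+1}$ immediately and lets $T_0$ be the support of the resulting scaled measure; it then forms the frontier $A_0$ of nodes of $S$ newly forbidden by $\sigma$, delays the reveal to each $u\in A_0$ to obtain trees $T_1(u)$, and iterates, setting $S_\ell=S\cap(T_0\cup\dots\cup T_\ell)$. The crux is the claim that some $S_\ell$ already satisfies $\mu([S_\ell])>(1-\beta)\mu([S])$, and the proof of that claim is the step your sketch has no counterpart for: if it fails, then at every stage the uncovered part of $[S]$ has measure at least $\beta\mu([S])$, and since every frontier node $u$ satisfies $M(u)>\epsilon\mu(N_u)$, each new tree satisfies $\mu([T_{\ell+1}])>\epsilon\beta\mu([S])$; on the other hand, once the increasing sequence $\mu([S_\ell])$ has nearly converged, $\mu([T_{\ell+1}]\cap[S])\le\frac{\epsilon\beta}{2}\mu([S])$, whence $\mu([T_{\ell+1}]\setminus[S])>\frac{\epsilon\beta}{2}\mu([S])$ for all large $\ell$. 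Since the sets $[T_{\ell+1}]\setminus[S]$ are pairwise disjoint, this contradicts the finiteness of $\mu$. In other words, each round of evasion costs $\sigma$ a fixed positive amount of measure committed disjointly outside $[S]$, so it can evade only finitely often; that termination argument is the real content hiding behind your phrase ``delay each insertion to a node deep enough,'' and without it the proof does not go through.
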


\begin{proof}
Let $\beta>0$ be fixed and suppose we have the hypotheses of the
lemma.  In particular, we have the tree $S$ and the sequence $t$ and
the new $y$-move $y_{k+1}$. 

Our plan of attack is to consider playing the move $y_{k+1}$ against $\sigma$ at the earliest possible moment and collecting all the the finite sequences of $x$-moves which $\sigma$ would no longer allow us to reach.  We then consider what would happen if we had, instead, concealed the move $y_{k+1}$ for a while, and had first moved against $\sigma$ to one of these newly forbidden nodes.  At this node we then consider revealing $y_{k+1}$ and again observe which nodes $\sigma$ would forbid.  By repeating this process, we then will argue that $\sigma$ cannot stop us from accessing almost all of the tree $S$, with respect to $\mu$.

Now for each $u \in S$ with $t \subseteq u$,
let $M_u$ be the scaled measure on $N_u$ obtained from $\sigma$
by playing $p$ and then no additional $y$ moves until $y_{k+1}$ is played at $u$, and then no further $y$-moves. 
Let $T_0$ be the support of $M_t$. Let $S_0=T_0 \cap S$.
Let $A_0$ be the nodes $u$ of $S$ such that $u \notin T_0$ but $v \in T_0$
for all proper initial segments $v$ of $u$. 
For each $u \in A_0$, let $T_1(u)$
be the support of $M_u$. Let $T_1= \bigcup_u T_1(u)$. Let
$S_1=S \cap (T_0 \cup T_1)$.
Let $A_1$ be the nodes $u$ of $S_0$  such that $u \notin T_1$, but $v \in T_1$
for all proper initial segments $v$ of $u$. 
Continue this process to define the trees $T_\ell$ and $S_\ell$ for all $\ell$.

We claim that there is some $\ell$ such
that $\mu\left([S_k]\right)>(1-\beta)\mu([S])$.  Suppose not, let
$c=\lim_{\ell \to \infty} \frac{\mu([S_\ell])}{\mu([S])} \leq (1-\beta)$.
Let $k_0$ be large enough so that for any $k>k_0$, we have
$\frac{\mu([S_\ell])}{\mu([S])} > c-\frac{\epsilon \beta}{2}$.  For any $\ell >
\ell_0$ we have $\mu([S] \setminus[S_\ell])\geq\beta\mu([S])$, so by
construction, we have that

\begin{equation*}
\begin{split}
\mu ([T_{\ell+1}]) & = \mu(\bigcup_{u\in A_\ell} [T_{\ell+1}(u)])\\
& > \epsilon  \sum_{u\in A_\ell} \mu(N_u)\\
& \geq \epsilon \mu([S]\setminus[S_\ell])\\
& \geq  \epsilon \beta \mu([S]).
\end{split}
\end{equation*}
since $\mu([T_{\ell+1}(u)]) \geq M_u(u)= M(u) >\epsilon \mu(N_u)$ for any $u \in S$. 
On the other hand, by the choice of $\ell_0$ we have
$\mu([T_{\ell+1}] \cap [S]) \leq \frac{\epsilon \beta}{2} \mu([S])$. From this we have that
$\mu ([T_{\ell+1}]\setminus [S]) > \frac{\epsilon \beta}{2} \mu([S])$.
But the sets $[T_{\ell+1}]\setminus [S]$ are pairwise disjoint, and this contradiction
proves the claim, and so the lemma. To see these sets are pairwise disjoint, suppose
$x \in ([T_{\ell+1}]\setminus [S])) \cap ([T_{m+1}]\setminus [S])$ where $\ell<m$.
So, $x \in [T_{m+1}(u)]$ for some $u \in A_m$. So, $u\in S\setminus S_m=S\setminus (T_0\cup\dots \cup T_m)$.
In particular, $u \notin T_{\ell+1}$. But then $x \notin [T_{k+1}]$, a contradiction.

\end{proof}

\begin{proof}[Proof of Theorem~\ref{theorem:unfolding}]

Suppose $\sigma$ is a winning strategy for $\I$ in $G_{\text{unfolded}}(s, F)$.

Take $M_0$ the scaled measure according to $\sigma$ (a winning
strategy for $\I$ in $G_{\text{unfolded}}(s, F)$) in which $\II$ plays
no $y$ moves. Note that $M_0(\emptyset)>s$. Let $\delta=M_0(\emptyset)-s$.

Let $\{ t_i\}_{i \in \omega}$ enumerate $\omega^{<\omega}\setminus\{ \emptyset\}$ with the property that
any proper initial segment of $t$ precedes $t$ in the enumeration.

Apply Lemma \ref{lemma:scaledmeasureboundedawayfromzero} to $M_0$ with some small $\eta<\frac{\delta}{2}$ to obtain a tree $T_0$ on which $M_0(t)\geq \eta \mu(N_t)$.
In particular, $\mu([T_0]) > s+\frac{\delta}{2}$.
Next apply Lemma~\ref{lemma:stable} with $y_0,\dots, y_k$ the empty sequence and
$(y_{k+1})=t_0$ (note $t_0$ is a sequence of length $1$) to produce $T'_0\subseteq T_0$
such that
\begin{enumerate}
\item \label{wa}
$\mu([T'_0]) > s+\frac{\delta}{2}$.
\item \label{wb}
For every $x \in [T'_0]$ and $t$ either $t=t_0$ or $t=\emptyset$, 
there is an initial segment $u$ of $x$  and a position $p=p(x,t)=p(u,t)$
(that is, $p$ is determined just from the initial segment $u$)
consistent with $\sigma$ where the $x$-moves
consist of $u$ and the $y$-moves consist of $t$. 

\end{enumerate}

For every $u \in T'_0$, the proof of Lemma~\ref{lemma:stable} gives a canonical
position $p=p(u,t_0)$, as in (\ref{wb}) above, in which $t_0(0)$ is played at a
particular round $i(u,t_0,0)< |u|$ of the game. Note that if $u \subseteq v$ are in $T'_0$
and $i(u,t_0,0)$ is defined then so $i(v,t_0,0)$ and $i(v,t_0,0)=i(u,t_0,0)$.

We now need to set up the tree to repeat our use of Lemma~\ref{lemma:stable}.  By applying Lemma~\ref{lemma:scaledmeasureboundedawayfromzero} to the scaled measure $M_u$ for each sequence of $x$-moves 
$u \in T'_0$ such that $i(u,t_0,0)=|u|-1$, we may assume that for all $u \in T'_0$
with corresponding position $p=p(u,t_0)$ we have that $\sigma(p)=(m_{u\concat 0}, m_{u\concat 1})$
has total value $m_u=m_{u\concat 0}+ m_{u\concat 1} > \epsilon_0 \mu([N_u])$
for some fixed $\epsilon_0$ small enough to maintain (\ref{wa}) above.

In general, at step $n$ we produce a tree $T_n$ (using
Lemma~\ref{lemma:stable} on $T'_{n-1}$) and then a $T'_n \subseteq T_n$
(using Lemma~\ref{lemma:scaledmeasureboundedawayfromzero})
with $T_n \subseteq T'_{n-1}$ and such that 
\begin{enumerate}
\item 
$\mu([T'_n]) > s+\frac{\delta}{2}$.
\item 
For every $x \in [T'_n]$ and for every $t_m$, $m \leq n$ (or $t=\emptyset$)
there is an inital segment $u$ of $x$  and a position $p=(x,t_m)=p(u,t_m)$
(that is, $p$ is determined just from the initial segemnt $u$)
consistent with $\sigma$ where the $x$ moves
consist of  $u$ and the $y$ moves consist of $t_m$. Also, if $t_m \subseteq t_k$
then $p(x,t_m)\subseteq p(x,t_k)$. 
\item
For $x$, $t_m$, $u$, $p$ as above, if $\sigma(p)=(m_{u\concat 0}, m_{u\concat 1})$
then $m_u=m_{u\concat 0}+ m_{u\concat 1} > \epsilon_n \mu([N_u])$
for some fixed $\epsilon_n>0$. 

\end{enumerate}

Let $T=\bigcap_n T'_n$, so $T$ is a subtree of $2^{<\omega}$, and
$\mu([T])\geq s+\frac{\delta}{2}$. We will show that 
\[[T] \subseteq \setof{x \in 2^\omega \suchthat \forall y, (x, y)~\text{is consistent with}~\sigma}.\] Let $x \in [T]$, we show that for every
$y \in \omega^\omega$ that there is a run $\vec{p}$ according to $\sigma$
which produces the pair $(x,y)$. Let $t_{g(i)}=y \restriction i$. Let
$\vec p=\bigcup_i p(x,t_{g(i)})$. Since $x \in [T]\subseteq [T_{g(i)}]$ for every $i$,
this is well-defined. Notice that $\vec{p}$ has result $(x, y)$ and is consistent with $\sigma$, thus $(x, y) \not\in F$.  Thus $\forall y\in \omega^\omega\ (x,y)\notin F$ and so $x \notin A$. That is, $[T] \subseteq A^c$. Since $\mu([T])>s$,
$\I$ wins $G(s,[T]^c)$ and so wins $G(s,A)$.

\end{proof}

\begin{corollary}\label{cor:analyticmeas}
Analytic subsets of $2^\omega$ are universally measurable.
\end{corollary}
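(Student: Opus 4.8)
The plan is to deduce the corollary directly from Theorem~\ref{theorem:unfolding} together with the correspondence between the game and measure established in Theorem~\ref{thm:equiv} and its corollary. Let $A \subseteq 2^\omega$ be analytic. By definition of $\bS^1_1$, there is a closed set $F \subseteq 2^\omega \times \omega^\omega$ such that $A = \setof{x \in 2^\omega \suchthat \exists y\in\omega^\omega~(x,y)\in F}$. Recall from the corollary to Theorem~\ref{thm:equiv} that $A$ is $\mu$-measurable provided the game $G(s,A)$ is determined for every $s \in [0,1)$. So, fixing an arbitrary Borel probability measure $\mu$ on $2^\omega$ and an arbitrary $s \in [0,1)$, it suffices to prove that $G(s,A)$ is determined, and this must hold with no determinacy hypothesis beyond $\zfc$ (since the conclusion is a $\zfc$ theorem).

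First I would pass to the unfolded game $G_{\text{unfolded}}(s,F)$. The key observation is that because $F$ is closed, the payoff condition ``$(x,y)\in F$'' for player $\II$ is a closed condition on the run: if $(x,y)\notin F$ then some finite initial segment of $(x,y)$ already witnesses this. The only non-closed requirement in the game is $\II$'s obligation to play a digit of $y$ infinitely often; but a player who fails to meet this requirement produces some $y$ that is eventually constant (or rather, a run in which only finitely many $y$-digits appear), and one arranges the bookkeeping so that this failure is charged as a loss for $\II$. With $F$ closed, the resulting game for $\I$ is essentially a closed game (equivalently, the game is Borel — indeed the auxiliary integer moves keep us in an integer game). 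By Martin's theorem on Borel determinacy, cited in the introduction, $G_{\text{unfolded}}(s,F)$ is determined in $\zfc$.

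Next I would invoke Theorem~\ref{theorem:unfolding} to transfer this determinacy back down to $G(s,A)$. That theorem states that if $\I$ (respectively $\II$) wins $G_{\text{unfolded}}(s,F)$, then $\I$ (respectively $\II$) wins $G(s,A)$, where $A$ is exactly the projection of $F$ as above. Since one of the two players wins $G_{\text{unfolded}}(s,F)$ by the previous paragraph, the corresponding player wins $G(s,A)$, so $G(s,A)$ is determined. As $s \in [0,1)$ was arbitrary, the corollary to Theorem~\ref{thm:equiv} yields that $A$ is $\mu$-measurable; since $\mu$ was an arbitrary Borel probability measure, $A$ is universally measurable.

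The main obstacle I anticipate is the first transfer step: justifying that $G_{\text{unfolded}}(s,F)$ is determined without any extra axioms, i.e. verifying that the game really does reduce to a Borel (indeed closed-type) game on $\omega$ once $F$ is closed. One must handle carefully the fact that the ``play $y$ infinitely often'' requirement is not itself closed (as the excerpt explicitly warns). The clean way to do this is to observe that the $\I$-winning case is what we exploit through Theorem~\ref{theorem:unfolding}, and that from $\I$'s perspective the relevant winning condition — $\II$ fails to produce a genuine $y$ with $(x,y)\in F$ — is a $\bG$-condition of low complexity; combined with the reduction of $\I$'s rational moves to integers and the closedness of $F$, Borel determinacy applies. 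The remaining points, namely the unwinding of the definitions of analytic set and of universal measurability, are routine.
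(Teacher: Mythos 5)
Your proof is correct and follows essentially the same route as the paper: unfold to $G_{\text{unfolded}}(s,F)$ with $F$ closed, observe that this game is Borel and hence determined in $\zfc$ (the paper pins the complexity down as $\bP^0_2$ for $\II$: the infinitely-often requirement on $y$-moves is $\bP^0_2$ and the payoff $(x,y)\in F$ is closed), then transfer determinacy to $G(s,A)$ via Theorem~\ref{theorem:unfolding} and conclude $\mu$-measurability for arbitrary $\mu$ from Theorem~\ref{thm:equiv}. The only slip is describing the game as ``essentially a closed game'' for $\I$ --- it is in fact $\bS^0_2$ for $\I$ precisely because of $\II$'s infinitely-often obligation --- but since you immediately fall back on Borel determinacy, this does not affect the argument.
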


\begin{proof}
Let $\mu$ be a Borel probability measure on $2^\omega$ and consider the measure game for $\mu$.  Let $A$ be an analytic subset of $2^\omega$, let $F \subseteq 2^\omega
\times \omega^\omega$ be closed such that $A = \setof{x \in 2^\omega
\suchthat \exists y \in \omega^\omega ~ (x, y) \in F}$. Then
$G_{\text{unfolded}}(s, F)$ is determined for all $s$, as it is a $\bP^0_2$ game
for $\II$. But since a
player winning the unfolded game implies that the same player wins
$G(s, A)$, then $G(s, A)$ is determined for all $s$, so the inner and
outer measures of $A$ are equal, thus $A$ is measurable.
\end{proof}

Likewise, we immediately have the following consequence.

\begin{corollary}\label{cor:detmeas}
Assuming $\bP^1_n$-determinacy, all $\bS^1_{n+1}$ sets are universally measurable.
\end{corollary}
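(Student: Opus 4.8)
The plan is to follow the template of Corollary~\ref{cor:analyticmeas}, replacing its appeal to Borel determinacy with the determinacy hypothesis. Fix a Borel probability measure $\mu$ on $2^\omega$ and a $\bS^1_{n+1}$ set $A \subseteq 2^\omega$. By the normal form for the projective hierarchy, write $A=\setof{x \in 2^\omega \suchthat \exists y \in \omega^\omega~(x,y)\in F}$ for some $\bP^1_n$ set $F \subseteq 2^\omega \times \omega^\omega$. For each $s \in [0,1)$ I would consider the unfolded game $G_{\text{unfolded}}(s,F)$. By Theorem~\ref{theorem:unfolding}, a win for either player in the unfolded game yields a win for that \emph{same} player in $G(s,A)$, so once the unfolded game is shown to be determined, $G(s,A)$ is determined for all $s$, and the corollary to Theorem~\ref{thm:equiv} (determinacy of $G(s,A)$ for all $s$ implies $\mu$-measurability of $A$) applies. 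Since $\mu$ is an arbitrary Borel probability measure, this gives universal measurability.

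The crux is bounding the complexity of the payoff of the integer game $G_{\text{unfolded}}(s,F)$ and deducing its determinacy from $\bP^1_n$-determinacy. The map sending a run of the unfolded game to the pair $(x,y)$ it produces is continuous, so the condition $(x,y)\in F$ pulls back to a $\bP^1_n$ condition on runs. The finite-stage rules for both players are Borel (indeed open or closed events), and the only nonclosed rule — that $\II$ emits a digit of $y$ infinitely often — is $\bP^0_2$. Hence $\II$'s winning set is an intersection of the $\bP^1_n$ set $\setof{(x,y)\in F}$ with Borel and $\bP^0_2$ sets, unioned with the open event that $\I$ is first to break a rule; since $\bP^1_n$ (for $n \geq 1$) contains all Borel sets and is closed under finite intersections and unions, $\II$'s winning set is $\bP^1_n$, and so $\I$'s payoff set is $\bS^1_n$. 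Recalling that $\det(\bP^1_n)\Leftrightarrow\det(\bS^1_n)$ — because a game is determined exactly when its dual is, and a pointclass here is closed under continuous preimages — the hypothesis of $\bP^1_n$-determinacy gives that $G_{\text{unfolded}}(s,F)$ is determined.

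Combining the two steps: for every $s\in[0,1)$ the unfolded game is determined, hence by Theorem~\ref{theorem:unfolding} the game $G(s,A)$ is determined, hence $A$ is $\mu$-measurable, and therefore universally measurable. I expect the main obstacle to be the careful bookkeeping in the complexity computation: one must verify that folding the $\bP^0_2$ ``infinitely often'' rule (which the paper explicitly notes is not a closed requirement) together with the closed finite-stage rules into the $\bP^1_n$ condition $\setof{(x,y)\in F}$ keeps $\II$'s payoff inside $\bP^1_n$, and that the hypothesis, phrased for $\bP^1_n$, genuinely covers a game whose winning condition is presented from $\II$'s side. The closure properties of $\bP^1_n$ handle the former, and the equivalence $\det(\bP^1_n)\Leftrightarrow\det(\bS^1_n)$ handles the latter.
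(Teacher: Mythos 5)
Your proposal is correct and is essentially the paper's own argument: the paper derives this corollary ``likewise'' from the proof of Corollary~\ref{cor:analyticmeas}, i.e., by writing the $\bS^1_{n+1}$ set as the projection of a $\bP^1_n$ set $F$, observing that $G_{\text{unfolded}}(s,F)$ is then a $\bP^1_n$ game for $\II$ (hence determined under $\bP^1_n$-determinacy, via the standard dual-pointclass equivalence), and concluding via Theorem~\ref{theorem:unfolding} and Theorem~\ref{thm:equiv}. Your expanded complexity bookkeeping for the payoff set fills in exactly the details the paper leaves implicit.
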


We should note that there is an argument for Corollary~\ref{cor:detmeas}
using an unfolded version of a so-called \emph{covering} game, which can be
found in \cite{Moschovakis} (6G.12), where it is attributed to Kechris.
This unfolded covering game argument, however,  does not seem to
directly imply Corollary~\ref{cor:analyticmeas}.

We also have that continuous uniformization is a direct consequence of this unfolding.

\begin{theorem}
Let $R\subseteq  2^\omega \times\omega^\omega$ with $\dom(R)=2^\omega$ and suppose $G_{\text{unfolded}}(0, R)$ is determined.
Then for any $\epsilon >0$, $\exists A \subseteq 2^\omega$, with $A$ closed, 
such that $\mu(A) > 1-\epsilon$ and a continuous function $f\colon A\to \omega^\omega$
such that for all $x \in A$ we have $(x,f(x))\in R$. 
\end{theorem}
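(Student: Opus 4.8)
The plan is to first convert the determinacy hypothesis into a winning strategy for $\II$ in the unfolded game, and then to mine that strategy for a continuous uniformizing function by running the tree construction of Lemma~\ref{lemma:rosendal}. First I would observe that since $\dom(R)=2^\omega$, the projection $A=\setof{x \in 2^\omega \suchthat \exists y\in\omega^\omega\,(x,y)\in R}$ is all of $2^\omega$, so $A^c=\emptyset$ and $\mu_*(A^c)=0$. By Theorem~\ref{thm:equiv}, player $\I$ does not win $G(0,A)$, and hence by the contrapositive of the $\I$-direction of Theorem~\ref{theorem:unfolding}, player $\I$ does not win $G_{\text{unfolded}}(0,R)$. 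Since we are assuming this game is determined, $\II$ must have a winning strategy $\tau$.

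Next I would run the construction of Lemma~\ref{lemma:rosendal} with $s=0$ applied to this $\tau$. The $\delta_\tau$ analysis of Lemma~\ref{lemma:delta} applies equally to the unfolded game, since at each round $\tau$ still selects a side and the auxiliary $y$-moves are merely carried along; thus the inductive tree-building argument of Lemma~\ref{lemma:rosendal} goes through verbatim. This produces a tree $T_\epsilon\subseteq 2^{<\omega}$ with $\mu([T_\epsilon])>1-\epsilon$ together with, for each node $u\in T_\epsilon$, a finite play $p(u)$ of $G_{\text{unfolded}}(0,R)$ consistent with $\tau$ in which $\II$'s side-moves are exactly $u$, and such that $u\subseteq v$ implies $p(u)\subseteq p(v)$. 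I would then set $A=[T_\epsilon]$, which is closed with $\mu(A)>1-\epsilon$.

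To define $f$, I would take $x\in A$ and form the run $p(x)=\bigcup_{u\subseteq x}p(u)$, which by coherence is a single infinite run of $G_{\text{unfolded}}(0,R)$ consistent with $\tau$. Because $\tau$ obeys the rules of the unfolded game it emits a $y$-digit infinitely often, so this run produces some $y\in\omega^\omega$; because $\tau$ is winning, $(x,y)\in R$. Setting $f(x)=y$ gives the desired uniformization, and for all $x\in A$ we indeed have $(x,f(x))\in R$. Continuity is then immediate: the digit $y_i$ is played by $\tau$ at some finite stage, namely within $p(u)$ for a finite initial segment $u\subseteq x$, so $f(x)\restriction(i+1)$ is determined by an initial segment of $x$.

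The main thing to verify carefully is that the Lemma~\ref{lemma:rosendal} construction transports to the unfolded setting without disruption from $\II$'s auxiliary moves, and in particular that the plays $p(u)$ genuinely cohere so that the produced $y$ is total and varies continuously with $x$. This hinges on the fact that in Lemma~\ref{lemma:rosendal} the play $p(u)$ is determined solely by $u$ (with $\I$'s moves manufactured so that $\tau$ is forced onto the nodes of $u$), so that extending $u$ only extends the play; consequently the $y$-digits emitted along $p(u)$ can only accumulate, never change, which is exactly what yields both totality of $f(x)$ and continuity of $f$.
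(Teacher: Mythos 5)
Your proposal is correct and follows essentially the same route as the paper: determinacy plus the contrapositive of the $\I$-direction of Theorem~\ref{theorem:unfolding} yields a winning strategy $\tau$ for $\II$ in the unfolded game, and the Lemma~\ref{lemma:rosendal} tree construction with coherent plays $p(u)$ gives the closed set of measure $>1-\epsilon$ and the continuous uniformization. The only cosmetic difference is that the paper first projects $\tau$ to a strategy $\tau'$ in the ordinary game $G(0,\cdot)$ by discarding the $y$-moves, so that Lemma~\ref{lemma:rosendal} applies verbatim, and then recovers the $y$-digits by evaluating $\tau$ on the plays $p(x\restriction k)$, whereas you rerun the construction inside the unfolded game directly and verify that the $\delta_\tau$ analysis still applies.
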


\begin{proof}
Since $\II$ trivially has a winning strategy in $G(0, \dom(R))$,
from Theorem~\ref{theorem:unfolding} and the determinacy of $G_{\text{unfolded}}(0,R)$ we have a winning strategy
$\tau$ for $\II$ in $G_{\text{unfolded}}(0,R)$. Let $\tau'$ be the strategy
for $\II$ in $G(0, \cdot)$ obtained from $\tau$ by ignoring the extra $y$ moves.
By Lemma~\ref{lemma:rosendal} there is a tree $T$ with $\mu([T])>1-\epsilon$ such that
all $x \in [T]$ is consistent with $\tau'$. Also, for each $u \in T$ there
is a canonical play $p=p(u)$ consistent with $\tau'$ which produces $u$.
For $x \in [T]$, define $y$ by $(x,y)= \bigcup_k \tau(p(x\restriction k))$. The map from $x \in [T]$ to $y$
is clearly continuous, and $R(x,y)$ holds for all $x \in [T]$ as $\tau$ is winning, and the theorem follows.

\end{proof}

\bibliographystyle{amsplain}

\bibliography{measure}

\end{document}